\numberwithin{equation}{section}
\newcommand{\bbR}{\mathbb{R}}
\newtheorem{theorem}{Theorem}[section]
\newtheorem*{theorem*}{Theorem}
\newtheorem{lemma}{Lemma}[section]
\newtheorem{definition}{Definition}[section]
\newtheorem{remark}{Remark}[section]
\newtheorem{proposition}{Proposition}[section]
\newtheorem{corollary}{Corollary}[section]
\numberwithin{equation}{section}
\begin{document}

\title{Complete monotonicity-preserving numerical methods for time fractional ODEs}

\author[1]{Lei Li\thanks{E-mail: leili2010@sjtu.edu.cn}}
\author[2]{Dongling Wang\thanks{E-mail: wdymath@nwu.edu.cn; Corresponding author.}}
\affil[1]{School of Mathematical Sciences, Institute of Natural Sciences, MOE-LSC, Shanghai Jiao Tong University, Shanghai, 200240, P. R. China.}
\affil[2]{Department of Mathematics and Center for Nonlinear Studies, Northwest University, Xi'an, Shaanxi, 710127, P. R. China.}

\date{}
\maketitle

\begin{abstract}
The time fractional ODEs are equivalent to convolutional Volterra integral equations with completely monotone kernels. We introduce the concept of complete monotonicity-preserving ($\mathcal{CM}$-preserving) numerical methods for fractional ODEs, in which the discrete convolutional kernels inherit the $\mathcal{CM}$ property as the continuous equations.  We prove that $\mathcal{CM}$-preserving schemes are at least $A(\pi/2)$ stable and can preserve the monotonicity of solutions to scalar nonlinear autonomous fractional ODEs, both of which are novel. Significantly, by improving a result of Li and Liu (Quart. Appl. Math., 76(1):189-198, 2018), we show that the $\mathcal{L}$1 scheme is $\mathcal{CM}$-preserving.
The good signs of the coefficients for such class of schemes ensure the discrete fractional comparison principles, and allow us to establish the convergence in a unified framework when applied to time fractional sub-diffusion equations and fractional ODEs.
 The main tools in the analysis are a characterization of convolution inverses for completely monotone sequences and a characterization of completely monotone sequences using Pick functions due to Liu and Pego (Trans. Amer. Math. Soc. 368(12): 8499-8518, 2016). The results for fractional ODEs are extended to $\mathcal{CM}$-preserving numerical methods for Volterra integral equations with general completely monotone kernels. Numerical examples are presented to illustrate the main theoretical results.  
\end{abstract}

\section{Introduction}\label{sec:Introd}
Fractional differential equations have received various applications in engineering and physics due to their nonlocal nature and their ability for modeling long tail memory effects \cite{brunner2017volterra, diethelm10,petravs2011fractional}. 
Compared to classical integer differential equations, time fractional differential equations, including fractional ODEs and PDEs, have two typical characteristics.  Firstly, the solutions of fractional equations usually have low regularity at the initial time \cite{brunner2017volterra, diethelm10,stynes2017error}. Secondly, the solutions of fractional equations usually have algebraic decay rate for dissipative problems which leads to the so-called long tail effect, while the solutions of classical integer equations usually have exponential decay for such problems \cite{wang2018long,vergara2015optimal,zeng2018new}. 
Because of the slow long time decay rate of the solutions of time fractional equations such that they are more advantageous than the integer order differential equations in describing many models with memory effects. 

These two features of time fractional order differential equations bring new challenges to their numerical solutions. The low regularity of the solutions at the initial time often leads to convergence order reduction in the numerical solutions.  Several technologies are developed to recover the high convergence order of numerical solutions, including adding starting weights \cite{lubich1986}, correction in initial steps \cite{yan2018analysis, jin2017correction} or non-uniform grid methods \cite{kopteva2019error,liao2019discrete,stynes2017error,li2019linearized}. For the numerical solutions that can accurately preserve the corresponding long term algebraic decay rate of the solutions of continuous equations, \cite{cuesta2007asymptotic} and \cite{wang2018long} made some first attempts for linear fractional PDEs and for nonlinear fractional ODEs respectively.

We consider the Caputo fractional ODE of order $\alpha\in(0, 1)$ for $t\mapsto u(t)\in \mathbb{R}^d$
\begin{gather}\label{eq1}
\mathcal{D}_c^{\alpha}u(t) =f(t, u(t)), \quad t>0,
\end{gather}
with initial value $u(0)=u_{0}$, where $\mathcal{D}_c^{\alpha}u(t):=\frac{1}{\Gamma(1-\alpha)}\int_{0}^{t} \frac{u'(s)}{(t-s)^{\alpha}}ds$ stands for the Caputo fractional derivatives and $f(\cdot, \cdot)$ is some given function. 
It is well known that under some suitable regularity assumptions the Caputo fractional ODE is equivalent to Volterra integral equation of the second class (see, for example, \cite[Lemma 2.3]{df02})
\begin{gather}\label{eq2}
u(t)=u_{0}+\mathcal{J}_{t}^{\alpha}f(\cdot, u(\cdot)):= u_{0}+\frac{1}{\Gamma(\alpha)}\int_{0}^{t} \frac{f(s, u(s))}{(t-s)^{1-\alpha}}ds,  \quad t>0.
\end{gather}
where 
\[
\mathcal{J}_{t}^{\alpha}g(t)=(k_{\alpha}*(\theta g)) (t)=\frac{1}{\Gamma(\alpha)}\int_{0}^{t} \frac{g(s)}{(t-s)^{1-\alpha}}ds ~\hbox{with~ kernel}~ k_{\alpha}(t)=\frac{t_+^{\alpha-1}} {\Gamma (\alpha)}
\]
 denotes the Riemann-Liouville fractional integral of order $\alpha$. Here, $\theta$ is the standard Heaviside function and $t_+=\theta(t)t$. 
In \cite{liliu2018}, a generalized definition of Caputo derivative based on convolution groups was proposed, and it has further been generalized in \cite{liliu2018compact} to weak Caputo derivatives for mappings into Banach spaces. The generalized definition, though appearing complicated, is theoretically more convenient, since it allows one to take advantage of the underlying group structure. In fact, making use of the convolutional group structure (see \cite{liliu2018} for more details), it is straightforward to convert a differential form like \eqref{eq1} into the Volterra integral like \eqref{eq2} even for $f$ to be distributions.
 
It is noted that the standard kernel function $k_{\alpha}(t)$ completely determines the basic properties of the Volterra integral equation (\ref{eq2}), so does the fractional ODE  (\ref{eq1}). Therefore, when we construct the numerical methods for equation (\ref{eq1}) or (\ref{eq2}), it's very natural and interesting to take into account some important properties of the kernel function. The standard kernel function $k_{\alpha}(t)$ represents a very important and typical class of completely monotonic ($\mathcal{CM}$) functions. Therefore, from the viewpoint of structure-preserving algorithms, it is quite natural to require the corresponding numerical methods can share this $\mathcal{CM}$ characteristic at the discrete level. This motives us to introduce the $\mathcal{CM}$-preserving numerical methods for Volterra integral equations (\ref{eq2}), in which the discrete kernel function in the corresponding numerical methods is a $\mathcal{CM}$ sequence. See the exact definition and some more explanations below in Section \ref{sec:Schemes}.

For a class of Volterra equations with completely monotonic convolution kernels, Xu in \cite{da2002uniform,da2008uniform} studied the time discretization method based on the backward Euler and convolution quadrature and established the stability and convergence in $L^{1}(0,\infty; H)\cap L^{\infty}(0,\infty; H)$ norm, where $H$ is a real Hilbert space. 
These nice works emphasize the qualitative characteristics of the solutions in the sense of average over the whole time region, which is quite different from the pointwise properties we will establish next. 

We now briefly review some basic notations for the $\mathcal{CM}$ functions and $\mathcal{CM}$ sequences 
and some related results which will be used in our later analysis, see the details in \cite{gripenberg1990volterra}. 
A function $g : (0, \infty)\to \mathbb{R}$ is called $\mathcal{CM}$ if it is of class $C^{\infty}$ and satisfies that 
\begin{gather}
(-1)^{n}g^{(n)}(t)\geq 0~ ~\hbox{for~ all}~ t>0, n=0,1,....
\end{gather}
The $\mathcal{CM}$ functions appear naturally in the models of relaxation and diffusion processes due to the fading memory principle and causality \cite{piero1997concepts}. 
In the linear viscoelasticity, a fundamental role is played by the interconversion relationships, which is modeled by a convolution quadrature with completely monotone kernels \cite{loy2014interconversion}. The $\mathcal{CM}$ functions also play a role in potential theory, probability theory and physics.
Very recently, the authors in \cite{bonaccorsi2012optimal} concerned with a class of stochastic Volterra integro-differential problem with completely monotone kernels, and use the approach to control a system whose dynamic is perturbed by the memory term.
We say a sequence $v=(v_0, v_1, \ldots)$ is $\mathcal{CM}$ if 
\begin{gather}\label{eq:CMsequencedef}
((I-E)^jv)_k\ge 0,~\text{ for any } j\ge 0, k\ge 0
\end{gather}
 where $(Ev)_j=v_{j+1}$. 
 A sequence is $\mathcal{CM}$ if and only if it is the moment sequence of a Hausdorff measure (a finite nonnegative measure on $[0,1]$) \cite{widder41}. 
 Another description we use heavily in this paper is that  a sequence is $\mathcal{CM}$ if and only if its generating function is a Pick function and analytic, nonnegative on $(-\infty, 1)$ (see Lemma \ref{lem:generating} below for more details).
 
In this paper, we first of all improve a result in \cite{li2018note} to show that the $\mathcal{L}$1 scheme (see Section \ref{sec:fourschemes} for more details) is $\mathcal{CM}$-preserving.
\begin{theorem*}[Informal version of Theorem \ref{thm:CMequivalent} and Proposition \ref{pro:L1cm}]
A sequence $a=(a_0, \cdots)$ with $a_0>0$ is $\mathcal{CM}$ if and only if its convolution inverse $\omega=a^{(-1)}$ satisfies that $\omega_0>0$, that the sequence $(-\omega_1, -\omega_2, \cdots)$
is $\mathcal{CM}$ and that $\omega_0+\sum_{j=1}^{\infty}\omega_j\ge 0$. Consequently, the $\mathcal{L}1$ scheme is  $\mathcal{CM}$-preserving.
\end{theorem*}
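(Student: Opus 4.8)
The statement splits into the algebraic characterization of $\mathcal{CM}$ sequences via their convolution inverses and its application to the $\mathcal{L}1$ weights; I would treat them in that order. For the characterization the whole argument runs on generating functions through Lemma~\ref{lem:generating}: writing $\tilde a(z)=\sum_{j\ge0}a_jz^j$ and $\tilde\omega(z)=\sum_{j\ge0}\omega_jz^j$, the identity $a*\omega=\delta$ is exactly $\tilde\omega=1/\tilde a$, and a real sequence is $\mathcal{CM}$ iff its generating function is Pick, analytic on $\mathbb{C}\setminus[1,\infty)$ and nonnegative on $(-\infty,1)$. Assume first $a$ is $\mathcal{CM}$. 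Then $\tilde a$ has these three properties; in particular $\tilde a$ is real-analytic and nonnegative on $(-\infty,1)$, hence nondecreasing there (a Pick function is nondecreasing on every real interval on which it is analytic and real-valued), so $\tilde a\ge a_0>0$ on $[0,1)$. This immediately gives $\omega_0=1/a_0>0$, the analyticity of $\tilde\omega=1/\tilde a$ on $\mathbb{C}\setminus[1,\infty)$, and (letting $x\to1^-$) $\tilde\omega(1^-)=1/\tilde a(1^-)\in[0,1/a_0]$. The generating function of $(-\omega_1,-\omega_2,\dots)$ is $G(z):=(\omega_0-\tilde\omega(z))/z$; it is analytic on $\mathbb{C}\setminus[1,\infty)$ (removable singularity at $0$, since $\omega_0-\tilde\omega(0)=0$), it is nonnegative on $(-\infty,1)$ by an elementary sign check (on $(0,1)$ numerator and denominator are $\ge0$; on $(-\infty,0)$ both are $\le0$, using $\tilde\omega=1/\tilde a$ and monotonicity of $\tilde a$), and its Pick property is the one delicate point, isolated below. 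Granting it, Lemma~\ref{lem:generating} shows $(-\omega_1,-\omega_2,\dots)$ is $\mathcal{CM}$; it is then a nonnegative sequence, and $\sum_{j\ge1}\omega_j=-\lim_{x\to1^-}G(x)=\tilde\omega(1^-)-\omega_0$, so $\omega_0+\sum_{j\ge1}\omega_j=\tilde\omega(1^-)\ge0$, completing the forward direction.

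The Pick property of $G$ I would get from the sublemma: \emph{if $H$ is Pick, analytic on $\mathbb{C}\setminus[1,\infty)$, real on $(-\infty,1)$ and $H(0)=0$, then $H(z)/z$ has the same properties.} Indeed, analyticity and reality across $(-\infty,1)$ force the Nevanlinna representing measure $\mu$ of $H$ to live on $[1,\infty)$, and imposing $H(0)=0$ collapses the representation to $H(z)=\beta z+\int_{[1,\infty)}\frac{z}{t(t-z)}\,d\mu(t)$ with $\beta\ge0$, whence $H(z)/z=\beta+\int_{[1,\infty)}\frac{t^{-1}\,d\mu(t)}{t-z}$ is manifestly Pick and analytic on $\mathbb{C}\setminus[1,\infty)$. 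Applying this to $H=\omega_0-\tilde\omega=-\sum_{j\ge1}\omega_jz^j$ — which is Pick, being $\omega_0$ plus the Pick function $-1/\tilde a$ — yields $G=H/z$ Pick. For the converse, run the representation backwards: let $G(z)=\gamma+\int_{[1,\infty)}\frac{d\nu(t)}{t-z}$ be the generating function of the $\mathcal{CM}$ sequence $(-\omega_1,-\omega_2,\dots)$ (with $\gamma\ge0$, $\nu\ge0$, $\int t^{-1}d\nu<\infty$). Then $zG(z)$ is Pick, since each $z\mapsto z/(t-z)=-1+t/(t-z)$ is Pick and $\int z/(t-z)\,d\nu(t)$ converges locally uniformly because $|z/(t-z)|\le Ct^{-1}$ for large $t$. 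Hence $-\tilde\omega(z)=zG(z)-\omega_0$ is Pick and analytic on $\mathbb{C}\setminus[1,\infty)$ (the degenerate case $\tilde\omega\equiv\omega_0$ is trivial), so $\tilde a=-1/(-\tilde\omega)$ is Pick and analytic wherever $\tilde\omega\ne0$. The only obstruction is a zero of $\tilde\omega$ inside $[0,1)$, and ruling it out is the main obstacle of this part and the place where the third hypothesis is essential: $-\tilde\omega$ is Pick and real-analytic on $(-\infty,1)$, hence nondecreasing, so $\tilde\omega$ is nonincreasing on $[0,1)$ with $\tilde\omega(x)\ge\tilde\omega(1^-)=\omega_0+\sum_{j\ge1}\omega_j\ge0$; a zero at some $x_0\in[0,1)$ would force $\tilde\omega\equiv0$ on $[x_0,1)$ by monotonicity plus nonnegativity, contradicting $\tilde\omega(0)=\omega_0>0$ via analytic continuation. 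Thus $\tilde\omega>0$ on $[0,1)$ and $\tilde\omega\ge\omega_0>0$ on $(-\infty,0)$, so $\tilde a$ is Pick, analytic on $\mathbb{C}\setminus[1,\infty)$, positive on $(-\infty,1)$, and $a$ is $\mathcal{CM}$ by Lemma~\ref{lem:generating}.

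For the $\mathcal{L}1$ assertion, recall from Section~\ref{sec:fourschemes} that the scheme corresponds to the discrete convolution weights $\omega$ with $\omega_0=1$ and $\omega_k=(k+1)^{1-\alpha}-2k^{1-\alpha}+(k-1)^{1-\alpha}$ for $k\ge1$, its discrete fractional-integral kernel being $a=\omega^{(-1)}$ (see \cite{liliu2018}); by the characterization it thus suffices to check the three conditions on $\omega$. Two are immediate: $\omega_0=1>0$, and telescoping gives $\sum_{j\ge1}\omega_j=\lim_{N\to\infty}\bigl((N+1)^{1-\alpha}-N^{1-\alpha}\bigr)-1=-1$, so $\omega_0+\sum_{j\ge1}\omega_j=0\ge0$. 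The remaining condition, that $(-\omega_1,-\omega_2,\dots)$ is $\mathcal{CM}$, is precisely the improvement over \cite{li2018note}, and I expect it to be the main point to carry out. I would obtain it from the subordination identity $x^{1-\alpha}=\tfrac{1-\alpha}{\Gamma(\alpha)}\int_0^\infty(1-e^{-xs})s^{\alpha-2}\,ds$ ($x>0$), which, after using $e^{s}+e^{-s}-2=4\sinh^2(s/2)$, gives
\[
-\omega_k \;=\; \frac{4(1-\alpha)}{\Gamma(\alpha)}\int_0^\infty \sinh^2(s/2)\,s^{\alpha-2}\,e^{-ks}\,ds,\qquad k\ge1 .
\]
Shifting the index, $-\omega_{k+1}=\int_0^\infty e^{-ks}\,d\nu(s)$ with $d\nu(s)=\tfrac{4(1-\alpha)}{\Gamma(\alpha)}\sinh^2(s/2)\,s^{\alpha-2}e^{-s}\,ds\ge0$, and $\nu$ has finite total mass because the integrand is $O(s^{\alpha})$ near $0$ and $O(s^{\alpha-2})$ near $\infty$, both integrable for $0<\alpha<1$. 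Substituting $r=e^{-s}$ exhibits $(-\omega_{k+1})_{k\ge0}$ as the moment sequence of a finite measure on $[0,1)$, hence $\mathcal{CM}$ by \cite{widder41}. Feeding the three verified conditions into the characterization gives that $a=\omega^{(-1)}$ is $\mathcal{CM}$, i.e.\ the $\mathcal{L}1$ scheme is $\mathcal{CM}$-preserving.
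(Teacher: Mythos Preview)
Your argument is correct, but it follows a genuinely different path from the paper in all three parts.

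For the characterization, the paper does not reprove the forward direction (it cites \cite{li2018note}) and handles the converse by an $\epsilon$-regularization: it considers the auxiliary function
\[
H_\epsilon(z)=\frac{1}{\epsilon}+\frac{z}{\epsilon+\omega_0-z(\epsilon+G(z))},
\]
shows this is Pick and nonnegative on $(-\infty,1)$ by a direct imaginary-part computation, deduces that the shifted sequence $(1/\epsilon,a_0(\epsilon),a_1(\epsilon),\dots)$ is $\mathcal{CM}$, drops the first term, and sends $\epsilon\to0^+$. The role of the hypothesis $\omega_0+\sum_{j\ge1}\omega_j\ge0$ is to keep $\epsilon+\omega_0-z(\epsilon+G(z))>0$ on $(0,1)$. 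Your route instead passes through the Nevanlinna/Hausdorff integral representation of $G$ and the sublemma that $H(z)/z$ is Pick when $H$ is and $H(0)=0$; the third hypothesis enters for you as the obstruction to a zero of $\tilde\omega$ on $[0,1)$ via monotonicity. Both arguments are clean; the paper's avoids invoking the integral representation and any growth/normalization issues for the representing measure, while yours makes the structural reason (Pick functions are closed under $z\mapsto zG(z)$ and $f\mapsto -1/f$) more transparent and gives the forward direction as a bonus.

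For the $\mathcal{L}1$ application, the paper's proof is shorter and more conceptual: it observes that the auxiliary sequence $b_j=\int_{t_j}^{t_{j+1}}\frac{t^{-\alpha}}{\Gamma(1-\alpha)}\,dt$ is $\mathcal{CM}$ (mesh integrals of a $\mathcal{CM}$ function), and then $(-\omega_1,-\omega_2,\dots)=(I-E)b$ is $\mathcal{CM}$ by the very definition \eqref{eq:CMsequencedef}. Your explicit Laplace/Bernstein computation
\[
-\omega_k=\frac{4(1-\alpha)}{\Gamma(\alpha)}\int_0^\infty \sinh^2(s/2)\,s^{\alpha-2}e^{-ks}\,ds
\]
is correct and gives the Hausdorff moment measure explicitly, which is informative, but the paper's one-line $(I-E)b$ argument is the intended shortcut. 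A small cosmetic point: you normalize to $\omega_0=1$, whereas the paper keeps the factor $1/\Gamma(2-\alpha)$; this is harmless for the $\mathcal{CM}$ conclusion but worth stating.
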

Of course, there are many other $\mathcal{CM}$-preserving schemes as we will discuss later. This result also tells us that the $\mathcal{CM}$-preserving schemes have nice sign properties for the coefficients: all $a_j$ for $j\geq 0$ are nonnegative and all $\omega_j$ for $j\ge 1$ are nonpositive. These allow us to establish some comparison principles and good stability properties of the schemes (see Section \ref{sec:generalproperties} for more details). In fact, by a deep  characterisation of $\mathcal{CM}$ sequences using Pick functions in \cite{lp16}, we can show a much better result: all $\mathcal{CM}$-preserving schemes are at least $A(\pi/2)$ stable.
\begin{theorem*}[Informal version of Theorem \ref{thm:stabilityregion} and Corollary \ref{cor:stability}]
Consider a $\mathcal{CM}$-preserving scheme for \eqref{eq1}. The complement of the numerical stability region is a bounded set in the right half complex plane. The stability region contains the left half plane excluding $\{0\}$, and also 
the small wedge region conducts vertex at $\{0\}$ with asymptotic angle $\pm \alpha\pi/2$. Consequently, for $\mathcal{D}_c^{\alpha}u=\lambda u$,
 the $\mathcal{CM}$-preserving schemes are unconditionally stable when $|\arg(\lambda)|\ge \pi/2$, while stable for $h$ small enough when $|\arg(\lambda)|>\frac{\pi \alpha}{2}$.
\end{theorem*}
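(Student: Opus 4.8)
The plan is to pass to generating functions and reduce the whole analysis to the location, inside the closed unit disc $\overline{\mathbb D}$, of the roots $\xi$ of $\hat\omega(\xi)=z$, where $z=\lambda h^\alpha$, $\hat a(\xi)=\sum_{n\ge0}a_n\xi^n$ is the generating function of the $\mathcal{CM}$ discrete convolution kernel, and $\hat\omega(\xi)=1/\hat a(\xi)$ is that of its convolution inverse. Applying a $\mathcal{CM}$-preserving scheme to $\mathcal D_c^{\alpha}u=\lambda u$ and using the Volterra form \eqref{eq2}, the numerical solution has generating function
\[
\hat u(\xi)=\frac{u_0\bigl(\hat\omega(\xi)-z(1-\xi)\bigr)}{(1-\xi)\bigl(\hat\omega(\xi)-z\bigr)} ,
\]
so that, apart from the branch point at $\xi=1$ — which produces exactly the slowly decaying tail $u_n=O(n^{-\alpha})$ mirroring the true long-time decay of the Mittag-Leffler function, and not instability — the poles of $\hat u$ in $\overline{\mathbb D}$ are precisely the roots of $\hat\omega(\xi)=z$; a root in the open disc makes $u_n$ grow geometrically. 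Thus, up to boundary effects that the estimates below confine to the right half-plane, the numerical stability region is the complement of $\hat\omega(\overline{\mathbb D})$.

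I would then pin down $\hat\omega(\overline{\mathbb D})$ using the $\mathcal{CM}$ structure. By Lemma~\ref{lem:generating}, a $\mathcal{CM}$ sequence $a$ with $a_0>0$ satisfies $\hat a(\xi)=\int_0^1\frac{d\mu(s)}{1-s\xi}$ for a finite nonnegative measure $\mu$ on $[0,1]$ with $\mu([0,1])=a_0$. For every $s\in[0,1]$ the Möbius map $\xi\mapsto\frac1{1-s\xi}$ sends $\mathbb D$ into the half-plane $\{\operatorname{Re}w>\tfrac12\}$ (it is the image of the disc $|w-1|<1$), so integrating gives $\operatorname{Re}\hat a(\xi)\ge\tfrac12 a_0>0$ on $\overline{\mathbb D}\setminus\{1\}$, while $\hat a(\xi)\to\infty$ as $\xi\to1$ since for $\alpha\in(0,1)$ consistency forces $\sum_n a_n=\infty$. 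Hence $\hat\omega=1/\hat a$ maps $\overline{\mathbb D}$ into the closed disc $D=\{\zeta:|\zeta-1/a_0|\le 1/a_0\}$, with $\hat\omega(1)=0$. Consequently the complement of the stability region is contained in $\hat\omega(\overline{\mathbb D})\subseteq D$: bounded and contained in the closed right half-plane, meeting the imaginary axis only at $0$. In particular the stability region contains $\mathbb C\setminus D$, hence the entire left half-plane minus $\{0\}$, so the scheme is at least $A(\pi/2)$ stable; and for $z=\lambda h^\alpha$ with $|\arg\lambda|\ge\pi/2$ one has $z$ in the closed left half-plane and $z\ne0$ for every $h>0$, which is unconditional stability.

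For the wedge at the origin I would zoom in near $\xi=1$, the only point of $\overline{\mathbb D}$ where $\hat a$ is unbounded. Consistency of a $\mathcal{CM}$-preserving scheme for \eqref{eq1} forces $a_n\sim n^{\alpha-1}/\Gamma(\alpha)$, and since $a$ is $\mathcal{CM}$ its generating function is analytic on a slit neighbourhood of $\xi=1$; combining this with the Liu--Pego Pick-function characterisation \cite{lp16} upgrades the Abelian asymptotics to the uniform local expansion $\hat\omega(\xi)=1/\hat a(\xi)\sim(1-\xi)^{\alpha}$ as $\xi\to1$ in $\overline{\mathbb D}$. Solving $\hat\omega(\xi)=z$ for small $z$ then yields a single relevant root $\xi_*(z)=1-z^{1/\alpha}\bigl(1+o(1)\bigr)$ (principal branch), so that $|\xi_*|^2=1-2\operatorname{Re}(z^{1/\alpha})+o(|z|^{1/\alpha})$ and therefore $|\xi_*|<1\iff\operatorname{Re}(z^{1/\alpha})>0\iff|\arg z|<\alpha\pi/2$. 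Hence for $|\arg z|>\alpha\pi/2$ and $|z|$ small enough the root $\xi_*$ leaves the open disc, while for $|\xi-1|\ge\delta$ the set $\hat\omega\bigl(\overline{\mathbb D}\cap\{|\xi-1|\ge\delta\}\bigr)$ is a compact subset of $D\setminus\{0\}$ avoided by such small $z$; so $\hat\omega(\xi)\ne z$ on $\overline{\mathbb D}$ and $z$ is in the stability region, which gives — taking $z=\lambda h^\alpha$, $|\arg\lambda|>\alpha\pi/2$, $h\to0$ — stability for $h$ small. I expect the main obstacle to be this last step made rigorous: obtaining the expansion of $\hat\omega$ uniformly on a slit (Stolz-type) neighbourhood of $\xi=1$, which is where the Liu--Pego representation is indispensable, and then carrying out an argument-principle/Rouché count of the roots of $\hat\omega(\xi)-z$ in a small disc about $1$ with all error terms controlled uniformly as $z\to0$; everything else reduces to the elementary half-plane bound $\operatorname{Re}\hat a\ge a_0/2$.
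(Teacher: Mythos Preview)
Your approach is essentially the same as the paper's: both reduce to studying the image $F_\omega(\overline{D(0,1)})$ via the Hausdorff-measure/Pick representation of the $\mathcal{CM}$ sequence $a$, and both handle the wedge at the origin by the local asymptotic $F_\omega(\xi)\sim(1-\xi)^\alpha$ near $\xi=1$ combined with a compactness argument away from $\xi=1$. The paper formalises the link between ``no root of $F_\omega(\xi)=\zeta$ in $\overline{\mathbb D}$'' and stability via Lemma~\ref{lem:complement} and the singularity-analysis Lemma~\ref{lem:generating}(1), which is exactly the step you flag as the ``main obstacle''.

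There is one genuine difference worth highlighting. For the $A(\pi/2)$ part, the paper splits into a neighbourhood of $\xi=1$ (where $|\arg F_a|\le \alpha\pi/2+o(1)$) and its complement in $\overline{\mathbb D}$ (where it only extracts $\operatorname{Re}F_a>0$, hence $|\arg F_\omega|<\pi/2$), and then patches the two bounds to get some $\theta_0<\pi/2$. Your observation that each M\"obius map $\xi\mapsto (1-s\xi)^{-1}$ sends $\overline{\mathbb D}$ into $\{\operatorname{Re}w\ge\tfrac12\}$, so that $\operatorname{Re}\hat a(\xi)\ge a_0/2$ uniformly on $\overline{\mathbb D}\setminus\{1\}$, is sharper: it gives in one stroke the explicit inclusion $F_\omega(\overline{\mathbb D})\subset\{\,|\zeta-1/a_0|\le 1/a_0\,\}$, from which boundedness of $\mathcal S_h^c$ and containment of the whole left half-plane (minus $\{0\}$) follow immediately, without any $\epsilon$-splitting. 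This is a cleaner and more informative argument than the paper's for that part of the theorem; the paper's route, on the other hand, is slightly more careful in spelling out the analytic-continuation/singularity framework (the domain $\Delta_{R,\theta}$) that turns ``no root in $\overline{\mathbb D}$'' into the decay $u_n\to 0$.
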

Note that the brach cut of the $\arg(\cdot)$ function in this paper is taken to be the negative real axis and thus the range is $(-\pi, \pi]$. It is a curious question whether the numerical solutions are monotone. The monotonicity of numerical solutions is very important for proving stability of some fractional PDEs using discretized sequence to approximate. In fact, for autonomous scalar ODEs, we are able to show this.
\begin{theorem*}[Informal version of Theorem \ref{thm:generalmonotone}]
Consider applying $\mathcal{CM}$-preserving schemes to fractional ODEs $\mathcal{D}_c^{\alpha}u=f(u)$ for $f: \bbR\to \bbR$. If $f(\cdot)$ is $C^1$ and non-increasing, or  $f(\cdot)$ is $C^1$ with $M:=\sup |f'(u)|<\infty$, then for suitably chosen $h_0$, when $h\le h_0$, $\{u_n\}$ is monotone.
\end{theorem*}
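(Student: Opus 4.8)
The plan is to reduce monotonicity to a statement about the signs of the first differences $e_n:=u_{n+1}-u_n$, and then to show by induction that all $e_n$ share one sign. In the notation of Section~\ref{sec:Schemes}, write the scheme in convolution-inverse (``discrete Caputo'') form
\[
\sum_{j=1}^{n}\omega_{n-j}\,(u_j-u_0)=h^{\alpha}f(u_n),\qquad n\ge1,
\]
where $\omega$ is the convolution inverse of the $\mathcal{CM}$ discrete kernel; by Theorem~\ref{thm:CMequivalent} and Proposition~\ref{pro:L1cm} one has $\omega_0>0$ and $\omega_j\le0$ for all $j\ge1$. In the non-increasing case put $h_0:=+\infty$; in the Lipschitz case put $M:=\sup_u|f'(u)|<\infty$ and $h_0:=(\omega_0/M)^{1/\alpha}$. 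First I would fix $h<h_0$ and check well-posedness: $x\mapsto \omega_0 x-h^{\alpha}f(x)$ has derivative $\omega_0-h^{\alpha}f'(x)$, which is $\ge\omega_0>0$ when $f'\le0$ and $\ge\omega_0-h^{\alpha}M>0$ in the Lipschitz case; in either case the map is an increasing bijection of $\bbR$, so each implicit step has a unique solution and $u_n$ is well defined. The same strict monotonicity, applied to $\psi(x):=\omega_0(x-u_0)-h^{\alpha}f(x)$ and combined with $\psi(u_0)=-h^{\alpha}f(u_0)$ and the $n=1$ equation $\psi(u_1)=0$, yields $\operatorname{sign}(e_0)=\operatorname{sign}(f(u_0))$; if $f(u_0)=0$ then $u_1=u_0$ (and, by the recursion below, $u_n\equiv u_0$), and replacing $(u_n,f(\cdot))$ by $(-u_n,-f(-\cdot))$ if necessary we may assume $f(u_0)>0$, so $e_0>0$.

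The key step will be to derive a recursion for $e_n$ with the correct sign pattern. Here I would use that the shifted sequence $w_i:=u_{i+1}$ solves the \emph{same} scheme but with initial value $u_1$ and an extra forcing term: writing $u_{m+1-k}-u_0=(w_{m-k}-w_0)+e_0$ and $\bar\omega_m:=\sum_{k=0}^{m}\omega_k$ gives $\sum_{k=0}^{m}\omega_k(w_{m-k}-w_0)=h^{\alpha}f(w_m)-e_0\bar\omega_m$. Subtracting the scheme for $u$ at level $m$, using $w_i-u_i=e_i$, the mean value theorem $f(u_{m+1})-f(u_m)=f'(\eta_m)e_m$, and the telescoping identity $\bar\omega_m-\bar\omega_{m-1}=\omega_m$, the $\bar\omega$-terms cancel and one is left with
\begin{equation}\label{eq:monrecsketch}
\bigl(\omega_0-h^{\alpha}f'(\eta_m)\bigr)\,e_m \;=\; -\,\omega_m\,e_0 \;-\;\sum_{k=1}^{m-1}\omega_k\,e_{m-k},\qquad m\ge1 .
\end{equation}
The induction then runs from the base case $e_0\ge0$: if $e_0,\dots,e_{m-1}\ge0$, the right-hand side of \eqref{eq:monrecsketch} is a nonnegative combination because $-\omega_m\ge0$ and $-\omega_k\ge0$ for $1\le k\le m-1$ --- which is exactly where the sign information supplied by the $\mathcal{CM}$-preserving property is used --- while the coefficient on the left is $\ge\omega_0>0$ (resp. $\ge\omega_0-h^{\alpha}M>0$ in the Lipschitz case). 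Hence $e_m\ge0$, so $\{u_n\}$ is non-decreasing; symmetrically it is non-increasing when $f(u_0)<0$ and constant when $f(u_0)=0$, so in all cases $\{u_n\}$ is monotone.

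I expect the only real obstacle to be the bookkeeping behind \eqref{eq:monrecsketch}: seeing that the index shift $u\mapsto u_{\cdot+1}$ produces precisely the forcing $-e_0\bar\omega_m$ and that subtracting the two schemes collapses it to $-e_0\omega_m$ via $\bar\omega_m-\bar\omega_{m-1}=\omega_m$, and checking that the single threshold $h_0$ simultaneously secures solvability of the implicit step, the sign of $e_0$, and positivity of $\omega_0-h^{\alpha}f'(\eta_m)$. Note that the argument uses only the nonpositivity of the tail $(\omega_j)_{j\ge1}$, not the full complete monotonicity of $(-\omega_1,-\omega_2,\dots)$; and that in the non-increasing case no restriction on $h$ is needed.
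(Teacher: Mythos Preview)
Your argument is correct and, in fact, considerably more economical than the paper's. Both proofs study the first differences $e_n=u_{n+1}-u_n$ and show by induction that they keep a fixed sign, but the mechanisms are different. The paper works on the integral side: it writes the scheme as $u_n=u_0+h^{\alpha}\sum_{j=0}^{n-1}a_j f_{n-j}$, derives an equation for $v_n=e_n$ involving the $\mathcal{CM}$ kernel $a$, and then introduces a discrete resolvent $b$ solving $b+\lambda(a*b)=\lambda a$ (Lemma~\ref{lem:resolvent}); proving $b_j\ge 0$ requires the Pick-function characterisation of $\mathcal{CM}$ sequences, and the parameter $\lambda$ must then be pushed to infinity to force $1+h^{\alpha}g_{n-j}/\lambda>0$ in the resulting identity. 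Your route stays on the derivative side: subtracting the scheme at consecutive levels collapses directly to
\[
\bigl(\omega_0-h^{\alpha}f'(\eta_m)\bigr)e_m=-\omega_m e_0-\sum_{k=1}^{m-1}\omega_k\,e_{m-k},
\]
and the induction closes using only $\omega_0>0$ and $\omega_j\le 0$ for $j\ge 1$ from Theorem~\ref{thm:CMequivalent}. (A small remark on your bookkeeping: the $\bar\omega_m$ terms actually cancel identically on the two sides after subtraction, without needing the telescoping $\bar\omega_m-\bar\omega_{m-1}=\omega_m$; the recursion you state is nonetheless correct.) Your threshold $h_0=(\omega_0/M)^{1/\alpha}$ is exactly the paper's condition $h^{\alpha}Ma_0<1$, since $\omega_0=a_0^{-1}$.

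What each approach buys: the paper's resolvent argument mirrors the continuous proof in \cite{feng2018continuous} and showcases the role of the full $\mathcal{CM}$ structure (needed to get $b_j\ge 0$), which is thematically consistent with the rest of the paper. Your argument is shorter, avoids Lemma~\ref{lem:resolvent} and the auxiliary parameter $\lambda$ entirely, and — as you correctly observe — reveals that only the sign pattern of $(\omega_j)_{j\ge 1}$, not the complete monotonicity of $(-\omega_1,-\omega_2,\dots)$, is actually needed for this theorem.
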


By the the good signs of the the sequence $a$ and $\omega$, we are able to establish the convergence of the numerical solutions to fractional ODEs for $\mathcal{CM}$-preserving schemes in a unified framework.
\begin{theorem*}[Informal version of Theorem \ref{thm:convergece}]
Consider applying $\mathcal{CM}$-preserving schemes to frational ODEs
$\mathcal{D}_c^{\alpha}u=f(t, u)$, where $u: [0, T]\to \mathbb{R}^d$. If $f(t, \cdot)$ satisfies $(x-y)\cdot (f(t,x)-f(t, y))\le 0$ or is Lipschitz continuous,
then,
\begin{gather}
\lim_{h\to 0}\sup_{n: nh\le T}\|u(t_n)-u_n\|=0.
\end{gather}
\end{theorem*}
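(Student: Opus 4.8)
The plan is to prove convergence by establishing a discrete analogue of Gronwall's inequality built on the good sign structure of the $\mathcal{CM}$-preserving scheme. First I would write out the numerical scheme for \eqref{eq2} in convolution form: the scheme produces $u_n$ satisfying $u_n = u_0 + h^\alpha (a * f(\cdot, u))_n$ (up to the precise indexing conventions fixed in Section~\ref{sec:Schemes}), where $a = (a_0, a_1, \ldots)$ is the $\mathcal{CM}$ discrete kernel, and equivalently $\omega * (u - u_0) = h^\alpha f$ where $\omega = a^{(-1)}$ is the convolution inverse. The exact solution satisfies the same identity with a consistency defect $r_n$ (the local truncation error in the integral formulation), and the key structural input — from the first informal theorem in the excerpt — is that $a_j \ge 0$ for all $j$, $\omega_0 > 0$, $\omega_j \le 0$ for $j \ge 1$, and $\sum_{j\ge 0}\omega_j \ge 0$.

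Next I would set up the error equation. Let $e_n = u(t_n) - u_n$. Subtracting the two convolution identities gives $e_n = h^\alpha \big(a * [f(\cdot,u) - f(\cdot, u_n)]\big)_n + \rho_n$, where $\rho_n = (a * r)_n$ collects the accumulated consistency errors. In the monotone/one-sided Lipschitz case $(x-y)\cdot(f(t,x)-f(t,y))\le 0$ one takes the inner product of the error equation with $e_n$ and exploits positivity of the kernel $a$ together with a discrete fractional Grönwall / comparison argument, mirroring the continuous energy estimate; the sign conditions on $\omega$ are exactly what make the discrete fractional derivative $\omega * e$ behave monotonically, so that $\inner{e_n}{(\omega * e)_n}$ dominates a multiple of $|e_n|^2$ minus boundary contributions (a discrete analogue of the fractional chain-rule inequality, as in Alikhanov-type estimates, but here it follows for free from $\mathcal{CM}$-preservation and the discrete comparison principles of Section~\ref{sec:generalproperties}). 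In the Lipschitz case one instead estimates $\|e_n\| \le h^\alpha L \sum_{j} a_{n-j} \|e_j\| + \|\rho_n\|$ and applies the discrete fractional Grönwall inequality, using that $\sum_{j\le n} a_j \le C t_n^\alpha$ (a consequence of the kernel being $\mathcal{CM}$ with the correct scaling, so its generating function behaves like the symbol of $\mathcal{J}^\alpha$ near $z=1$) to close the bound as $\|e_n\| \le C\, E_\alpha(C t_n^\alpha)\,\max_j \|\rho_j\|$.

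The remaining ingredient is consistency: $\sup_{n:\,nh\le T}\|\rho_n\| \to 0$ as $h\to 0$. Here one cannot assume more than $C^1$-type regularity of $f$ and merely continuity of $u$ (the statement claims only $\lim_{h\to0}$, not a rate), so I would avoid Taylor expansion and instead argue that the scheme is consistent in the weak sense: the discrete kernel $h^\alpha a$, viewed as a measure, converges to $k_\alpha$ in an appropriate sense, which again follows from the Pick-function / generating-function characterization of $\mathcal{CM}$ sequences in \cite{lp16} together with the first-order consistency built into the construction of the scheme. Combined with uniform boundedness of $\{u_n\}$ (from the stability results, Corollary~\ref{cor:stability}) and uniform continuity of $f$ and $u$ on compact sets, this gives $\|\rho_n\| = o(1)$ uniformly in $n$ with $nh \le T$, completing the proof.

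I expect the main obstacle to be the consistency/convergence of the discrete kernel at low regularity: turning the abstract $\mathcal{CM}$-preservation property into a quantitative statement that $h^\alpha(a*g)_n \to \mathcal{J}^\alpha g(t_n)$ uniformly for all schemes in the class simultaneously — rather than verifying it scheme by scheme — requires care, and is precisely where the generating-function estimates from \cite{lp16} and Lemma~\ref{lem:generating} do the heavy lifting. The Grönwall step itself is routine once the sign structure is in hand, and the one-sided Lipschitz case is handled by the same machinery with the energy inequality replacing the triangle inequality.
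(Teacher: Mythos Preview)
Your overall architecture---error equation, discrete energy/comparison inequality from the sign structure of $\omega$, then a Gr\"onwall/comparison step driven by the accumulated consistency defect---matches the paper's proof of Theorem~\ref{thm:convergece} closely. In particular, the paper also takes the inner product with $e_n$ and uses Proposition~\ref{pro:energy} to obtain $\mathcal{D}_h^\alpha\|e_n\|\le \eta\|e_n\|+\|r_n\|$ with $\eta\in\{0,L\}$, then inverts to $\|e_n\|\le \eta\, h^\alpha\sum_j a_j\|e_{n-j}\|+h^\alpha\sum_j a_j\|r_{n-j}\|$ and closes with the comparison principle (Proposition~\ref{pro:comppf}) against an explicit supersolution $v$ with $\mathcal{D}_c^\alpha v=L$, which is effectively the $E_\alpha$-type Gr\"onwall bound you propose.

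The substantive divergence is in the consistency step, and here you should be aware of a trap. The paper does \emph{not} show that $\sup_n\|r_n\|\to 0$; in fact it exhibits (just above Theorem~\ref{thm:consistency}) that for the leading singular part $t^{\alpha}$ the local truncation error $r_n^{(1)}$ is independent of $h$ and merely satisfies $r_n^{(1)}\to 0$ as $n\to\infty$. So any argument that bounds the accumulated error by $\max_j\|r_j\|$ at the differential level fails. The paper handles this by assuming the solution has the form~\eqref{eq:solutionform}, decomposing $r_n=r_n^{(1)}+r_n^{(2)}$ via Theorem~\ref{thm:consistency}, and then, inside the proof of Theorem~\ref{thm:convergece}, splitting the convolution $h^\alpha\sum_j a_j\|r_{n-j}^{(1)}\|$ at a fixed index $N_1$ (chosen so that $\|r_k^{(1)}\|\le\epsilon$ for $k\ge N_1$) to absorb the non-uniform piece. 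Your alternative---bypassing $r_n$ entirely and showing directly that the integral-form defect $\rho_n=h^\alpha(a*f(\cdot,u(\cdot)))_n-\mathcal{J}^\alpha f(\cdot,u(\cdot))(t_n)$ vanishes uniformly via weak convergence of the discrete kernel to $k_\alpha$---is viable (this is essentially the consistency notion in Definition~\ref{eq:consistentvol}, cf.\ Remark after Theorem~\ref{thm:convforvol}), but it is not automatic from Lemma~\ref{lem:generating} alone: you need the pointwise asymptotics $a_j\sim j^{\alpha-1}/\Gamma(\alpha)$ from Proposition~\ref{pro:propertyomega} together with uniform continuity of $t\mapsto f(t,u(t))$ on $[0,T]$, and a separate argument near $t=0$ where the kernel is singular. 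The paper's route trades generality for concreteness by pinning down the solution structure; yours is cleaner in principle but the ``heavy lifting'' you defer is exactly where the paper spends its effort.
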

We also apply similar techniques to Volterra convolutional integral equations and obtain similar results, which we do not list here.

The rest of this paper is organized as follows. In Section \ref{sec:Schemes}, 
we first provide the motivations for $\mathcal{CM}$-preserving numerical schemes for fractional ODEs and then give the exact definition. 
In Subsection \ref{sec:generalproperties}, we show that the condition for the inverse of a $\mathcal{CM}$ sequence in \cite{li2018note} is in fact both necessary and sufficient. Some favorable properties such as discrete fractional comparison principles for $\mathcal{CM}$-preserving numerical schemes are derived. Four concrete numerical schemes, including the Gr\"{u}nwald-Letnikov formula, numerical method based on piecewise interpolation, convolutional quadrature based on  $\theta$-method and the $\mathcal{L}$1 scheme are shown to be $\mathcal{CM}$-preserving for fractional ODEs in Subsection \ref{sec:fourschemes}.
In Section \ref{sec:stability}, we study the stability region for general $\mathcal{CM}$-preserving schemes and prove 
they are $A(\pi/2)$-stable. The new results allow us to apply $\mathcal{CM}$-preserving schemes to linear systems where the eigenvalues may have non-zero imaginary parts but still maintain numerical stability.
The monotonicity of numerical solutions obtained by $\mathcal{CM}$-preserving numerical methods for scalar nonlinear autonomous fractional ODEs is proved in Section \ref{sec:scalar}, which is fully consistent with the continuous equations.  In Section \ref{sec:subdiffusion}, we first derive the local truncation error and convergence of $\mathcal{CM}$-preserving schemes for fractional ODEs. Then we apply $\mathcal{CM}$-preserving schemes to time fractional sub-diffusion equations, in which we are able to establish the convergence of the numerical methods in time direction in a unified framework due to the nice sign properties of the $\mathcal{CM}$-preserving schemes. This new class of numerical methods for fractional ODEs are directly extended to convolutional Volterra integral equations involving general $\mathcal{CM}$ kernel functions in Section \ref{Volterra}. Several numerical examples and concluding remarks are included in Section \ref{numerical}.

\section{$\mathcal{CM}$-preserving numerical schemes for fractional ODEs}
\label{sec:Schemes}

Let us consider the fractional ODE \eqref{eq1} of order $\alpha\in (0, 1)$, 
subject to $u(0)=u_{0}>0$.
Consider the implicit scheme approximating $u(t_{n})$ by $u_{n}$ ($n\geq 1$) at the uniform grids $t_{n}=nh$ with step size $h>0$ of the following form:
\begin{gather}\label{eq22}
(\mathcal{D}_h^{\alpha}u)_{n} :=h^{-\alpha}\sum_{j=0}^n \omega_{j}(u_{n-j}-u_0)=f(t_{n}, u_n):=f_n,~~n\ge 1.
\end{gather}
If we would like to include $n=0$, \eqref{eq22} is written as
\begin{gather}\label{eq23}
h^{-\alpha}\sum_{j=0}^n \omega_{j}(u_{n-j}-u_0)= f_n-f_0\delta_{n,0},~~n\ge 0,
\end{gather}
where $\delta_{i,j}=1$ if $i=j$ and $\delta_{i,j}=0$ if $i\neq j$ is the usual Kronecker function so that $\delta_{n,0}$ is the $n$th entry of the convolutional identity $\delta_{d}:=(1, 0, 0,...).$

\begin{remark}
Note that we understand $\mathcal{D}_h^{\alpha}u$ in \eqref{eq22} as a sequence, and thus $(\mathcal{D}_h^{\alpha}u)_{n}$ means the $n$th term in the sequence. Later, we sometimes use sloppy notations like $\mathcal{D}_h^{\alpha}u_{n}$ or $\mathcal{D}_h^{\alpha}f(u_n)$ to mean the $n$th term of the sequence obtained by applying $\mathcal{D}_h^{\alpha}$ on the sequence $(u_n)$ or $(f(u_n))$. (It does not mean the operator acting on the constant $u_n$ or $f(u_n)$.)
\end{remark}

The convolution inverse of $\omega$ is defined by $a=\omega^{(-1)}$ such that $\omega*\omega^{(-1)}=\omega^{(-1)}*\omega=\delta_{d}$.
Let us introduce generating function of a sequence $v=(v_0, v_1, \ldots)$, defined by 
\begin{gather}\label{eqx3}
F_v(z)=\sum_{n=0}^{\infty} v_n z^n, \quad z\in\mathbb{C}.
\end{gather}
The generating function should be  understood in the sense of analytic continuation.  We choose the continuation that has the largest possible domain in the upper half plane and symmetric about the real axis \cite{fornberg2019complex}. 
%One standard example is the function 
%$
%F_1(z):=\sum_{j=0}^\infty z^j=\frac{1}{1-z},
%$
%which is well defined in the domain $|z|<1$. 
%It can be analytically continuation to the entire complex plane except for point $z=1$.
%% and we have that 
%%$
%%F_1(z):=\frac{1}{1-z} \text{for~ all}~ z\in\mathbb{C}~ \text{with ~exception} ~z=1.
%%$
For example, the generating function of the sequence $(1, 1, ...)$ is given by $F_1(z):=\frac{1}{1-z}$, which is defined in the entire plane except $z=1$.
%Hence we understand $F_1(z):=\frac{1}{1-z}$ as the generating function of the sequence $(1, ,1 ,\cdots)$, which is well defined except for $z=1$.

It is straightforward to verify that $F_{u*v}(z)=F_u(z)F_v(z)$. Hence, the generating functions of $a$ and $\omega$ are related by
$F_a(z)=\frac{1}{F_{\omega}(z)}$.  By the convolution inverse, the above numerical scheme (\ref{eq23}) can be written as
\begin{gather}\label{eq24}
u_n-u_0=h^{\alpha}[a*(f- f_0 \delta_{d})]_n=h^{\alpha}[a*f-f_0 a]_{n}=h^{\alpha}\sum_{j=0}^{n-1} a_j f_{n-j}, ~~n\ge 1,
\end{gather}
Hence, $\{a\}$ given in the numerical scheme can be regarded as some integral discretization of the fractional integral.

Following \cite{lubich1986}, we define
\begin{definition}\label{def:consistency}
We say discretization \eqref{eq23} or  \eqref{eq24} is consistent if 
$h^{\alpha}F_a(e^{-h})=1+o(1),~h\to 0^+$.
\end{definition}

Since the kernel $k_{\alpha}(t)=\frac{t_+^{\alpha-1}}{\Gamma(\alpha)}$ involved in the Riemann-Liouville fractional integral is a typical $\mathcal{CM}$ function, 
from the structure-preserving algorithm point of view,  it is natural to desire the corresponding numerical methods can inherit this key property at the discrete level. We are then motivated to define the following:
\begin{definition}
We say a consistent (in the sense of Definition \ref{def:consistency}) numerical method given in \eqref{eq22}
for the time fractional ODEs is $\mathcal{CM}$-preserving if the sequence $a=\omega^{(-1)}$ is a $\mathcal{CM}$ sequence.
\end{definition}

\subsection{General properties of $\mathcal{CM}$-preserving schemes}
\label{sec:generalproperties}

The $\mathcal{CM}$-preserving numerical schemes have many favorable properties, and we now investigate these properties. We first of all introduce the concept of Pick functions. A function $f:\mathbb{C}_+\to\mathbb{C}$ (where $\mathbb{C}_+$ denotes the upper half plane, 
not including the real line) is Pick if it is analytic such that $\mathrm{Im}(z)>0\Rightarrow \mathrm{Im}(f(z))\ge 0$. Throughout this paper, $\mathrm{Im}(z)$ and $\mathrm{Re}(z)$ denote the imaginary and real parts of $z$, respectively.
We have the following observation.
\begin{lemma}\label{lem:Pick}
If $F(z)$ is a Pick function and $\mathrm{Im}(F(z))$ achieves zero at some point in $\mathbb{C}_{+}$, then $F(z)$
is a constant.
\end{lemma}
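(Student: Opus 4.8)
The plan is to use the open mapping theorem (or equivalently the maximum principle for harmonic functions). Since $F$ is Pick, it is analytic on the connected open set $\mathbb{C}_+$, and $u(z) := \mathrm{Im}(F(z))$ is a harmonic function on $\mathbb{C}_+$ satisfying $u(z) \ge 0$ throughout, by the defining property of Pick functions. If $u$ attains the value $0$ at some interior point $z_0 \in \mathbb{C}_+$, then $u$ attains its minimum over the open connected set $\mathbb{C}_+$ at an interior point. By the minimum principle for harmonic functions (a consequence of the mean value property), a harmonic function attaining an interior extremum on a connected open set must be constant. Hence $u \equiv 0$ on $\mathbb{C}_+$, i.e. $\mathrm{Im}(F(z)) = 0$ everywhere.

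**Next I would** upgrade "$\mathrm{Im}(F) \equiv 0$" to "$F$ is constant." This follows from the Cauchy–Riemann equations: writing $F = v + iu$ with $u \equiv 0$, the Cauchy–Riemann equations force $\partial v/\partial x = \partial u/\partial y = 0$ and $\partial v/\partial y = -\partial u/\partial x = 0$, so $v$ is locally constant, hence constant on the connected set $\mathbb{C}_+$. Therefore $F$ is constant on $\mathbb{C}_+$. Alternatively, one can invoke the open mapping theorem directly: a non-constant analytic function maps open sets to open sets, so $F(\mathbb{C}_+)$ would be open and thus could not be contained in the closed half-plane $\{\mathrm{Im} \ge 0\}$ while touching its boundary line — touching the boundary at an interior image point contradicts openness of the image. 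Either route is short.

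**I expect no serious obstacle here** — this is a standard complex-analytic fact, and the only care needed is to state cleanly that $\mathbb{C}_+$ is connected (so the minimum principle and the "locally constant $\Rightarrow$ constant" step both apply) and that Pick functions are by definition analytic on all of $\mathbb{C}_+$, so the harmonic-function machinery is available globally on the domain rather than just locally. The one subtlety worth a sentence is that the minimum principle requires the extremum to be attained at an \emph{interior} point of the domain, which is exactly the hypothesis (the zero is achieved "at some point in $\mathbb{C}_+$", not merely on the real boundary), so there is no edge case to worry about. I would present the proof in two or three lines invoking the minimum principle for harmonic functions together with the Cauchy–Riemann equations.
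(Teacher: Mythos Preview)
Your proposal is correct and follows essentially the same approach as the paper: apply the maximum (minimum) principle to the nonnegative harmonic function $\mathrm{Im}\,F$ on the connected domain $\mathbb{C}_+$ to force $\mathrm{Im}\,F\equiv 0$, then use the Cauchy--Riemann equations to conclude that $\mathrm{Re}\,F$, and hence $F$, is constant. The paper's version is just the two-line form of exactly this argument; your added remark about the open mapping theorem is a valid alternative but not needed.
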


 Let $v=\mathrm{Im} F(z)$. Then $v$ is a harmonic function and $v\ge 0$. If $v$ achieves the minimum $0$ inside the domain, then it must be a constant
 by the maximal principle.
 Then, by Cauchy-Riemann equation, $\mathrm{Re}(F(z))$ is also constant and the result follows.

Now, we can state some properties of sequences in terms of the generating functions, for which we omit the proofs.
\begin{lemma}\label{lem:generating}
\begin{enumerate}[(1)]
\item (\cite[Corollary VI.1]{fs09}) Assume $F_v(z)$ is analytic on $\Delta :=\{z: |z|<R, z\neq 1, |\mathrm{arg}(z-1)|>\theta\}$, for some $R>1, \theta\in (0, \frac{\pi}{2})$. 
If $F_v(z)\sim (1-z)^{-\beta}$ as $z\to 1, z\in \Delta$ for $\beta\neq 0, -1, -2, -3, \cdots$, then $v_n \sim \frac{1}{\Gamma(\beta)}n^{\beta-1},~n\to \infty$.

\item $\lim_{n\to\infty}v_n=\lim_{z\to 1^-}(1-z)F_v(z)$.

\item (\cite{lp16}) A sequence $v$ is $\mathcal{CM}$ if and only if the generating function $F_v(z)=\sum_{j=0}^{\infty}v_jz^j$ is a Pick function that is analytic and nonnegative on $(-\infty, 1)$. 
\end{enumerate}
\end{lemma}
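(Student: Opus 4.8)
The three assertions in Lemma~\ref{lem:generating} are logically independent, so the plan is to dispatch them one at a time; only item~(2) calls for a genuine argument, since (1) and (3) are quotable theorems.

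For item~(1) I would simply invoke the singularity‑analysis transfer theorem in the form of \cite[Corollary VI.1]{fs09}. For the record, its proof represents the coefficient by Cauchy's formula $v_n=\frac{1}{2\pi i}\oint F_v(z)\,z^{-n-1}\,dz$, deforms the contour into a Hankel‑type path that hugs the singularity at $z=1$ while staying inside the analyticity domain $\Delta$, and then uses the local behaviour $F_v(z)\sim(1-z)^{-\beta}$ to show the integral is dominated by the portion within distance $O(1/n)$ of $z=1$; that portion reproduces the coefficient asymptotics of $(1-z)^{-\beta}$, namely $\binom{n+\beta-1}{n}\sim n^{\beta-1}/\Gamma(\beta)$.

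Item~(2) is the one I would prove from scratch; it is an Abelian limit statement. Assume $L:=\lim_{n\to\infty}v_n$ exists (in every application below, $v$ is $\mathcal{CM}$, hence $(v_n)$ is nonincreasing and nonnegative, so this limit is automatic). Boundedness of $(v_n)$ ensures $F_v(z)=\sum_n v_n z^n$ converges for $|z|<1$, and from $\sum_{n\ge 0}z^n=(1-z)^{-1}$ we obtain, for $z\in(0,1)$, the identity $(1-z)F_v(z)-L=(1-z)\sum_{n\ge 0}(v_n-L)z^n$. Fixing $\varepsilon>0$ and choosing $N$ with $|v_n-L|<\varepsilon$ for $n\ge N$, I would split the sum at $N$, bound the tail by $\varepsilon(1-z)\sum_{n\ge N}z^n\le\varepsilon$, and note that the remaining head $(1-z)\sum_{n<N}|v_n-L|$ is a fixed polynomial in $z$ times $(1-z)$, hence tends to $0$ as $z\to1^-$. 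Therefore $\limsup_{z\to1^-}|(1-z)F_v(z)-L|\le\varepsilon$ for every $\varepsilon$, i.e.\ $\lim_{z\to1^-}(1-z)F_v(z)=L$.

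For item~(3) I would cite the Liu--Pego characterization of Hausdorff moment sequences \cite{lp16}. The forward direction is short: by the Hausdorff moment theorem \cite{widder41} a $\mathcal{CM}$ sequence has the form $v_n=\int_0^1 s^n\,d\mu(s)$ with $\mu\ge 0$ finite on $[0,1]$, so $F_v(z)=\int_0^1(1-sz)^{-1}\,d\mu(s)$, which is analytic off $[1,\infty)$, nonnegative on $(-\infty,1)$ (there $1-sz>0$ for all $s\in[0,1]$), and sends the upper half plane into itself since $\mathrm{Im}\,(1-sz)^{-1}=s\,\mathrm{Im}(z)/|1-sz|^2\ge 0$; this makes $F_v$ Pick. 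The converse is the substantial content of \cite{lp16}: one feeds a Pick function analytic and nonnegative on $(-\infty,1)$ into the Nevanlinna integral representation and shows the side conditions force the representing measure to be supported so that the Taylor coefficients at $0$ form a Hausdorff moment sequence; I would not reprove this. The only real obstacle across the whole lemma is bookkeeping rather than mathematics, and the one subtlety worth recording is that (2) is to be read in the Abelian direction (existence of the sequential limit implies existence of the radial limit with the same value), the Tauberian converse requiring an extra regularity hypothesis that is never invoked here because the sequences to which (2) is applied are monotone.
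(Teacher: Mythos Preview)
Your proposal is correct. The paper itself \emph{omits the proofs entirely}: immediately before the lemma it announces ``we can state some properties of sequences in terms of the generating functions, for which we omit the proofs,'' and items (1) and (3) are simply attributed to \cite{fs09} and \cite{lp16}. So you have done strictly more than the paper: you correctly cite the same sources for (1) and (3), sketch their mechanisms accurately, and supply a clean Abelian argument for (2) that the paper does not bother to record.

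Your observation about the Abelian/Tauberian asymmetry in (2) is well taken and worth flagging. As stated, (2) is ambiguous about which direction is intended. You prove the Abelian direction (sequential limit $\Rightarrow$ radial limit), which is unconditionally true. Note, however, that in the paper's own application (Theorem~\ref{thm:consistency}, the case $m=1$) the lemma is invoked in the Tauberian direction: the radial limit $\lim_{z\to 1^-}(1-z)F_G(z)=1$ is computed and the conclusion $\lim_{n\to\infty}G_n=1$ is drawn. For that usage one needs a Tauberian hypothesis (e.g.\ slow oscillation or one-sided boundedness of $n(v_n-v_{n-1})$, as in the Hardy--Littlewood theorem); the sequence $G_n$ there is not obviously $\mathcal{CM}$, so your caveat that the converse ``requires an extra regularity hypothesis'' is exactly the right instinct, and arguably points to a small gap in the paper rather than in your proof.
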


In \cite{li2018note}, Li and Liu have proved that for a given $\mathcal{CM}$ sequence $a$ with $a_0>0$, the inverse sequence $\omega=a^{-1}$ has very nice sign consistency condition:
\begin{equation} \label{eq:signconsis}
\begin{split}
\hbox{(i):}~ \omega_{0}>0,~ \omega_{j}\leq 0~\hbox{for}~ j\geq1;\quad  \hbox{(ii):}~ \omega_{0}+\sum_{j=1}^{\infty}\omega_{j}\ge 0.
\end{split}
\end{equation}
When $\|a\|_{\ell^1}=\infty$, the last inequality becomes equality, which is the case for schemes of time fractional ODEs.

According this result, one is curious about the converse of the result: given
$\omega=(\omega_0, \omega_1, \cdots)$ with $\omega_0>0$, the sequence $(-\omega_1,-\omega_2,\cdots )$ to be $\mathcal{CM}$
and that  $\omega_0+\sum_{j=1}^{\infty}\omega_j\ge 0$,
can we have the convolutional inverse $a=\omega^{-1}$  to be also a $\mathcal{CM}$ sequence?
This is particularly interesting regarding $\mathcal{L}$1 scheme (see section \ref{sec:fourschemes} for more details). In $\mathcal{L}$1 scheme, we get a discrete convolutional scheme $\omega$, which is an approximation for the Caputo fractional derivative. By taking the inverse of $a=\omega^{(-1)}$, we then get a corresponding discrete convolutional scheme which is an approximation for the fractional integral, and what we need to do is verify that $a$ is a $\mathcal{CM}$ sequence.

In this subsection, we would like to establish our first main result, i.e., the converse of the Theorem 3.2 in \cite{li2018note} is also correct, that is, to establish a sufficient and necessary condition for the convolutional inverse of a  $\mathcal{CM}$ sequence. As an application of this results,  we will show in section \ref{sec:fourschemes} that the well known $\mathcal{L}$1 scheme is $\mathcal{CM}$-preserving.

\begin{theorem}\label{thm:CMequivalent}
The sequence $a=(a_0, \cdots)$ with $a_0>0$ is $\mathcal{CM}$ if and only if its convolution inverse $\omega=a^{-1}$ satisfies that $\omega_0>0$, that the sequence $(-\omega_1, -\omega_2, \cdots)$
is $\mathcal{CM}$ and that $\omega_0+\sum_{j=1}^{\infty}\omega_j\ge 0$. Moreover, $\omega_0+\sum_{j=1}^{\infty}\omega_j=\|a\|_{\ell^1}^{-1}$.
\end{theorem}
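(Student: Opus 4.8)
The plan is to translate everything into generating functions and lean on the Pick-function characterisation of $\mathcal{CM}$ sequences in Lemma~\ref{lem:generating}(3). Write $F_a,F_\omega$ for the generating functions of $a,\omega$; since $F_a(z)F_\omega(z)=1$, we have $a_0>0\iff\omega_0=1/a_0>0$, proving ``$a$ is $\mathcal{CM}$'' amounts to showing $F_a=1/F_\omega$ is Pick, analytic and nonnegative on $(-\infty,1)$, and proving ``$(-\omega_1,-\omega_2,\dots)$ is $\mathcal{CM}$'' amounts to the same for its generating function $G(z):=\bigl(\omega_0-F_\omega(z)\bigr)/z$. I will repeatedly use: the maps $w\mapsto-1/w$, $w\mapsto(t-w)^{-1}$ for $t>0$, and $w\mapsto w/(1-sw)$ for $s\in[0,1]$ are all Pick (in each case $\mathrm{Im}$ of the value is $\mathrm{Im}(w)$ over a nonnegative quantity); a nonconstant Pick function sends $\mathbb{C}_+$ into the open upper half plane by Lemma~\ref{lem:Pick}, hence never vanishes on $\mathbb{C}_+$, so its reciprocal remains analytic on $\mathbb{C}_+$; and the Nevanlinna representation of a Pick function, whose representing measure is carried by $[1,\infty)$ exactly when the function extends analytically (and real-valued) across $(-\infty,1)$. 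The degenerate cases $a=(a_0,0,0,\dots)$ (equivalently $\omega=(\omega_0,0,0,\dots)$) are trivial, so I assume non-degeneracy below.

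\emph{Necessity.} Let $a$ be $\mathcal{CM}$ with $a_0>0$. Then $F_a$ is Pick, analytic, and, being $\int_{[0,1]}(1-sz)^{-1}\,d\nu(s)$ with $\nu\neq0$, strictly positive on $(-\infty,1)$; hence $h(z):=\omega_0-F_\omega(z)=\omega_0-1/F_a(z)$ is Pick (it is $\omega_0$ plus the Pick function $-1/F_a$), analytic and real on $(-\infty,1)$, and $h(0)=\omega_0-1/a_0=0$. Feeding $h(0)=0$ into the Nevanlinna representation of $h$ (whose measure is supported on $[1,\infty)$) removes the constant term, and the identities $\frac{t}{1+t^2}+\frac1{t(1+t^2)}=\frac1t$ and $\frac1{t-z}-\frac1t=\frac{z}{t(t-z)}$ let one factor out a $z$, yielding
\[
\frac{h(z)}{z}=\beta+\int_{[1,\infty)}\frac{d\mu(t)}{t(t-z)},\qquad \beta\ge0,\ \ \mu\ge0,\ \ \int_{[1,\infty)}t^{-2}\,d\mu(t)<\infty,
\]
which is visibly Pick, analytic, and nonnegative on $(-\infty,1)$. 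As $h(z)/z=G(z)$ is the generating function of $(-\omega_1,-\omega_2,\dots)$, that sequence is $\mathcal{CM}$ by Lemma~\ref{lem:generating}(3). For the last condition, the partial sums of $(\omega_j)$ are nonincreasing from index $1$ on, so Abel's theorem gives $\sum_{j\ge0}\omega_j=\lim_{z\to1^-}F_\omega(z)=\lim_{z\to1^-}1/F_a(z)$, which equals $\|a\|_{\ell^1}^{-1}\ge0$ since $\lim_{z\to1^-}F_a(z)=\|a\|_{\ell^1}$ (again Abel, as $a_j\ge0$), with the convention $1/\infty=0$; this proves both the inequality and the ``moreover'' identity.

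\emph{Sufficiency.} Suppose $\omega_0>0$, that $p:=(-\omega_1,-\omega_2,\dots)$ is $\mathcal{CM}$, and that $\omega_0+\sum_{j\ge1}\omega_j\ge0$. Then $a_0=1/\omega_0>0$, and with $F_p$ the generating function of $p$ one has $F_\omega(z)=\omega_0-zF_p(z)$. Writing $F_p(z)=\int_{[0,1]}(1-sz)^{-1}\,d\rho(s)$ (Lemma~\ref{lem:generating}(3)) gives $zF_p(z)=\int_{[0,1]}\frac{z}{1-sz}\,d\rho(s)$, an integral of Pick functions and therefore Pick; hence $-F_\omega=zF_p-\omega_0$ is Pick, so $F_\omega$ maps $\mathbb{C}_+$ into the open lower half plane, never vanishes on $\mathbb{C}_+$, and $F_a=1/F_\omega$ is analytic on $\mathbb{C}_+$ with $\mathrm{Im}\,F_a\ge0$, i.e.\ $F_a$ is Pick. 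It remains to check that $F_a$ is analytic and nonnegative on $(-\infty,1)$, i.e.\ that $F_\omega>0$ there. If $z<0$ then $zF_p(z)\le0$ because $F_p\ge0$, so $F_\omega(z)\ge\omega_0>0$; if $z\in[0,1)$ then $0\le zF_p(z)=\sum_{j\ge1}(-\omega_j)z^j<\sum_{j\ge1}(-\omega_j)\le\omega_0$, the strict inequality using $z^j<1$ together with $p\not\equiv0$ and the last inequality being the hypothesis, so again $F_\omega(z)>0$. By Lemma~\ref{lem:generating}(3), $a$ is $\mathcal{CM}$; the identity $\omega_0+\sum_{j\ge1}\omega_j=\|a\|_{\ell^1}^{-1}$ follows just as in the necessity part from $\|a\|_{\ell^1}=\lim_{z\to1^-}F_a(z)=\lim_{z\to1^-}1/F_\omega(z)$.

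\emph{Main obstacle.} The delicate step is the use of the Nevanlinna representation in the necessity direction: one must check that the representing measure $\mu$ of $h$ really is supported on $[1,\infty)$ (forced by analyticity of $h$ on $(-\infty,1)$), and that dividing by $z$ produces a genuine Pick function. The pleasant point to verify is that the algebra collapses so that only the bound $\int t^{-2}\,d\mu<\infty$ is needed --- and this is automatic from the Nevanlinna condition $\int(1+t^2)^{-1}d\mu<\infty$ --- rather than the stronger $\int t^{-1}\,d\mu<\infty$, which can genuinely fail. The other point worth isolating is where the hypothesis $\omega_0+\sum_{j\ge1}\omega_j\ge0$ enters in the sufficiency direction: it is exactly what keeps $F_\omega$ bounded away from $0$ on $(-\infty,1)$, and without it $F_a=1/F_\omega$ would fail to be analytic there, so that $a$ would not be $\mathcal{CM}$ --- the hypothesis is thus indispensable for the converse rather than a technicality.
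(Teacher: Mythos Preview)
Your proof is correct, and differs from the paper's in two instructive ways.

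For the necessity direction (``$\Rightarrow$''), the paper gives no argument at all: it simply cites Theorem~3.2 of \cite{li2018note}. Your self-contained proof via the Nevanlinna representation of $h(z)=\omega_0-1/F_a(z)$ is therefore additional content. The point you isolate as the ``main obstacle''---that $h(0)=0$ lets one factor out $z$ from the integral using only the automatic bound $\int t^{-2}\,d\mu<\infty$---is exactly right and is a clean way to see that $G=h/z$ remains Pick and nonnegative on $(-\infty,1)$.

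For the sufficiency direction (``$\Leftarrow$''), the paper takes a more circuitous route: it introduces an $\epsilon$-perturbed auxiliary function $H_\epsilon(z)=\tfrac{1}{\epsilon}+\tfrac{z}{\epsilon+\omega_0-z(\epsilon+G(z))}$, checks by a bare-hands imaginary-part computation that $H_\epsilon$ is Pick and nonnegative on $(-\infty,1)$, deduces that the shifted sequence $(a_0(\epsilon),a_1(\epsilon),\dots)$ is $\mathcal{CM}$, and then passes to the limit $\epsilon\to0^+$. You instead go straight to the target: the Hausdorff integral $F_p(z)=\int_{[0,1]}(1-sz)^{-1}\,d\rho(s)$ shows $zF_p(z)$ is an integral of Pick functions, hence Pick, so $-F_\omega$ is Pick and $F_a=1/F_\omega$ is Pick by composition with $w\mapsto -1/w$; the positivity of $F_\omega$ on $(-\infty,1)$ then follows from the same inequality $\sum_{j\ge1}(-\omega_j)z^j<\omega_0$ that the paper also needs. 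The two arguments rest on the same underlying fact---$\mathrm{Im}\,G\ge0$ in $\mathbb{C}_+$---but yours avoids the perturbation and limit, at the price of invoking the Hausdorff moment representation explicitly (this is stated in the paper just before the proof of Theorem~\ref{thm:stabilityregion}, not in Lemma~\ref{lem:generating}(3) itself). The paper's $\epsilon$-trick, by contrast, needs only that $G$ is Pick, not its integral form.
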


\begin{proof}
The ``$\Rightarrow$'' direction has been proved in Theorem 3.2 in \cite{li2018note}. We now prove the reverse direction. 

Define the generating function for sequence $(-\omega_1, -\omega_2, \cdots)$ by
\begin{gather}\label{eq:Gfun}
G(z)=\sum_{j=0}^{\infty}(-\omega_{j+1})z^j=\sum_{j=1}^{\infty}(-\omega_j)z^{j-1}.
\end{gather}
Hence, one has $F_{\omega}(z)=\omega_0-z G(z)$.
By Lemma \ref{lem:generating}, $G(z)$ is a Pick function that is nonnegative and analytic on $(-\infty, 1)$. 
We now investigate the generating function of $a$:
\[
F_a(z)=F_{\omega}^{-1}(z)=\frac{1}{\omega_0-z G(z)}.
\]

To do this, for $\epsilon>0$ we consider an auxiliary function given by
\begin{gather}\label{eq:PHfun}
H_{\epsilon}(z)=\frac{1}{\epsilon}+\frac{z}{\epsilon+\omega_0-z(\epsilon+G(z))}=\frac{\epsilon+\omega_0-zG(z)}{\epsilon(\epsilon+\omega_0-z(\epsilon+G(z)))}.
\end{gather}
Since both $G(z)$ and $\epsilon+G(z)$ are nonnegative on $(-\infty, 1)$, one finds that
\[
\epsilon+\omega_0-z(\epsilon+G(z))>0, ~\epsilon+\omega_0-zG(z)>0
\]
for $z\le 0$. For $z\in (0, 1)$, it is then clear
\[
\epsilon+\omega_0-z(\epsilon+G(z))> \epsilon+\omega_0-(\epsilon+G(1))
=\omega_0-G(1)\ge 0.
\]
Similarly
\[
\epsilon+\omega_0-zG(z)\ge \epsilon+\omega_0-G(z)>0.
\]
Hence, $H_{\epsilon}(z)$ is nonnegative on $(-\infty, 1)$. 
The argument here also justifies that $A_{\epsilon}(z):=\epsilon+\omega_0-z(\epsilon+G(z))$ is never zero on $(-\infty, 1)$. Moreover, for $z\in \mathbb{C}_{+}$, the phase of $\epsilon+G(z)$ is in $(0, \pi)$, and thus
$z(\epsilon+G(z))$ cannot be a real positive number. Hence, $A_{\epsilon}(z)$ is never zero in the upper half plane so that $H_{\epsilon}(z)$ is analytic on $\mathbb{C}_{+}\cup(-\infty, 1)$. Moreover,
\[
\frac{z}{\epsilon+\omega_0-z(\epsilon+G(z))}=\frac{z(\epsilon+\omega_0)-|z|^2 \left(\epsilon+\overline{G(z)} \right)} {|\epsilon+\omega_0-z(\epsilon+G(z))|^2}.
\]
It follows from $\mathrm{Im}(z)>0 \Rightarrow \mathrm{Im}(G(z))\ge 0$ that $\mathrm{Im} \left( \overline{G(z)} \right)\le 0$ for $\mathrm{Im}(z)>0$. We find that 
$H_{\epsilon}(z)$ is a Pick function.
Hence, the sequence
\begin{gather}\label{eq:Psequence}
\left( \frac{1}{\epsilon}, a_0(\epsilon), a_1(\epsilon), \cdots \right)
\end{gather}
corresponding to the generating function $H_{\epsilon}(z)$ is $\mathcal{CM}$.

By the definition (equation \eqref{eq:CMsequencedef}), $\left(a_0(\epsilon), a_1(\epsilon), \cdots\right)$ is also $\mathcal{CM}$. This sequence corresponds to the generating function
\begin{gather}\label{eq:PFfun}
F_{a(\epsilon)}(z)=\frac{1}{\omega_0+\epsilon-z(\epsilon+G(z))},
\end{gather}
which must be Pick and nonnegative on $(-\infty, 1)$ by Lemma \ref{lem:generating}(3).
We first note that $F_a(z)=\frac{1}{\omega_0-zG(z)}$ is analytic in $\mathbb{C}_{+}\cup(-\infty, 1)$ by similar argument. Then, taking $\epsilon\to 0^{+}$, as the pointwise limit of $F_{a(\epsilon)}(z)$, $F_a(z)$ must also be Pick and nonnegative on $(-\infty, 1)$. Hence,
$(a_0, a_1, \cdots)$ is $\mathcal{CM}$ by Lemma \ref{lem:generating}(3). 

Regarding the equality $\omega_0+\sum_{j=1}^{\infty}\omega_j=\|a\|_{\ell^1}^{-1}$, we just note $F_a(z)=F_{\omega}^{-1}(z)$, take $z\to 1^-$ and apply the monototone convergence theorem due to signs of $a_j$'s and $\omega_j$'s.
\end{proof}

With results in Lemma \ref{lem:generating} and Theorem \ref{thm:CMequivalent}, we are able to establish a series of basic properties of $\mathcal{CM}$-preserving schemes. The first result is as follows.
\begin{proposition}\label{pro:propertyomega}
If the discretization is $\mathcal{CM}$-preserving with $a_0>0$, then
\begin{gather}
\begin{split}
a_j\sim \frac{1}{\Gamma(\alpha)}j^{\alpha-1},~j\to\infty,~~
 h^{\alpha}\sum_{j=1}^na_j\le C(nh)^{\alpha},~\forall n.
\end{split}
\end{gather}
Moreover, the convolutional inverse $\omega$ satisfies: $\omega_0>0$  and $\omega_j \le 0$ for all $j=1,2,\cdots$ and
%\[
$\omega_0+\sum_{j=1}^{\infty}\omega_j=0.$
%\]
The generating function is given by $F_{\omega}(z)=(1+o(1))(1-z)^{\alpha}$,~$z\to 1$ so that
%\[
$\omega_j\sim \frac{1}{\Gamma(-\alpha)}j^{-1-\alpha},~j\to\infty.$
%\]
\end{proposition}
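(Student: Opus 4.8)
The plan is to separate the claims into (a) the sign and summation properties of the inverse $\omega$, and (b) the asymptotics of $a_j$, of $\omega_j$, and of $F_\omega$ near $z=1$, reducing all of (b) to a single sectorial estimate on the generating function. For (a), since $a$ is $\mathcal{CM}$ with $a_0>0$, Theorem~\ref{thm:CMequivalent} applies directly and gives $\omega_0>0$, that $(-\omega_1,-\omega_2,\cdots)$ is $\mathcal{CM}$ --- so in particular $-\omega_j\ge 0$, i.e. $\omega_j\le 0$ for every $j\ge 1$ --- and that $\omega_0+\sum_{j\ge 1}\omega_j=\|a\|_{\ell^1}^{-1}$. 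To upgrade this to $\omega_0+\sum_{j\ge 1}\omega_j=0$ it then suffices to show $\|a\|_{\ell^1}=\infty$: as $a_j\ge 0$, monotone convergence gives $F_a(x)\uparrow\|a\|_{\ell^1}$ as $x\uparrow 1$, and were this limit finite we would have $h^{\alpha}F_a(e^{-h})\to 0$ as $h\to 0^+$ (because $\alpha>0$), contradicting the consistency requirement $h^{\alpha}F_a(e^{-h})=1+o(1)$ of Definition~\ref{def:consistency}.

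For (b), I would first read off the behaviour of $F_a$ along the real axis from consistency: with $x=e^{-h}$ one has $1-x=h(1+o(1))$, hence $F_a(x)=(1+o(1))(1-x)^{-\alpha}$ as $x\uparrow 1$. The crux is to upgrade this to the \emph{sectorial} asymptotic
\[
F_a(z)=(1+o(1))(1-z)^{-\alpha}, \qquad z\to 1,\ z\in\Delta,
\]
on a domain $\Delta$ of the type in Lemma~\ref{lem:generating}(1). For this I would use the $\mathcal{CM}$ structure: $a$ being $\mathcal{CM}$ is a Hausdorff moment sequence $a_j=\int_0^1 t^j\,d\mu(t)$, so $F_a(z)=\int_0^1(1-tz)^{-1}\,d\mu(t)$ is analytic on $\mathbb{C}\setminus[1,\infty)$ --- hence on every such $\Delta$ --- and one checks $F_a$ is positive on $(-\infty,1)$ and has strictly positive imaginary part off $\mathbb{R}$, so $F_a$ never vanishes there. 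Then a Karamata-type Tauberian argument turns the real-axis rate into the statement that the mass of $\mu$ near $t=1$ is regularly varying of index $1-\alpha$, and this regular variation yields the displayed uniform estimate as $z\to 1$ throughout $\Delta$.

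Granting the sectorial estimate, everything else is routine. Lemma~\ref{lem:generating}(1) with $\beta=\alpha\notin\{0,-1,-2,\cdots\}$ gives $a_j\sim\frac{1}{\Gamma(\alpha)}j^{\alpha-1}$; together with $a_j\ge 0$ this forces $\sum_{j=1}^{n}a_j\le Cn^{\alpha}$ (only finitely many small indices can exceed a constant multiple of $j^{\alpha-1}$, since $a$ is bounded), i.e. $h^{\alpha}\sum_{j=1}^{n}a_j\le C(nh)^{\alpha}$ for all $n$. Since $F_\omega=1/F_a$ and $F_a$ is analytic and nonvanishing on $\mathbb{C}\setminus[1,\infty)$, $F_\omega$ is analytic there and inherits $F_\omega(z)=(1+o(1))(1-z)^{\alpha}$ as $z\to 1$ in $\Delta$; a second application of Lemma~\ref{lem:generating}(1), now with $\beta=-\alpha\notin\{0,-1,-2,\cdots\}$, yields $\omega_j\sim\frac{1}{\Gamma(-\alpha)}j^{-1-\alpha}$. (Alternatively, the two coefficient asymptotics can be obtained directly from the real-axis rate by a Karamata Tauberian theorem, using that $a$ and $(-\omega_j)_{j\ge 1}$ are nonincreasing.)

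The step I expect to be the real obstacle is precisely the passage from the real-axis asymptotic --- all that the consistency condition supplies on its own --- to the sectorial $\Delta$-domain asymptotic demanded by the transfer theorem: a function can match a prescribed rate along $\mathbb{R}$ while oscillating badly nearby, so this step genuinely exploits the $\mathcal{CM}$ (Hausdorff-moment, equivalently Pick--Stieltjes) structure of $F_a$ and its analyticity off $[1,\infty)$. I would isolate it as a separate lemma and carry it out with a Karamata Tauberian theorem applied to the representing measure near $1$.
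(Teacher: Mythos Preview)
Your proposal is correct and follows the same route as the paper's brief argument: the sign and summation properties of $\omega$ come from Theorem~\ref{thm:CMequivalent} (with $\|a\|_{\ell^1}=\infty$ forced by consistency), and the coefficient asymptotics via the transfer theorem Lemma~\ref{lem:generating}(1). You are in fact more careful than the paper --- which only records $F_a(z)=(1+o(1))(1-z)^{-\alpha}$ as $z\to 1^-$, notes that $F_a$ is Pick and nonvanishing, and then invokes Lemma~\ref{lem:generating} with ``we omit the details'' --- in isolating the passage from the real-axis estimate supplied by consistency to the sectorial $\Delta$-domain estimate required by the transfer theorem, and your proposal to bridge it via the Hausdorff-moment representation and a Karamata argument (or, more directly, via a monotone Tauberian theorem using that the $\mathcal{CM}$ sequences $a$ and $(-\omega_j)_{j\ge 1}$ are nonincreasing) is sound.
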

Definition \ref{def:consistency} directly means $F_a(z)=(1+o(1))(1-z)^{-\alpha}$ as $z\to 1^-$. The generating function of the sequence $\{A_n:=\sum_{j=0}^n a_j\}_{n=0}^{\infty}$
is $(1-z)^{1-\alpha}(1+o(1))$.
Moreover, since $F_a(z)$ is a Pick function with $a_0>0$, then $F_a(z)$ is analytic in $\mathbb{C}_+$ without zeros in the upper half plane. The claims then follow directly from Lemma \ref{lem:generating} and Theorem \ref{thm:CMequivalent}. We omit the details.

This good sign invariant property in the coefficients of $\{\omega_{j}\}$ plays a key role in energy methods for numerical analysis \cite{wang2018long, wang2019dis,li2019discretization}. One obvious observation is
 \begin{proposition}\label{pro:energy}
Assume the scheme for the discrete Caputo operator $\mathcal{D}^{\alpha}_{h}$ in (\ref{eq22}) is $\mathcal{CM}$-preserving. Consider that $E(\cdot): \mathbb{R}^d\to \mathbb{R}$ is a convex function. Then, we have
\begin{gather}
\mathcal{D}_h^{\alpha} E(u_n)\le \nabla E(u_n)\cdot \mathcal{D}_h^{\alpha}u_n.
\end{gather}
 \end{proposition}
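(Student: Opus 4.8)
The plan is to exploit the defining property of $\mathcal{CM}$-preserving schemes, namely that $\omega_0 > 0$ and $\omega_j \le 0$ for all $j \ge 1$ (from Proposition \ref{pro:propertyomega}), together with convexity of $E$. First I would write out the left-hand side explicitly using \eqref{eq22}:
\[
\mathcal{D}_h^{\alpha} E(u_n) = h^{-\alpha}\sum_{j=0}^n \omega_j\bigl(E(u_{n-j}) - E(u_0)\bigr)
= h^{-\alpha}\left[\omega_0 E(u_n) + \sum_{j=1}^n \omega_j E(u_{n-j}) - \Bigl(\sum_{j=0}^n \omega_j\Bigr) E(u_0)\right].
\]
The target inequality rearranges to the statement that
\[
h^{-\alpha}\sum_{j=0}^n \omega_j\Bigl[E(u_{n-j}) - E(u_0) - \nabla E(u_n)\cdot(u_{n-j} - u_0)\Bigr] \le 0,
\]
since $\mathcal{D}_h^{\alpha}u_n = h^{-\alpha}\sum_{j=0}^n \omega_j(u_{n-j}-u_0)$ by definition, and the gradient term is a constant vector that pulls through the sum.

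Next I would handle the bracketed quantity termwise. For $j = 0$ the bracket is identically zero, so that term contributes nothing. For each $j \ge 1$, convexity of $E$ gives the supporting-hyperplane (subgradient) inequality evaluated at the base point $u_n$:
\[
E(v) \ge E(u_n) + \nabla E(u_n)\cdot(v - u_n) \quad \text{for all } v,
\]
which I would apply with $v = u_{n-j}$ and with $v = u_0$. Subtracting the two applications yields
\[
E(u_{n-j}) - E(u_0) \ge \nabla E(u_n)\cdot(u_{n-j} - u_0),
\]
so the bracket is nonnegative for every $j \ge 1$. Since each corresponding coefficient $\omega_j$ is $\le 0$ and $h^{-\alpha} > 0$, every term in the sum over $j \ge 1$ is $\le 0$, hence the whole sum is $\le 0$, which is exactly the rearranged inequality. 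Unwinding the rearrangement gives $\mathcal{D}_h^{\alpha}E(u_n) \le \nabla E(u_n)\cdot \mathcal{D}_h^{\alpha}u_n$.

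There is essentially no hard obstacle here; this is the discrete analogue of the chain-rule inequality for the continuous Caputo derivative, and the only thing one must be careful about is the bookkeeping with the initial-value offset $u_0$ and the fact that the weights need not sum to zero unless $\|a\|_{\ell^1} = \infty$ — but the argument above never uses $\sum_j \omega_j = 0$, since the $E(u_0)$ and $\nabla E(u_n)\cdot u_0$ pieces are precisely matched inside each bracket before summing. One should also note that $\nabla E$ exists since $E$ is convex on $\mathbb{R}^d$ (differentiability can be assumed, or one may phrase the inequality with a subgradient); I would state the convexity of $E$ as implying the supporting-hyperplane inequality and leave it at that. If desired, I would add a remark that when $E$ is merely convex and not differentiable, the same proof gives $\mathcal{D}_h^{\alpha}E(u_n) \le \xi \cdot \mathcal{D}_h^{\alpha}u_n$ for any $\xi \in \partial E(u_n)$.
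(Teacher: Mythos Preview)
Your argument contains a genuine error at the ``subtracting the two applications'' step. From the supporting-hyperplane inequality at $u_n$ you correctly get
\[
E(u_{n-j}) \ge E(u_n) + \nabla E(u_n)\cdot(u_{n-j}-u_n)
\quad\text{and}\quad
E(u_0) \ge E(u_n) + \nabla E(u_n)\cdot(u_0-u_n),
\]
but subtracting two inequalities with the same direction is not valid, so the conclusion $E(u_{n-j}) - E(u_0) \ge \nabla E(u_n)\cdot(u_{n-j}-u_0)$ does not follow. In fact your bracket
\[
B_j := E(u_{n-j}) - E(u_0) - \nabla E(u_n)\cdot(u_{n-j}-u_0)
\]
has no definite sign for $1\le j\le n-1$: take $d=1$, $E(u)=u^2$, $u_n=0$, $u_{n-j}=1$, $u_0=2$, giving $B_j = 1-4-0 = -3 < 0$. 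Also note that $B_0 = E(u_n)-E(u_0)-\nabla E(u_n)\cdot(u_n-u_0)$ is not identically zero as you assert; by convexity it is $\le 0$, but this sign goes the wrong way against $\omega_0>0$.

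The paper's remedy is to re-center everything at $u_n$ before invoking convexity. Using $\omega_0+\sum_{j\ge 1}\omega_j=0$ (which holds for $\mathcal{CM}$-preserving schemes since $\|a\|_{\ell^1}=\infty$), set $c_j=-\omega_j\ge 0$ and $\sigma_n=\sum_{j\ge n}c_j\ge 0$, and rewrite
\[
(\mathcal{D}_h^\alpha u)_n = h^{-\alpha}\Bigl(\sum_{j=1}^{n-1} c_j\,(u_n-u_{n-j}) + \sigma_n\,(u_n-u_0)\Bigr).
\]
Now every coefficient is nonnegative and every difference is of the form $u_n - v$, so a \emph{single} application of $\nabla E(u_n)\cdot(u_n-v)\ge E(u_n)-E(v)$ per term gives the claim. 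Contrary to your closing remark, the argument does require the partial-sum condition $\sum_{j=0}^{n}\omega_j\ge 0$ (equivalently $\sigma_n\ge 0$); without it the $u_0$-offset cannot be absorbed.
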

For the proof, one may make use of the fact that $\omega_0+\sum_{j=1}^{\infty}\omega_j=0$ (due to $\|a\|_{\ell^1}=\infty$) to define $c_j=-\omega_j\ge 0$
and $\sigma_n:=\sum_{j=n}^{\infty}c_j\ge 0$, so that
\[
(\mathcal{D}_h^{\alpha}u)_n=h^{-\alpha}\left(\sum_{j=1}^{n-1}c_j(u_n-u_j)+\sigma_n(u_n-u_0)\right).
\]
 The claim then follows from the convexity: $\nabla E(u_n)\cdot(u_n-u_j)
 \ge E(u_n)-E(u_j)$. We skip the details.
 
The sign properties also guarantee the discrete fractional comparison principles as follows (see \cite{li2019discretization} for relevant discussions).  

\begin{proposition}\label{pro:comppf}
Let $\mathcal{D}^{\alpha}_{h}$ be the discrete Caputo operator defined in (\ref{eq22}) and the corresponding numerical schemes are $\mathcal{CM}$-preserving.
Assume three sequences $u,v,w$ satisfy $u_0\le v_0\le w_0$.
\begin{enumerate}[(1)]
\item Suppose $f(s, \cdot)$ is non-increasing and the following discrete implicit relations hold
\[
\mathcal{D}^{\alpha}_{h}u_n \le f(t_n, u_n),~~\mathcal{D}^{\alpha}_{h}v_n = f(t_n, v_n),~~
\mathcal{D}^{\alpha}_{h}w_n \ge f(t_n, w_n).
\]
Then,  $u_n\le v_n \le w_n$.

\item Assume $f$ is Lipschitz continuous in the second variable with Lipschitz constant $L$. If
\[
\mathcal{D}^{\alpha}_{h}u_n\le f(t_n, u_n),~~\mathcal{D}^{\alpha}_{h}v_n = f(t_n, v_n),~~
\mathcal{D}^{\alpha}_{h}w_n \ge f(t_n, w_n),
\]
then for step size $h$ with $h^{\alpha}La_0<1$, $u_n\le v_n \le w_n$.

\item Assume $f(t, \cdot)$ is nondecreasing and Lipschitz continuous in the second variable with Lipschitz constant $L$. If for $h$ with $h^{\alpha}La_0<1$,
\begin{gather*}
\begin{split}
&u_n\leq u_0+h^\alpha\sum_{j=0}^{n-1}a_{j} f(t_{n-j}, u_{n-j}), v_n= v_0+h^\alpha\sum_{j=0}^{n-1}a_{j}f(t_{n-j}, v_{n-j}),\\
&w_n\geq w_0+h^\alpha\sum_{j=0}^{n-1}a_{j}f(t_{n-j}, w_{n-j}),
\end{split}
\end{gather*}
then $u_n\le v_n \le w_n.$

\end{enumerate}
\end{proposition}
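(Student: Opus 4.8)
The plan is to prove all three parts by strong induction on $n$, exploiting the sign properties of the $\mathcal{CM}$-preserving scheme established in Proposition \ref{pro:propertyomega}: namely $\omega_0>0$, $\omega_j\le 0$ for $j\ge 1$, $\sum_{j\ge 0}\omega_j=0$, $a_j\ge 0$ for all $j$, and $a_0>0$. I would treat the two inequalities $u_n\le v_n$ and $v_n\le w_n$ symmetrically, so it suffices to handle, say, $v_n\le w_n$ (the other one being the same argument with the roles reversed). The base case $n=0$ is the hypothesis $v_0\le w_0$.

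For part (1), I would use the differential form. Write $e_n := w_n - v_n$, and assume inductively $e_0,\dots,e_{n-1}\ge 0$. Subtracting the two relations gives $\mathcal{D}_h^\alpha w_n - \mathcal{D}_h^\alpha v_n \ge f(t_n,w_n)-f(t_n,v_n)$, i.e. $h^{-\alpha}\sum_{j=0}^n \omega_j(e_{n-j}-e_0) \ge f(t_n,w_n)-f(t_n,v_n)$. Isolating the $j=0$ term, $h^{-\alpha}\omega_0 e_n = h^{-\alpha}\omega_0 e_0 + \big(\text{rest}\big)$; using $\sum_{j}\omega_j=0$ one can rewrite the left side purely in terms of differences, as in the proof of Proposition \ref{pro:energy}: with $c_j=-\omega_j\ge 0$ and $\sigma_n=\sum_{j\ge n}c_j\ge 0$,
\[
h^{-\alpha}\Big(\sum_{j=1}^{n-1}c_j(e_n-e_j)+\sigma_n e_n\Big) \ge f(t_n,w_n)-f(t_n,v_n).
\]
Suppose for contradiction $e_n<0$. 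Then every term $e_n-e_j<0$ (since $e_j\ge 0$) and $\sigma_n e_n\le 0$, so the left side is strictly negative; meanwhile, since $f(t_n,\cdot)$ is non-increasing and $w_n<v_n$, the right side is $\ge 0$. This is a contradiction, so $e_n\ge 0$. The case of $u$ versus $v$ is identical.

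For part (2), the monotonicity of $f$ is dropped, so I would instead use the integral form \eqref{eq24}. Again set $e_n=w_n-v_n$; subtracting the relations (after converting the implicit inequalities to the resolvent form via convolution with $a$, which preserves inequalities because $a_j\ge 0$) gives
\[
e_n \ge e_0 + h^\alpha\sum_{j=0}^{n-1} a_j\big(f(t_{n-j},w_{n-j})-f(t_{n-j},v_{n-j})\big).
\]
Split off the $j=0$ term $h^\alpha a_0(f(t_n,w_n)-f(t_n,v_n))$, which is the only term involving $e_n$; bound it below using $|f(t_n,w_n)-f(t_n,v_n)|\le L|e_n|$, and bound the remaining terms $j\ge 1$ below using the inductive hypothesis $e_{n-j}\ge 0$ together with $a_j\ge 0$ and $|f(t_{n-j},w_{n-j})-f(t_{n-j},v_{n-j})|\le L e_{n-j}$. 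This yields $e_n \ge -h^\alpha a_0 L |e_n|$, i.e. $(1-h^\alpha a_0 L)e_n \ge -\,(\text{something}\ge 0\ \text{if}\ e_n\ge0)$ --- more carefully, if $e_n<0$ then $e_n\ge h^\alpha a_0 L e_n$, so $(1-h^\alpha a_0 L)e_n\ge 0$, which forces $e_n\ge 0$ under the assumption $h^\alpha L a_0<1$, a contradiction. Part (3) is the same computation as part (2) but starting directly from the given integral inequalities (no conversion step needed), and here the additional hypothesis that $f(t,\cdot)$ is nondecreasing is not even needed for the sign bound --- the Lipschitz bound on the $j\ge 1$ terms already gives $a_j(f(t_{n-j},w_{n-j})-f(t_{n-j},v_{n-j}))\ge -L a_j e_{n-j}$; actually using monotonicity one gets the cleaner bound $\ge 0$ directly for those terms, so only the $j=0$ term needs the smallness of $h$.

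The main obstacle I anticipate is purely bookkeeping rather than conceptual: in parts (2) and (3) one must carefully track the sign of $f(t_n,w_n)-f(t_n,v_n)$ when $e_n$ has not yet been shown nonnegative (so one cannot yet use monotonicity of $f$ at step $n$ itself), which is exactly why the Lipschitz bound $|f(t_n,w_n)-f(t_n,v_n)|\le L|e_n|$ and the constraint $h^\alpha L a_0<1$ enter. One should also double-check that converting the implicit \emph{inequalities} $\mathcal{D}_h^\alpha u_n\le f(t_n,u_n)$ into integral form via convolution with $a$ indeed preserves the inequality direction --- this is where $a$ being a nonnegative (in fact $\mathcal{CM}$) sequence is used, via Proposition \ref{pro:propertyomega}.
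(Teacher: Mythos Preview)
Your arguments for parts (1) and (3) are essentially correct. For part (1) you argue by contradiction after rewriting $(\mathcal{D}_h^\alpha e)_n$ via the $c_j,\sigma_n$ decomposition; note the small slip that the last term should be $\sigma_n(e_n-e_0)$ rather than $\sigma_n e_n$, but this does not affect the argument since $e_0\ge 0$. The paper instead introduces $\eta_n=(u_n-v_n)\vee 0$ and shows $(\mathcal{D}_h^\alpha \eta)_n\le 0$ using the sign pattern of $\omega$, then concludes $\eta_n\equiv 0$ by induction. Both routes are valid; yours is arguably more direct.

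There is, however, a genuine gap in your part (2). After passing to the integral form you have
\[
e_n \;\ge\; e_0 + h^\alpha a_0\bigl(f(t_n,w_n)-f(t_n,v_n)\bigr) + h^\alpha\sum_{j=1}^{n-1} a_j\bigl(f(t_{n-j},w_{n-j})-f(t_{n-j},v_{n-j})\bigr).
\]
With only Lipschitz continuity and no monotonicity, the best lower bound on each history term ($j\ge 1$) is $-L a_j e_{n-j}\le 0$, so the tail contributes $-h^\alpha L\sum_{j=1}^{n-1}a_j e_{n-j}$, which can be arbitrarily negative (recall $\sum_j a_j=\infty$). Hence you cannot conclude $e_n\ge -h^\alpha a_0 L|e_n|$ as you claim. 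In fact, after the conversion you are precisely in the setting of part (3) but \emph{without} the nondecreasing hypothesis on $f$, and your own part (3) argument uses that hypothesis exactly to discard the $j\ge 1$ terms.

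The fix, which is what the paper does, is to stay in the differential form for part (2). With $e_{n-j}\ge 0$ for $1\le j\le n-1$ and $e_0\ge 0$ from the induction hypothesis, the signs $\omega_j\le 0$ and $\sum_{j=0}^{n-1}\omega_j\ge 0$ give directly
\[
h^{-\alpha}\omega_0\, e_n \;\ge\; (\mathcal{D}_h^\alpha e)_n \;\ge\; f(t_n,w_n)-f(t_n,v_n)\;\ge\; -L|e_n|,
\]
so that $e_n\ge -h^\alpha a_0 L|e_n|$, and the contradiction argument you wrote then goes through. The point is that in the differential form the history terms $\omega_j e_{n-j}$ are automatically nonpositive and can be dropped in the favourable direction, whereas in the integral form the history terms $a_j\bigl(f(\cdot,w_{n-j})-f(\cdot,v_{n-j})\bigr)$ carry no sign information without monotonicity of $f$. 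Your remark about convolution with $a$ preserving inequalities is correct, but it is not where the difficulty lies.
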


The proof is similar to the ones in \cite{li2019discretization}, and we give some brief proofs in Appendix \ref{app:compproof}.

\subsection{Four $\mathcal{CM}$-preserving numerical schemes}
\label{sec:fourschemes}

In this subsection, we identify several concrete $\mathcal{CM}$-preserving numerical schemes. We need to verify that the sequence $a=\{a_j\}$ is a $\mathcal{CM}$ sequence. One can either check this directly using definition (equation \eqref{eq:CMsequencedef}), use Theorem \ref{thm:CMequivalent} or check if the generating functions $F_{a}(z)$ is a Pick function or not and the non-negativity on $(-\infty, 1)$ according to Lemma \ref{lem:generating}. 

\subsubsection{The Gr\"{u}nwald-Letnikov (GL) scheme} 

Consider the Gr\"{u}nwald-Letnikov (GL) scheme for approximating of Riemann-Liouville fractional derivative \cite{diethelm10},
 whose generating function is 
$F_{\omega}(z)=(1-z)^{\alpha},$
where we recall that the branch cut for the mapping $w\mapsto w^{\alpha}$ is taken to be the negative real axis.
Hence,
\begin{gather}\label{eq25}
F_{a}(z)=(1-z)^{-\alpha}.
\end{gather}
It is easy to verify that $F_a(z)$ is a pick function and analytic, positive
on $(-\infty, 1)$.
Hence, $a$ is a $\mathcal{CM}$ sequence and the scheme \eqref{eq22} with $\{\omega_j\}$ given by the GL scheme is $\mathcal{CM}$-preserving.

\subsubsection{The $\mathcal{L}$1 scheme}

The $\mathcal{L}$1 scheme, which was independently developed and analyzed in  
\cite{sun2006fully} and \cite{lin2007finite}, can be seen the fractional generalization of the backward Euler scheme for ODEs. 
On the uniform grid $t_n=nh$ for $n=0,1,...$, the $\mathcal{L}$1 scheme for $n\geq 1$ is given by
 \begin{equation} \label{eq:L1scheme}
\begin{split}
\mathcal{D}_c^{\alpha}u(t_n)&=\frac{1}{\Gamma(1-\alpha)} \sum_{j=0}^{n-1}\int_{t_j}^{t_{j+1}} \frac{u'(s)}{(t_n-s)^{\alpha}}ds\\
&\approx \frac{1}{\Gamma(1-\alpha)} \sum_{j=0}^{n-1}\frac{u(t_{j+1})- u(t_{j})} {h}  \int_{t_j}^{t_{j+1}} \frac{1}{(t_n-s)^{\alpha}}ds\\
&= \sum_{j=0}^{n-1}b_j \frac{u(t_{n-j})- u(t_{n-j-1})} {h^{\alpha}} \\
&= \frac{1}{h^{\alpha}} \left(b_{0}u_{n}-b_{n-1}u_{0}+\sum_{j=1}^{n-1}(b_{j}-b_{j-1})u_{n-j}  \right),
\end{split}
\end{equation}
where the coefficients $b_{j}= ((j+1)^{1-\alpha}-j^{1-\alpha} )/\Gamma(2-\alpha)$, $j=0,1,2,...,n-1$. It can be written in the discrete convolution form
\[
\mathcal{D}_h^{\alpha}(u_n):=\frac{1}{h^{\alpha}} \left(\sum_{j=0}^{n-1} \omega_{j}u_{n-j}-\sigma_n u_0 \right)
= \frac{1}{h^{\alpha}} \sum_{j=0}^{n} \omega_{j}(u_{n-j}-u_0),
\] where
\begin{equation} \label{eq:L1coeff}
\begin{split}
\omega_{0}&=\frac{1}{\Gamma(2-\alpha)},\quad \sigma_{n}=b_{n-1}= \frac{1}{\Gamma(2-\alpha)} \left( n^{1-\alpha}-(n-1)^{1-\alpha}\right),\\
\omega_{j}&=\frac{1}{\Gamma(2-\alpha)} \left( (j+1)^{1-\alpha}-2j^{1-\alpha}+(j-1)^{1-\alpha}\right),\quad j\geq 1.
\end{split}
\end{equation}
One can check the coefficients $\{\omega_{j}\}$ satisfy the sign consistency condition given in \eqref{eq:signconsis} (with the last inequality being equality).
Moreover, $\sigma_n=-\sum_{j=n}^{\infty}\omega_j$.

The $\mathcal{L}$1 scheme is among the most popular and successful numerical approximations for Caputo derivatives, and is very easy to implement with acceptable precision. In \cite{jin2015analysis}, Jin et.al. strictly analyzed the convergence for both smooth and non-smooth initial data and established the optimal first order convergence rate for non-smooth data. In \cite{yan2018analysis}, Yan et.al. further provided a correction technique, in which the convergence rate for non-smooth data can be improved to $(2-\alpha)$-th order.
From \eqref{eq:L1scheme} we can see that if we consider the partition in a non-uniform grid with $h_{j}=t_{j+1}-t_j$, we can get a similar numerical scheme. This provides a good basis for various numerical approximation for Caputo derivatives on non-uniform grids, see \cite{kopteva2019error,liao2019discrete,stynes2017error,li2019linearized}.

As an application of Theorem \ref{thm:CMequivalent}, we show that 
the $\mathcal{L}$1 scheme with uniform mesh size is a $\mathcal{CM}$-preserving scheme.
\begin{proposition}\label{pro:L1cm}
For the sequence $\omega=\{\omega_j\}$ defined in \eqref{eq:L1coeff}, the convolutional inverse $a=\omega^{(-1)}$ is a $\mathcal{CM}$ sequence. Hence, the $\mathcal{L}$1 scheme is $\mathcal{CM}$-preserving.
\end{proposition}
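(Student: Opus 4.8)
The plan is to deduce the statement directly from Theorem~\ref{thm:CMequivalent}. The inverse sequence appearing there is exactly the $\mathcal{L}1$ data $\omega$ of \eqref{eq:L1coeff}, and $a_0=\omega_0^{-1}=\Gamma(2-\alpha)>0$, so it remains to verify the three conditions on $\omega$: (i) $\omega_0>0$; (ii) $(-\omega_1,-\omega_2,\dots)$ is $\mathcal{CM}$; (iii) $\omega_0+\sum_{j\ge 1}\omega_j\ge 0$. Writing $b_j=\bigl((j+1)^{1-\alpha}-j^{1-\alpha}\bigr)/\Gamma(2-\alpha)$, the formula \eqref{eq:L1coeff} says $\omega_0=b_0$ and $\omega_j=b_j-b_{j-1}$ for $j\ge1$; thus (i) is immediate, and (iii) follows by telescoping, $\omega_0+\sum_{j=1}^{N}\omega_j=b_N\to 0$, so in fact equality holds, consistent with $\|a\|_{\ell^1}=\infty$. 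Consistency of the scheme in the sense of Definition~\ref{def:consistency} is classical (see \cite{jin2015analysis}); it also follows from $b_j\sim \Gamma(1-\alpha)^{-1}j^{-\alpha}$ via $F_\omega(z)=(1-z)F_b(z)\sim(1-z)^{\alpha}$ near $z=1$.

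The crux is condition (ii). Since $\omega_{k+1}=b_{k+1}-b_k$, we have $-\omega_{k+1}=b_k-b_{k+1}=\bigl((I-E)b\bigr)_k$, i.e. $(-\omega_1,-\omega_2,\dots)$ is the forward-difference sequence of $b=(b_0,b_1,\dots)$. Because $\bigl((I-E)^j(I-E)b\bigr)_k=\bigl((I-E)^{j+1}b\bigr)_k$, it suffices to prove that $b$ itself is a $\mathcal{CM}$ sequence, and I would do this by exhibiting a Hausdorff moment representation. Differentiating $y^{1-\alpha}/\Gamma(2-\alpha)$ gives
\[
b_j=\frac{1}{\Gamma(1-\alpha)}\int_j^{j+1}y^{-\alpha}\,dy=h(j),\qquad h(x):=\frac{1}{\Gamma(1-\alpha)}\int_0^1(x+s)^{-\alpha}\,ds.
\]
For each fixed $s\in[0,1]$ the map $x\mapsto(x+s)^{-\alpha}$ is completely monotone on $(0,\infty)$, hence so is the average $h$, and $h(0^+)=1/\Gamma(2-\alpha)<\infty$. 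By Bernstein's theorem $h(x)=\int_{[0,\infty)}e^{-xt}\,d\nu(t)$ for a finite nonnegative measure $\nu$; pushing $\nu$ forward along $t\mapsto e^{-t}$ produces a finite nonnegative measure $\mu$ on $[0,1]$ with $b_j=h(j)=\int_{[0,1]}s^j\,d\mu(s)$. Thus $b$ is a Hausdorff moment sequence, hence $\mathcal{CM}$, and therefore so is $(I-E)b=(-\omega_1,-\omega_2,\dots)$. Equivalently, inserting $(x+s)^{-\alpha}=\Gamma(\alpha)^{-1}\int_0^\infty t^{\alpha-1}e^{-(x+s)t}\,dt$ one reads off $b_j=\int_0^\infty \frac{t^{\alpha-2}(1-e^{-t})}{\Gamma(\alpha)\Gamma(1-\alpha)}\,e^{-jt}\,dt$ and recovers the moment measure by $s=e^{-t}$, or one checks via Lemma~\ref{lem:generating}(3) that $F_b$ is a Pick function that is nonnegative and analytic on $(-\infty,1)$.

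With (i)--(iii) established, Theorem~\ref{thm:CMequivalent} yields that $a=\omega^{(-1)}$ is $\mathcal{CM}$, which together with consistency is precisely the assertion that the $\mathcal{L}1$ scheme is $\mathcal{CM}$-preserving. I expect the only genuine obstacle to be the middle step: recognizing $(-\omega_1,-\omega_2,\dots)$ as a difference of the auxiliary sequence $b$ and certifying that $b$ is completely monotone; the Bernstein/Laplace representation above is the cleanest certificate, and everything else is routine bookkeeping with the explicit coefficients \eqref{eq:L1coeff}.
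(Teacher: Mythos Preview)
Your proposal is correct and follows essentially the same route as the paper: verify the three hypotheses of Theorem~\ref{thm:CMequivalent} for the $\mathcal{L}1$ weights, with the crux being that $(-\omega_1,-\omega_2,\dots)=(I-E)b$ and that $b$ is a $\mathcal{CM}$ sequence. The only difference is in the certificate for $b$: the paper simply observes that $b_j=\int_{j}^{j+1}\frac{t^{-\alpha}}{\Gamma(1-\alpha)}\,dt$ is the uniform-mesh integral of a $\mathcal{CM}$ function and asserts this makes $b$ a $\mathcal{CM}$ sequence, whereas you spell this out via Bernstein's theorem and the Hausdorff moment representation---a more detailed but equivalent justification of the same fact.
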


\begin{proof}
As pointed out in \eqref{eq:signconsis}, one can directly check that $\omega_{0}>0$, 
$\omega_{j}<0$ for $j\geq 1$ and $\omega_0+\sum_{j=1}^{\infty}\omega_{j}=0$. 
We now verify that the sequence $(-\omega_1, -\omega_2, \cdots)$
 given in \eqref{eq:L1coeff} is $\mathcal{CM}$.
 In fact, from \eqref{eq:L1scheme} we know that the sequence $b=(b_0, b_1, b_2,\cdots)$ is the integral for the $\mathcal{CM}$ function $\frac{t^{-\alpha}}{\Gamma(1-\alpha)}$ on uniform mesh. That is 
 \[
 b_j=\int_{t_j}^{t_{j+1}}\frac{t^{-\alpha}}{\Gamma(1-\alpha)}\,dt,
 \] 
 so it is a $\mathcal{CM}$ sequence. 
 Then, $ \omega_{j}=b_j-b_{j-1},~j=1,2,\cdots.$
 By the definition of $\mathcal{CM}$ sequence (equation \eqref{eq:CMsequencedef}), we find $(-\omega_1, -\omega_2, \cdots)=(I-E)b$ is also a $\mathcal{CM}$ sequence. 
 Hence, by Theorem \ref{thm:CMequivalent}, the convolution inverse of $\omega$ is a $\mathcal{CM}$ sequence.
 
Lastly, it is well known that $\mathcal{L}$1 scheme is consistent and thus
\[
F_a(z)\sim (1-z)^{-\alpha},~z\to 1.
\]
In fact, this can also be proved by the aysmptotic behavior of $\omega_j$. We omit the details. This means that $\mathcal{L}$1 scheme is $\mathcal{CM}$-preserving.
 \end{proof}

\subsubsection{A scheme based on piecewise interpolation}

 Another scheme is the one in \cite{li2019discretization}. Consider the discretization of the Volterra integral form (\ref{eq2}) by approximating $f$ with piecewise constant functions, where the sequence $a$ is obtained from discretizing the
integral directly. More precisely, due to homogeneity, 
\[
a_n=h^{-\alpha}\int_{t_n}^{t_{n+1}}k_{\alpha}(s)\,ds
=\int_n^{n+1}k_{\alpha}(s)\,ds.
\]
And it can be explicitly obtained 
\[
a=(a_{0}, a_{1}, ..., a_{n},....)=\frac{1}{\Gamma(1-\alpha)} \left(1, 2^{\alpha}-1, ..., (n+1)^{\alpha}-n^\alpha,...\right).
\]
Since $t^{\alpha-1}$ is completely monotone, the sequence is as well.
Hence, the scheme \eqref{eq22} with $\{\omega_j\}=a^{(-1)}$ is $\mathcal{CM}$-preserving for \eqref{eq1}.

\subsubsection{A class of convolutional quadrature schemes}

 Consider the convolutional quadrature (CQ) proposed by Lubich \cite{lubich1986,lubich1988CQ}. 
The linear multistep methods for ODE $u'(t)=f(t, u(t))$ reads that 
\[
\sum_{j=0}^{k}\alpha_{j}u_{n+j-k}=h\sum_{j=0}^{k}\beta_{j}f_{n+j-k}.
\]
Let $\rho(z)=\sum_{j=0}^{k}\alpha_{j}z^j, \sigma(z)=\sum_{j=0}^{k}\beta_{j}z^j$ denote the generating polynomials.
The corresponding reflected polynomials \cite{lubich1983stability}
\begin{gather}\label{eq26}
\begin{split}
\breve{\rho}(z)=z^k \rho(z^{-1})=\alpha_{0}z^k+\cdots+\alpha_{k-1}z+\alpha_{k},\\
\breve{\sigma}(z)=z^k \sigma(z^{-1})=\beta_{0}z^k+\cdots+\beta_{k-1}z+\beta_{k}.
\end{split}
\end{gather}
The generating function in CQ approximating the Riemann-Liouville fractional integral \cite{lubich1986,lubich1988CQ} can be written 
\[
F_a(z)=K (\delta(z))=(\delta(z))^{-\alpha},
\]
where $K$ is the Laplace transform of the standard kernel $k_{\alpha}(t)$ and $\delta(z)=\breve{\rho}(z)/\breve{\sigma}(z)$.
Note that the GL scheme can be seen the fractional generation of back Euler method. In this scheme, we have 
$\rho(z)=z-1$ and $\sigma(z)=z$, and that $\delta(z)=\breve{\rho}(z)/\breve{\sigma}(z)=1-z$, 
which yields that $F_a(z)=(\delta(z))^{-\alpha}=(1-z)^{-\alpha}$. This is completely consistent with the formula in \eqref{eq25}.
The $\theta$-method with parameter $\theta (\theta\geq 1)$ for ODEs $u'(t)=f(t, u(t))$ reads 
$u_{n+1}=u_{n}+h((1-\theta)f_{n}+\theta f_{n+1})$. The corresponding characteristic polynomials $\rho(z)=z-1$ and $\sigma(z)=\theta z+(1-\theta)$.
For any $\theta\geq 1$, this method satisfies the consistent condition: $\rho(1)=0$ and $\rho'(1)=\sigma(1)=1$,
and $(-\infty, 0]\in \mathcal{S}_{\theta}$, where $\mathcal{S}_{\theta}$ denotes the stability region of the scheme. 
The generating function
\[
\delta(z)=\frac{1-z}{\theta +(1-\theta) z}.
\]

It is not hard to verify that for such CQ schemes, the generating function $F_a(z)$ is Pick. To do that, we write
\[
F_a(z)=\left( \frac{\theta +(1-\theta) z}{1-z} \right)^{\alpha}
:=(G(z))^{\alpha}.
\]
We claim the function $G$ is Pick. In fact,
\[
G(z)=\frac{\theta +(1-\theta) z}{1-z} =\frac{(\theta+(1-\theta)z)(1- \bar{z})}{|1- z|^2}=
\frac{\theta-\theta\bar{z}+(1-\theta)z-(1-\theta)|z|^2} {|1-z|^2},
\]
which implies that $\mathrm{Im}(G)=\mathrm{Im}( \frac{z}{|1- z|^2})$, and the result follows.
On the other hand, 
\[
\lim_{z\to -\infty}G(z)=\theta -1,
\]
which is non-negative for $\theta \geq1$. With this, when $z\in (-\infty, 1)$, $G(z)=\frac{1+(\theta-1)(1-z)}{1-z}>0$.
Hence, if $\theta\ge 1$, $G(z)$ is a Pick function that is analytic and positive on $(-\infty, 1)$ and consequently, $F_a(z)$ is also Pick and nonnegative on $(-\infty, 1)$.

As a byproduct, we know from Lemma \ref{lem:generating} that when $0\leq\theta<1$, the corresponding CQ generated by $\theta$ method is not $\mathcal{CM}$-preserving. In particular, the fractional trapezoidal method, where $\theta=1/2$, is not $\mathcal{CM}$-preserving.

\subsubsection{A comment on computation of the weights}

To close this section, we now give some comments to the computation on the weights in the expansion of $F_{\omega}(z)=\sum_{n=0}^{\infty} \omega_{n}z^{n}$. 
In general, it is not easy to evaluate the weight $\omega_{n}$ in the fractional formal power series of some polynomials. But in our case, the following Miller formula is an efficient tool. 
\begin{lemma}\label{lem:FPS}
(\cite{galeone2008fractional}) 
Let $\phi(\xi)=1+\sum_{n=1}^{\infty}c_{n}\xi^n$ be a formal power series. Then for any $\alpha\in\mathbb{C}$, 
$(\phi(\xi))^{\alpha}=\sum_{n=0}^{\infty}v_{n}^{(\alpha)}\xi^n$, where the coefficients $v_{n}^{(\alpha)}$ can be recursively evaluated as
\[
v_{0}^{(\alpha)}=1,~~ v_{n}^{(\alpha)}=\sum_{j=1}^{n}\left( \frac{(\alpha+1)j}{n}-1\right)c_{j} v_{n-j}^{(\alpha)}.
\]
\end{lemma}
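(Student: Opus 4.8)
The plan is to prove the recurrence by exploiting the first-order differential relation satisfied by $\psi(\xi):=(\phi(\xi))^{\alpha}$, namely $\phi(\xi)\psi'(\xi)=\alpha\,\phi'(\xi)\,\psi(\xi)$, and then to read off the recursion by comparing coefficients of $\xi^{n-1}$ on the two sides. First I would set $c_0:=1$, so that $\phi(\xi)=\sum_{n\ge0}c_n\xi^n$, and observe that, since $\phi(0)=1$, the power series $\psi=\phi^{\alpha}$ is well defined with $\psi(0)=1$; this gives $v_0^{(\alpha)}=1$, the base case. Differentiating $\psi=\phi^{\alpha}$ gives $\psi'=\alpha\phi^{\alpha-1}\phi'$, and multiplying through by $\phi$ produces $\phi\psi'=\alpha\phi'\psi$, which is an identity between honest formal power series and never requires inverting $\phi$.

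Next I would extract the coefficient of $\xi^{n-1}$ from both sides of $\phi\psi'=\alpha\phi'\psi$. Writing $\psi'(\xi)=\sum_{m\ge1}m\,v_m^{(\alpha)}\xi^{m-1}$ and $\phi'(\xi)=\sum_{k\ge1}k\,c_k\xi^{k-1}$, the left-hand side contributes $\sum_{m=1}^{n}m\,v_m^{(\alpha)}c_{n-m}$ and the right-hand side contributes $\alpha\sum_{k=1}^{n}k\,c_k\,v_{n-k}^{(\alpha)}$. Isolating the $m=n$ term on the left (where $c_0=1$) yields $n\,v_n^{(\alpha)}=\alpha\sum_{k=1}^{n}k\,c_k\,v_{n-k}^{(\alpha)}-\sum_{m=1}^{n-1}m\,v_m^{(\alpha)}c_{n-m}$; re-indexing the last sum by $j=n-m$ (the $j=n$ term vanishing) rewrites it as $\sum_{j=1}^{n}(n-j)\,c_j\,v_{n-j}^{(\alpha)}$, so that $n\,v_n^{(\alpha)}=\sum_{j=1}^{n}\bigl((\alpha+1)j-n\bigr)c_j\,v_{n-j}^{(\alpha)}$. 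Dividing by $n$ gives exactly the stated formula. This portion is pure bookkeeping and carries no real difficulty.

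The only genuinely delicate point is the foundational one in the first step: for a general $\alpha\in\mathbb{C}$ one must make sense of $\phi^{\alpha}$ as a formal power series and be entitled to the chain rule $\tfrac{d}{d\xi}\phi^{\alpha}=\alpha\phi^{\alpha-1}\phi'$. I would dispatch this by defining, for a formal series $\phi$ with $\phi(0)=1$, $\phi^{\alpha}:=\exp\bigl(\alpha\operatorname{Log}\phi\bigr)$, where $\operatorname{Log}(1+\eta)=\sum_{k\ge1}\tfrac{(-1)^{k+1}}{k}\eta^{k}$ and $\exp(\eta)=\sum_{k\ge0}\tfrac{\eta^{k}}{k!}$ (both compositions legitimate because $\eta$ has zero constant term). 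The usual formal identities $\exp(\mu+\nu)=\exp(\mu)\exp(\nu)$, $(\exp\eta)'=\eta'\exp\eta$ and $\bigl(\operatorname{Log}(1+\eta)\bigr)'=\eta'/(1+\eta)$ then yield $(\phi^{\alpha})'=\alpha\phi^{\alpha-1}\phi'$ and hence $\phi\psi'=\alpha\phi'\psi$. Since this is entirely classical, in the write-up I would simply cite it and proceed directly to the coefficient comparison; the main ``obstacle'' is thus organizational rather than mathematical.
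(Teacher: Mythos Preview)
Your proof is correct. The paper itself does not prove this lemma; it simply records it as a known result (attributed to \cite{galeone2008fractional}) and proceeds to use it. So there is no ``paper's own proof'' to compare against here. Your argument via the logarithmic-derivative identity $\phi\,\psi'=\alpha\,\phi'\,\psi$ followed by coefficient extraction is the standard derivation of the Miller recurrence, and every step you wrote checks out, including the re-indexing and the handling of the $j=n$ boundary term. The foundational remark about defining $\phi^{\alpha}$ via $\exp(\alpha\operatorname{Log}\phi)$ for formal power series with constant term $1$ is the right way to make the chain rule rigorous for arbitrary $\alpha\in\mathbb{C}$; as you say, it is classical and can safely be cited rather than spelled out.
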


Applying this lemma to the formal power series $(1\pm \xi)^\alpha=\sum_{n=0}^{\infty} \omega_{n}\xi^n$ leads to that 
\[
\omega_0=1,~~ \omega_{n}=\pm \left( \frac{(\alpha+1)}{n}-1\right) \omega_{n-1}, ~n\geq 1.
\]

With this formula and the property for the generating functions $F_{v^{(-1)}}(z)=(F_{v}(z))^{-1}$ given in Lemma \ref{lem:generating},  
We can easily calculate the weight coefficients for the schemes given in this section.

\section{Stability regions for $\mathcal{CM}$-preserving schemes}
\label{sec:stability}

It is a fundamental problem to study the stability and stability regions of numerical schemes. For the convolution quadrature approximating fractional integral based on linear multistep methods developed by Lubich \cite{lubich1985fractional,lubich1988CQ}, the stability regions were fully identified due to the inherent advantages of this kind of algorithm. The $\mathcal{L}$1 scheme can be seen a fractional generalization of backward Euler method of ODEs,  
which has been studied in various ways due to its ease of implementation, good numerical stability and acceptable computational accuracy \cite{jin2015analysis,yan2018analysis,liao2019discrete,stynes2017error,kopteva2019error}. 
%However, as far as we know, the linear stable domain of the $\mathcal{L}$1 scheme is still unclear. 
%One of the main difficulties in studying 
The stability analysis for $\mathcal{L}$1 scheme is slightly more difficult. % due to its generating function is relatively complicated.
The generating functions of $\omega$ for $\mathcal{L}$1 scheme is given by 
\begin{gather}\label{eq:generatingfunL1}
F_\omega(z)=\sum_{n=0}^{\infty} \omega_n z^n=\left(\frac{1}{z}-2+z\right) \mathrm{Li}_{\alpha-1}(z), 
\end{gather}
where $\mathrm{Li}_{p}(z)$ stands for the polylogarithm function defined by $\mathrm{Li}_{p}(z)=\sum_{k=1}^{\infty}\frac{z^k}{k^p}$.
The $\mathrm{Li}_{p}(z)$ function is well defined for $|z|<1$ and can be analytically continued to the split complex $\mathbb{C}\setminus [1, \infty)$.
%It's very challenging to analyze function $F_\omega(z)$ directly as it involves the polylogarithm function, especially to determine the numerical stability domain. 
Jin et.al. \cite{jin2015analysis} proved the stability domain $\mathcal{S}_{\mathcal{L}1}$ for $\mathcal{L}$1 scheme is $A(\pi/4)$-stable by analyzing the function $F_\omega(z)$ directly.  See the definition below in \eqref{eq:numstab}.
Since $\mathcal{L}$1 scheme can be seen a fractional extension of the backward Euler scheme for classical ODEs and the backward Euler is $A$-stable, the above results in \cite{jin2015analysis} are not satisfactory and should be able to be improved. In \cite{jin2018discrete}, Jin et.al. further proved the $\mathcal{L}$1 scheme is $A((1-\alpha/2)\pi)$-stable, that is fractional $A$-stable, by making use of a very elaborate expansion formula for the polylogarithm function.

%To do this, and to avoid analyzing the function $F_\omega(z)$ directly, 
%
%In the following, we consider a more general problem. 
%We prove that all $\mathcal{CM}$-preserving schemes are at least $A(\pi/2)$ stable, and the unstable region is only a bounded subset of the right semi complex plane. 
%%Consequently, the $\mathcal{L}$1 scheme is at least $A(\pi/2)$ stable.

In the following, we study the stability domain of general $\mathcal{CM}$-preserving schemes and prove that they are at least $A(\pi/2)$ stable.
The results will allow us to applied $\mathcal{CM}$-preserving schemes to time fractional advection-diffusion equations, in which the eigenvalues of the space semi-discrete system lie in the left half complex plane but with nonzero imaginary part.
For the linear scalar test fractional ODE:
\begin{gather}\label{eq:linearode}
\mathcal{D}_c^{\alpha}u(t) =\lambda u(t)
\end{gather}
subject to $u(0)=u_{0}$ and $\lambda\in\mathbb{C}$, the true solution can be expressed as $u(t)=E_{\alpha}(\lambda t^{\alpha})u_0$, where $E_{\alpha}(z)=\sum_{k=0}^{\infty}\frac{z^k}{\Gamma(k\alpha +1)}$ is the Mittag-Leffler function. It is proved in \cite{lubich1986stability} that the solution satisfies that $u(t)\to 0$ as $t\to+\infty$
whenever 
\begin{gather}\label{eq:anastab}
\lambda\in \mathcal{S}^{*}:=\{z\in\mathbb{C}; z\neq0, |\arg(z)|>(\pi\alpha)/2 \}.
\end{gather}
Recall that the function $z\mapsto \arg(z)$ we use here has branch cut at the negative real axis and the range is in $(-\pi, \pi]$. 
Note that the stability region $\mathcal{S}^{*}$ for the true solution do not contain the point $z=0$. So does the numerical stability region $\mathcal{S}_h$ below.

Consider applying the $\mathcal{CM}$-preserving scheme with coefficients $a=(a_0, a_1,\cdots)$ to \eqref{eq:linearode} to obtain that
\begin{gather}\label{eq:linearstability}
u_n=u_0+\lambda h^{\alpha} [a*(u-u_0\delta_d)]_n,~~n\ge 0.
\end{gather}

\begin{definition}
The numerical stability region is defined by  
\begin{gather}\label{eq:numstab}
\mathcal{S}_h:=\{z=\lambda h^{\alpha}\in \mathbb{C}:  u_n\to 0~ \hbox{as}~ n\to +\infty \}.
\end{gather}
The numerical method is called $A(\beta)$-stable if the corresponding stability domain $\mathcal{S}_h$ contains the infinite wedge 
\begin{gather}\label{eq:Stheta}
S(\beta)=\{z\in\mathbb{C}; z\neq0, |\arg(-z)|<\beta \}.
\end{gather}
\end{definition}
We use $\arg(-z)$ here in order that the angle $\beta$ is counted from the negative real axis.
It is easy to find the generating function of the numerical solution sequence $\{u\}$ in \eqref{eq:linearstability} is given by
\begin{gather}\label{eq:genefun}
F_u(z)=u_0\frac{(1-z)^{-1}-\lambda h^{\alpha} F_a(z)}{1-\lambda h^{\alpha} F_a(z)}
=u_0 \left[1+\frac{z}{(1-\lambda h^{\alpha}F_a(z))(1-z)}\right].
\end{gather}
On the other hand, by Proposition \ref{pro:propertyomega}, for a $\mathcal{CM}$-preserving scheme 
\begin{gather}\label{eq:Fazasymp}
F_a(z)\sim (1-z)^{-\alpha},~z\to 1.
\end{gather}
Hence, if we can show 
\[
F_1(z):=\frac{z}{(1-\lambda h^{\alpha}F_a(z))(1-z)}
\]
is analytic in the region 
\begin{gather}
\Delta_{R,\theta}:=\{z\in\mathbb{C}: |z|\le R, z\neq 1, ~|\arg(z-1)|>\theta \}
\end{gather}
for some $R>1$ and $\theta\in (0, \frac{\pi}{2})$, then from Lemma \ref{lem:generating} we can find that
if $\lambda\neq 0$
\[
u_n\sim -\frac{u_0}{\lambda}h^{-\alpha}n^{-\alpha}\to 0,~~n\to +\infty.
\]
Hence, the domain 
\begin{gather}\label{eq:sdomain}
\mathcal{S}_1:=\left\{\zeta \in \mathbb{C}, \zeta\neq 0: \exists R>1, \theta\in \left(0, \frac{\pi}{2}\right), s.t.~  1-\zeta F_a(z) \neq 0,~\text{for } z
\in \Delta_{R,\theta} \right\}
\end{gather}
is contained in the stability region $\mathcal{S}_h$, i.e., $\mathcal{S}_1\subseteq \mathcal{S}_h$.
%\tcr{Note that $\mathcal{S}_1$ contains $\{0\}$.}

Let us start with region $\mathcal{S}_1$.
For the $\mathcal{CM}$ scheme, we have

\begin{lemma}\label{lem:complement}
Consider a scheme in \eqref{eq22} that is $\mathcal{CM}$-preserving. We have
\begin{gather}\label{eq:sdomainc}
\mathcal{S}_1^c=F_{\omega}\left(\overline{D(0, 1)} \right),
\end{gather}
where $\mathcal{S}_1$ is defined in \eqref{eq:sdomain}, $\omega=a^{-1}$ so that $F_{\omega}(z)=F_a^{-1}(z)$,  $\mathcal{S}_1^c$ is the complement of $\mathcal{S}_1$ and $D(0,1):=\{z\in \mathbb{C}:  |z|<1 \}$ is the open unit disk so that $\overline{D(0,1)}$ is the closed disk.
\end{lemma}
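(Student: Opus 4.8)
The goal is to identify the complement $\mathcal{S}_1^c$ with the image $F_\omega(\overline{D(0,1)})$. The plan is to unwind the definition of $\mathcal{S}_1$ in \eqref{eq:sdomain}: a nonzero $\zeta$ fails to lie in $\mathcal{S}_1$ precisely when for \emph{every} admissible pair $(R,\theta)$ with $R>1$, $\theta\in(0,\pi/2)$, the function $1-\zeta F_a(z)$ has a zero somewhere in $\Delta_{R,\theta}$. Since $F_a = 1/F_\omega$ and $F_\omega$ has no zeros off the unit disk where $F_a$ is analytic, $1-\zeta F_a(z)=0$ is equivalent to $F_\omega(z)=\zeta$. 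So the task reduces to: $\zeta\in\mathcal{S}_1^c$ iff for every $(R,\theta)$ there exists $z\in\Delta_{R,\theta}$ with $F_\omega(z)=\zeta$. I would first observe that $F_\omega$ is analytic on the closed unit disk (it is a power series with coefficients summing to an absolutely convergent series by Proposition \ref{pro:propertyomega}, since $\omega_j\sim \frac{1}{\Gamma(-\alpha)}j^{-1-\alpha}$ is summable), so $F_\omega(\overline{D(0,1)})$ is a compact set, and $F_\omega(1)=0$.

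For the inclusion $F_\omega(\overline{D(0,1)})\subseteq\mathcal{S}_1^c$: take $\zeta=F_\omega(z_0)$ for some $z_0\in\overline{D(0,1)}$. If $|z_0|<1$, then $z_0\in\Delta_{R,\theta}$ for every admissible $(R,\theta)$ (the disk of radius $R>1$ contains $z_0$, and $z_0\ne 1$, and any ray condition on $\arg(z-1)$ is vacuous since it only excludes a wedge near $z=1$ of aperture less than $\pi$ — one should check $\overline{D(0,1)}\setminus\{1\}\subseteq\{z:|\arg(z-1)|>\theta\}$, which holds because the unit disk sits in the half-plane $\mathrm{Re}(z-1)\le 0$ so $|\arg(z-1)|\ge\pi/2>\theta$). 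Hence $1-\zeta F_a(z_0)=0$ in every $\Delta_{R,\theta}$, so $\zeta\notin\mathcal{S}_1$. The boundary case $|z_0|=1$, $z_0\ne 1$ is handled the same way since such $z_0$ also lies in every $\Delta_{R,\theta}$; the case $z_0=1$ gives $\zeta=0$, which is excluded from both sets by convention. For the reverse inclusion $\mathcal{S}_1^c\subseteq F_\omega(\overline{D(0,1)})$: suppose $\zeta\neq 0$ and $\zeta\notin F_\omega(\overline{D(0,1)})$; since the latter is compact, $\dist(\zeta, F_\omega(\overline{D(0,1)}))=:\delta>0$. By continuity of $F_\omega$ on a slightly larger closed disk (it is analytic on $|z|<$ something $>1$ — here I'd invoke that $F_\omega(z)=(1+o(1))(1-z)^\alpha$ near $1$ and analyticity elsewhere, or more carefully use that $a$ is $\mathcal{CM}$ so $F_a$, hence $F_\omega$, extends analytically past the unit circle except possibly at $z=1$), one can choose $R>1$ close to $1$ and $\theta\in(0,\pi/2)$ so that $\Delta_{R,\theta}\subseteq\{|z|<R\}$ stays within a neighborhood where $|F_\omega(z)-\zeta|\ge\delta/2>0$, using that near the only bad point $z=1$ we have $F_\omega(z)\to 0\ne\zeta$. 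Then $F_\omega(z)\ne\zeta$ throughout $\Delta_{R,\theta}$, i.e., $1-\zeta F_a(z)\ne 0$ there, so $\zeta\in\mathcal{S}_1$.

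The main obstacle I anticipate is the careful handling of the behavior near $z=1$ in the reverse inclusion: one must make the region $\Delta_{R,\theta}$ shrink toward the unit disk so that it avoids introducing spurious points where $F_\omega$ could approach $\zeta$, and simultaneously confirm that $F_a(z)=F_\omega(z)^{-1}\sim(1-z)^{-\alpha}$ blows up near $z=1$ (so $1-\zeta F_a(z)$ is automatically nonzero in a punctured neighborhood of $1$) — this is exactly where \eqref{eq:Fazasymp} from Proposition \ref{pro:propertyomega} is used. A secondary subtlety is justifying that the ray-exclusion wedge in $\Delta_{R,\theta}$ does not remove any point of the closed unit disk other than $z=1$; this is the geometric fact that $\overline{D(0,1)}$ lies in the half-plane $\{\mathrm{Re}\,w\le 0\}$ after translating by $1$, so every point of it other than $1$ has argument-from-$1$ of modulus at least $\pi/2$. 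Once these two points are nailed down, the double inclusion closes and the lemma follows; I would also remark that $\mathcal{S}_1^c$ being compact and containing $0$ in its closure (as $F_\omega(1)=0$) is consistent with the later claim that the complement of the stability region is a bounded subset of the right half-plane.
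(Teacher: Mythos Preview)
Your proposal is correct and follows essentially the same route as the paper. The forward inclusion is identical: both arguments use that $\overline{D(0,1)}\setminus\{1\}\subset\Delta_{R,\theta}$ for every admissible $(R,\theta)$. For the reverse inclusion the paper argues by contradiction via a sequence $z_m\in\Delta_{R_m,\theta_m}$ with $F_\omega(z_m)=\zeta_0$ and extracts a limit point on $\overline{D(0,1)}$, while you construct an admissible $(R,\theta)$ directly from compactness of $F_\omega(\overline{D(0,1)})$ and continuity of $F_\omega$ on a slightly enlarged region; these are equivalent packagings of the same two inputs, namely analyticity of $F_\omega$ past the unit circle away from $z=1$ and the asymptotic $F_\omega(z)\to 0$ as $z\to 1$ from \eqref{eq:Fazasymp}.

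One small correction: your remark that the case $z_0=1$, $\zeta=0$ is ``excluded from both sets by convention'' is not accurate. By the definition \eqref{eq:sdomain} the point $0$ is excluded from $\mathcal{S}_1$, hence $0\in\mathcal{S}_1^c$; and $0=F_\omega(1)\in F_\omega(\overline{D(0,1)})$. So $0$ belongs to both sides of \eqref{eq:sdomainc}, and the paper handles it exactly this way (``$\mathcal{S}_1^c$ contains $0$'' together with ``$F_\omega(1)=0$''). This does not affect the validity of your argument.
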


\begin{proof}
Since every $\Delta_{R,\theta}$ contains $\overline{D(0, 1)}\setminus\{1\}$
and $\mathcal{S}_1^c$ contains $0$, we must have 
\[
F_{\omega}\left(\overline{D(0, 1)} \setminus\{1\}\right) \subset \mathcal{S}_1^c.
\]
Since $F_{\omega}(1)=0$ by the asymptotic behavior of $F_a(z)$ in \eqref{eq:Fazasymp}, we thus conclude
\[
F_{\omega}\left(\overline{D(0, 1)}\right) \subset \mathcal{S}_1^c.
\]

On the other hand, for any $\zeta_0\notin F_{\omega}\left(\overline{D(0, 1)} \right)$ (thus $\zeta_0\neq 0$), we show that $\zeta_0\in \mathcal{S}_1$.
In fact, if not, for any $\Delta_{R_m,\theta_m}$, there exists $z_m\in \Delta_{R_m,\theta_m}$ such that
$F_{\omega}(z_m)=\zeta_0$. Consequently, we are able to find a sequence $\{z_m\}\subset F_{\omega}^{-1}(\zeta_0)$  with $z_i\neq z_j$ for $i\neq j$, and $|z_m|\to 1$. Hence, $\{z_m\}$ must have a limiting point $\bar{z}$. $\bar{z}\neq 1$ by \eqref{eq:Fazasymp}. Hence, $F_{\omega}(z)$ must be analytic around $\bar{z}$ so that $F_{\omega}(\bar{z})=\zeta_0$. This is a contradiction since $F_{\omega}(z)-\zeta_0$ is analytic, with zeros being isolated.
\end{proof}

From this lemma we can see that if we can prove some properties of the image of unit disk
under the map $F_{\omega}(z)=F_a^{-1}(z)=\omega_0-zG(z)$ for $z\in \overline{D(0,1)}$, where $G(z)$ is defined in \eqref{eq:Gfun}, we may get some information on the domain $\mathcal{S}_1$. With this observation, we have

\begin{theorem}\label{thm:stabilityregion}
Consider a $\mathcal{CM}$-preserving scheme for \eqref{eq1}. The complement of the numerical stability region 
$\mathcal{S}^c_h:=\mathbb{C}\setminus \mathcal{S}_h$ is a bounded set in the right half complex plane. 
There exists $\theta_0\in (0, \frac{\pi}{2})$ such that the numerical stability region $\mathcal{S}_h$ contains $S(\pi-\theta_0)$ defined in \eqref{eq:Stheta}, and also the wedge region 
\[
\bigcup_{\delta\le \delta_0} \left\{\zeta\in\mathbb{C}: |\zeta|\le \delta, |\arg(\zeta)|\ge \beta(\delta) \right\}
\]
for some small given positive constant $\delta_0>0$ and continuous function $\beta: [0, \delta_0]\to [0, \pi]$ such that $\beta(\delta)\to \frac{\alpha \pi}{2}$ as $\delta\to 0^{+}$.
In particular, the stability region contains the left half plane excluding $\{0\}$, i.e., $\mathcal{S}_h\supset \mathbb{C}^{-}\setminus \{0\}$, where $\mathbb{C}^{-}=\{\zeta\in\mathbb{C}:  \mathrm{Re} (\zeta)\leq0\}$.
%\footnote{\tcr{This will be confused with $\mathbb{C}_+$ before, so we must define them consistently}}.
\end{theorem}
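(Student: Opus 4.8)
The plan is to reduce the whole statement to the geometry of the planar set $F_{\omega}\big(\overline{D(0,1)}\big)$. By Lemma~\ref{lem:complement} we have $\mathcal{S}_1^c=F_{\omega}\big(\overline{D(0,1)}\big)$, and since $\mathcal{S}_1\subseteq\mathcal{S}_h$ we get $\mathcal{S}_h^c\subseteq F_{\omega}\big(\overline{D(0,1)}\big)$; hence it is enough to show that this image set is bounded, is contained in the closed right half plane, meets $\{\mathrm{Re}\,\zeta\le 0\}$ only at $0$, and admits the two angular bounds. Boundedness is immediate: by Proposition~\ref{pro:propertyomega} one has $\omega_j\sim\frac{1}{\Gamma(-\alpha)}j^{-1-\alpha}$, so $\sum_j|\omega_j|<\infty$, $F_{\omega}$ is continuous on $\overline{D(0,1)}$, and its image is compact. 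Note also that $F_{\omega}(1)=0$, by the asymptotics $F_{\omega}(z)=(1+o(1))(1-z)^{\alpha}$ of Proposition~\ref{pro:propertyomega}.

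The crucial estimate is $\mathrm{Re}\,F_{\omega}(z)\ge 0$ for all $z\in\overline{D(0,1)}$, with strict inequality except at $z=1$. Here I would use Theorem~\ref{thm:CMequivalent}: the sequence $(-\omega_1,-\omega_2,\dots)$ is $\mathcal{CM}$, hence the moment sequence of a finite nonnegative Hausdorff measure $\nu$ on $[0,1]$, so that $G(z)=\int_0^1(1-sz)^{-1}\,d\nu(s)$ on $\overline{D(0,1)}\setminus\{1\}$ and, by monotone convergence together with $F_{\omega}(1)=0$, $\omega_0=G(1)=\int_0^1(1-s)^{-1}\,d\nu(s)$. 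Evaluating $F_{\omega}(z)=\omega_0-zG(z)$ at $z=e^{i\phi}$ and simplifying the integrand over the common denominator yields
\[
\mathrm{Re}\,F_{\omega}(e^{i\phi})=\omega_0-\int_0^1\frac{\cos\phi-s}{1-2s\cos\phi+s^2}\,d\nu(s)=\int_0^1\frac{(1+s)(1-\cos\phi)}{(1-s)\,(1-2s\cos\phi+s^2)}\,d\nu(s)\ \ge\ 0,
\]
strictly positive for $\phi\neq 0$ since $\nu\not\equiv 0$ (otherwise $F_a$ would be constant, contradicting consistency). For interior points the inequality follows from the minimum principle applied to the harmonic function $\mathrm{Re}\,F_{\omega}$, and is strict in $D(0,1)$ because $F_{\omega}$ is nonconstant. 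In particular $F_{\omega}\big(\overline{D(0,1)}\big)\cap\{\mathrm{Re}\,\zeta\le 0\}=\{0\}$, which already gives $\mathcal{S}_h\supseteq\mathbb{C}^-\setminus\{0\}$.

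For the angular statements I would bound $\arg F_{\omega}$. Near $z=1$: writing $w=1-z$, the constraint $|z|\le 1$ reads $\mathrm{Re}\,w\ge\tfrac12|w|^2\ge 0$, so $|\arg(1-z)|\le\pi/2$; combined with $F_{\omega}(z)=(1+o(1))(1-z)^{\alpha}$ this gives $|\arg F_{\omega}(z)|\le\alpha\pi/2+o(1)$ as $z\to 1$, which simultaneously fixes the asymptotic opening angle $\alpha\pi/2$ and provides a neighbourhood $U$ of $1$ on which $|\arg F_{\omega}|\le\alpha\pi/2+\varepsilon<\pi/2$. On the compact set $\overline{D(0,1)}\setminus U$, $F_{\omega}$ is continuous, nonvanishing (by the core estimate) and has $\mathrm{Re}\,F_{\omega}>0$, hence $|\arg F_{\omega}|\le\pi/2-\eta$ there; with $\theta_0:=\max(\alpha\pi/2+\varepsilon,\pi/2-\eta)\in(0,\pi/2)$ we obtain $F_{\omega}\big(\overline{D(0,1)}\big)\subseteq\{0\}\cup\{|\arg\zeta|\le\theta_0\}$, i.e. $\mathcal{S}_h\supseteq\mathcal{S}_1\supseteq S(\pi-\theta_0)$. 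For the wedge at the origin, put $\tilde\beta(\delta):=\sup\{|\arg F_{\omega}(z)|:z\in\overline{D(0,1)},\,0<|F_{\omega}(z)|\le\delta\}$; since $F_{\omega}$ is nonvanishing off $z=1$, the set $\{|F_{\omega}|\le\delta\}$ collapses into any prescribed neighbourhood of $1$ as $\delta\to 0$, so the near-$1$ analysis gives $\tilde\beta(\delta)\le\alpha\pi/2+o(1)$, while testing on boundary points $z=1-\rho e^{i\theta}$ with $\rho\to 0$ and $\theta=\arccos(\rho/2)\uparrow\pi/2$ forces $\tilde\beta(\delta)\ge\alpha\pi/2-o(1)$; hence $\tilde\beta(\delta)\to\alpha\pi/2$, and a routine construction furnishes a continuous nondecreasing $\beta>\tilde\beta$ on $(0,\delta_0]$ with $\beta(0^+)=\alpha\pi/2$, for which $\bigcup_{\delta\le\delta_0}\{|\zeta|\le\delta,\,|\arg\zeta|\ge\beta(\delta)\}$ avoids $F_{\omega}\big(\overline{D(0,1)}\big)$ and therefore lies in $\mathcal{S}_h$.

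The main obstacle is the core estimate $\mathrm{Re}\,F_{\omega}\ge 0$ on $\overline{D(0,1)}$: this is exactly where the $\mathcal{CM}$-preserving hypothesis enters in an essential way (via Theorem~\ref{thm:CMequivalent} and the Hausdorff representation of $G$), and producing the manifestly nonnegative integrand above hinges on the algebraic simplification together with the identity $\omega_0=\int_0^1(1-s)^{-1}d\nu(s)$, which in turn rests on $F_{\omega}(1)=0$. Everything else — boundedness, the two angle bounds, and the construction of $\beta$ — is comparatively soft once this positivity is in hand.
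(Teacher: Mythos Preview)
Your argument is correct, and the overall architecture---reduce everything to the geometry of $F_\omega(\overline{D(0,1)})$ via Lemma~\ref{lem:complement}, establish $\mathrm{Re}\,F_\omega\ge 0$ on the closed disk with equality only at $z=1$, then extract both angular bounds---matches the paper's. The difference is in how you obtain the core estimate. The paper uses the Hausdorff representation of the sequence $a$ itself, writing $F_a(z)=\int_{[0,1]}(1-tz)^{-1}\,d\mu(t)$; for any $z$ with $\mathrm{Re}\,z<1$ one reads off directly that $\mathrm{Re}\,F_a(z)=\int(1-t\,\mathrm{Re}\,z)|1-tz|^{-2}\,d\mu(t)>0$, and since $F_\omega=1/F_a$ the real parts share sign. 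This yields the positivity throughout the disk in one stroke, without passing to the boundary and invoking the minimum principle. You instead invoke Theorem~\ref{thm:CMequivalent} to get the Hausdorff representation of $G$, compute $\mathrm{Re}\,F_\omega$ explicitly on the unit circle, and then use harmonicity to push inward. Both routes are valid; the paper's is slightly shorter because it avoids the boundary computation and the identity $\omega_0=\int(1-s)^{-1}d\nu(s)$, while yours gives a pleasant closed-form integrand $(1+s)(1-\cos\phi)/[(1-s)(1-2s\cos\phi+s^2)]$ on $\partial D$ that makes the nonnegativity transparent. Your boundedness argument (continuity of $F_\omega$ on $\overline{D(0,1)}$ via $\sum|\omega_j|<\infty$) is also a cleaner variant of the paper's lower bound on $|F_a|$.
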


The proof of Theorem \ref{thm:stabilityregion} relies on the following key observation of a completely monotone sequence
and its generating function:
\begin{lemma}
(\cite[Theorem 1]{lp16}) If a sequence $\{a\}$ is $\mathcal{CM}$, then 
there is a Hausdorff measure $\mu$ (nonnegative, supported on $[0, 1]$) such that
\[
a_n=\int_{[0, 1]} t^n d\mu(t),
\]
and consequently,
\begin{gather}\label{eq:Faintegralform}
F_a(z)=\int_{[0, 1]}\frac{1}{1-zt} d\mu(t),
\end{gather}
which is Pick, nonnegative on $(-\infty, 1)$.
\end{lemma}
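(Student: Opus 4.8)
The statement to prove is the Liu--Pego representation: a $\mathcal{CM}$ sequence $\{a\}$ has the form $a_n=\int_{[0,1]}t^n\,d\mu(t)$ for some Hausdorff measure $\mu$, and consequently $F_a(z)=\int_{[0,1]}(1-zt)^{-1}\,d\mu(t)$ is Pick and nonnegative on $(-\infty,1)$. Since this is quoted verbatim from \cite[Theorem 1]{lp16}, the honest route is to cite it; but if one wants a self-contained argument, the plan is as follows. The representation $a_n=\int_{[0,1]}t^n\,d\mu(t)$ is precisely the classical Hausdorff moment problem, already recalled in the introduction of the excerpt (a sequence is $\mathcal{CM}$ iff it is the moment sequence of a finite nonnegative measure on $[0,1]$, per Widder \cite{widder41}). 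So I would first invoke that, treating $a_n=\int_{[0,1]}t^n\,d\mu(t)$ as given.

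The remaining work is to pass from the moment representation to the generating-function identity and its analytic consequences. First I would establish the formula for $F_a(z)$ on the open unit disk: for $|z|<1$, Fubini/Tonelli (justified since $\int_{[0,1]}\sum_{n\ge 0}|z|^n t^n\,d\mu(t)=\mu([0,1])/(1-|z|)<\infty$) gives
\begin{gather}
F_a(z)=\sum_{n=0}^\infty z^n\int_{[0,1]}t^n\,d\mu(t)=\int_{[0,1]}\sum_{n=0}^\infty (zt)^n\,d\mu(t)=\int_{[0,1]}\frac{1}{1-zt}\,d\mu(t).
\end{gather}
Next I would observe that the right-hand integral in fact defines an analytic function on the larger domain $\mathbb{C}\setminus[1,\infty)$: for $t\in[0,1]$ the map $z\mapsto (1-zt)^{-1}$ is analytic off $z=1/t\in[1,\infty)$, and on any compact subset of $\mathbb{C}\setminus[1,\infty)$ the integrand is uniformly bounded in $t$, so differentiation under the integral sign (Morera, or dominated convergence on difference quotients) yields analyticity there. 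Hence $F_a$ extends analytically past $(-\infty,1)$ and the series identity persists by uniqueness of analytic continuation.

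Finally I would verify the two structural properties. Nonnegativity on $(-\infty,1)$: for real $z<1$ and $t\in[0,1]$ one has $1-zt>0$ (if $z\le 0$ then $1-zt\ge 1$; if $0<z<1$ then $1-zt\ge 1-z>0$), so the integrand is nonnegative and thus $F_a(z)\ge 0$. The Pick property: for $\mathrm{Im}(z)>0$ and $t\in[0,1]$,
\begin{gather}
\mathrm{Im}\!\left(\frac{1}{1-zt}\right)=\frac{\mathrm{Im}(\overline{1-zt})}{|1-zt|^2}=\frac{t\,\mathrm{Im}(z)}{|1-zt|^2}\ge 0,
\end{gather}
so integrating against the nonnegative measure $\mu$ preserves the sign, giving $\mathrm{Im}(F_a(z))\ge 0$; combined with analyticity on $\mathbb{C}_+$ (a subset of $\mathbb{C}\setminus[1,\infty)$) this is exactly the Pick condition. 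The only genuinely nontrivial ingredient is the Hausdorff moment theorem itself, which I would simply cite; everything after it is routine interchange-of-limit and sign bookkeeping, so I expect no real obstacle beyond carefully justifying the Fubini step and the differentiation under the integral.
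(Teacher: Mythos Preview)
The paper does not give its own proof of this lemma; it is stated as a direct citation of \cite[Theorem~1]{lp16} (together with the classical Hausdorff moment theorem already recalled from \cite{widder41} in the introduction). Your proposal is correct and supplies a self-contained argument the paper omits: you invoke Hausdorff for the moment representation, justify the generating-function identity on $|z|<1$ by Fubini, extend analytically to $\mathbb{C}\setminus[1,\infty)$ by a standard domination argument, and then verify nonnegativity on $(-\infty,1)$ and the Pick property by direct computation of signs. There is nothing to compare against on the paper's side, and your derivation is sound.
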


 With the lemma, we now prove the main theorem of this part.

\begin{proof}[Proof of Theorem \ref{thm:stabilityregion}]

Since $a_0>0$, $\mu[0, 1]=a_0>0$.

We first show that $\mathcal{S}^c_h$ is bounded. 
Fix some $M>0$ large. Since $F_a(z)\sim (1-z)^{-\alpha}$ as $z\to 1$, for $\epsilon>0$ is small enough, in the domain $\overline{B(1, \epsilon)}\setminus[1,\infty)$, where $B(1,\epsilon):=\{\zeta\in\mathbb{C}: |\zeta-1|<\epsilon \}$, $|F_a(z)|>M$.
Note that on the region $D(0, 1)\setminus \overline{B(1,\epsilon)}$, $F_a(z)$ is an analytic function. Moreover, it is never zero since
it is a Pick function and positive on $(-\infty, 1)$ as $\mu[0, 1]>0$. Hence, $|F_a(z)|$ has a lower bound $C>0$. Hence, $\inf_{z\in \mathbb{C}\setminus(1,\infty)}|F_a(z)|>0$ and thus $\{\zeta: |\zeta|>C_1\}$ is contained in the stability region for some $C_1>0$ according to Lemma \ref{lem:complement}.

We now prove that $\mathcal{S}_h\supset S(\pi-\theta_0)$ (defined in \eqref{eq:Stheta}) for some $\theta_0\in (0, \frac{\pi}{2})$.
Consider $|z|\le R=1+\epsilon$.
If $\epsilon$ is very small, then $F_a(z)= (1+k(\epsilon))(1-z)^{-\alpha}$
for some function $k$ such that $k(\epsilon)\to 0$ as $\epsilon\to 0^+$. Hence, 
\begin{gather}\label{eq:argFa}
|\arg(F_a(z))| \le \alpha\frac{\pi}{2}+h(\epsilon),
\end{gather}
for some function $h$ satisfying that $h(\epsilon)\to 0$ as $\epsilon\to 0^+$.
When $z\in \overline{D(0,1)\setminus B(1,\epsilon)}$, then
%\[
$\mathrm{Re}(z)\le 1-\frac{1}{2}\epsilon^2<1.$
%\]
Using \eqref{eq:Faintegralform}, we know that $F_a(z)$ has positive real part,
so does $F_{\omega}(z)$.
Hence, we find that
\[
|\arg(F_\omega(z))| \le \frac{\pi}{2}-C(\epsilon),
\]
with $C(\epsilon)\to 0$  as $\epsilon\to 0^+$. 
Choosing suitable $\epsilon$, we further find
\begin{gather}\label{eq:argFw}
\sup_{z\in \overline{D(0,1)}\setminus \{1\}} |\arg(F_{\omega}(z))| \le  \theta_0 <\frac{\pi}{2}.
\end{gather}
Lemma \ref{lem:complement} then implies that the numerical stability region contains $S(\theta_0)$.

Regarding the last claim, we choose $\epsilon>0$ small and set $M_{\epsilon}=\sup_{z\in \overline{D(0,1)\setminus B(1, \epsilon)}} |F_{\omega}(z)|$. Then, for all $\zeta$ with $|\zeta|<1/M_{\epsilon}$, $F_{\omega}(z)=\zeta$ can only be possible for $z\in B(1, \epsilon)$. However, the phase of $F_{\omega}(z)=F_a^{-1}(z)$ in $B(1, \epsilon)$ is between $-(1+k(\epsilon))\frac{\pi \alpha}{2}$ and $(1+k(\epsilon))\frac{\pi \alpha}{2}$. 
This observation then leads to the claim regarding the asymptotic behavior of the stability region for $\zeta$ near the origin.
\end{proof}

As an immediate application of Theorem \ref{thm:stabilityregion}, we have the following.
\begin{corollary}\label{cor:stability}
Consider a $\mathcal{CM}$-preserving scheme for the test equation in \eqref{eq:linearode}.
If $|\arg(\lambda)|>\theta_0$, where $\theta_0$ is defined in Theorem \ref{thm:stabilityregion}, the scheme is unconditionally stable. If $|\arg(\lambda)|>\frac{\pi \alpha}{2}$, the scheme is stable for $h$ small enough.
\end{corollary}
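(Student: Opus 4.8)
The plan is to read off Corollary \ref{cor:stability} from Theorem \ref{thm:stabilityregion}, using only the elementary fact that multiplication by the positive real number $h^{\alpha}$ does not change the argument. Fix $\lambda\neq 0$. Then $\arg(\lambda h^{\alpha})=\arg(\lambda)$ for every $h>0$, while $|\lambda h^{\alpha}|=|\lambda|h^{\alpha}$ sweeps out all of $(0,\infty)$ as $h$ ranges over $(0,\infty)$. By the definition \eqref{eq:numstab} of $\mathcal{S}_h$, the scheme applied to \eqref{eq:linearode} is stable (i.e.\ $u_n\to 0$) exactly when $\lambda h^{\alpha}\in\mathcal{S}_h$, so everything reduces to locating the ray $\{r e^{i\arg\lambda}:r>0\}$ relative to the regions described in Theorem \ref{thm:stabilityregion}.

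For the first claim I would first record the branch-cut identity $|\arg(-z)|=\pi-|\arg(z)|$, valid for all $z\in\mathbb{C}\setminus\{0\}$ under the convention $\arg(\cdot)\in(-\pi,\pi]$; together with \eqref{eq:Stheta} this gives $z\in S(\pi-\theta_0)\iff|\arg(z)|>\theta_0$. Hence, if $|\arg(\lambda)|>\theta_0$, then $|\arg(\lambda h^{\alpha})|=|\arg(\lambda)|>\theta_0$, so $\lambda h^{\alpha}\in S(\pi-\theta_0)\subset\mathcal{S}_h$ for \emph{every} $h>0$, which is unconditional stability. For the second claim I would use the part of Theorem \ref{thm:stabilityregion} stating that $\mathcal{S}_h$ contains $\bigcup_{\delta\le\delta_0}\{\zeta:|\zeta|\le\delta,\ |\arg(\zeta)|\ge\beta(\delta)\}$ with $\beta(\delta)\to\alpha\pi/2$ as $\delta\to 0^{+}$. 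If $|\arg(\lambda)|>\alpha\pi/2$, set $\eta:=|\arg(\lambda)|-\alpha\pi/2>0$; since $\beta(\delta)\to\alpha\pi/2$ there is $\delta_1\in(0,\delta_0]$ with $\beta(\delta_1)<\alpha\pi/2+\eta=|\arg(\lambda)|$. Then whenever $h$ is small enough that $|\lambda|h^{\alpha}\le\delta_1$, the point $\zeta=\lambda h^{\alpha}$ satisfies $|\zeta|\le\delta_1$ and $|\arg(\zeta)|=|\arg(\lambda)|>\beta(\delta_1)$, hence $\zeta\in\mathcal{S}_h$ and $u_n\to0$, proving stability for $h$ small enough. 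The final assertion $\mathcal{S}_h\supset\mathbb{C}^{-}\setminus\{0\}$ of the theorem additionally yields the familiar fact that every $\lambda$ with $\mathrm{Re}(\lambda)\le 0$, $\lambda\neq0$, is unconditionally stable.

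There is essentially no obstacle here, since the corollary is a soft consequence of the theorem; the only points needing care are the argument bookkeeping (the identity $|\arg(-z)|=\pi-|\arg(z)|$ under the chosen branch, and the fact that $h^{\alpha}>0$ preserves arguments) and using the reduction ``$h$ small $\Leftrightarrow$ $|\lambda|h^{\alpha}$ small'' with the inequalities in the near-origin wedge oriented correctly. The substantive work was already spent in proving Theorem \ref{thm:stabilityregion}.
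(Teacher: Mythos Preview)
Your proposal is correct and matches the paper's treatment: the paper gives no proof at all, simply introducing the corollary with ``As an immediate application of Theorem \ref{thm:stabilityregion}, we have the following,'' so your short reduction via $\arg(\lambda h^{\alpha})=\arg(\lambda)$ and the inclusions $S(\pi-\theta_0)\subset\mathcal{S}_h$ and of the near-origin wedge in $\mathcal{S}_h$ is exactly the intended (and only) argument. Your bookkeeping on the branch-cut identity and the choice of $\delta_1$ is careful and correct.
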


Now a natural question is that whether the $\mathcal{CM}$-preserving schemes can be $A(\frac{\pi \alpha}{2})$ stable, that is, the numerical stability region contains the analytic stability region, $\mathcal{S}_h\supset S^{*}$, where $\mathcal{S}_h$ and $S^{*}$ are defined in \eqref{eq:anastab} and  \eqref{eq:numstab} respectively.
We point out that the above conjecture cannot be true in general. As a typical example, consider
\begin{gather}
F_a(z)=(1-z)^{-\alpha}+C\frac{1}{1-t_1 z},
\end{gather}
where $t_1$ is close to $1$. If the constant $C$ is large enough, the largest phase 
\[
\sup_{z\in \overline{D(0,1)}\setminus\{1\}}\arg(F_a(z))
\]
could be close to $\pi/2$. This function, however, also gives a consistent $\mathcal{CM}$-preserving scheme.

Hence, we can only hope some special scheme, like $\mathcal{L}$1 scheme, can achieve the better stability property.

\section{Monotonicity for scalar autonomous equations}
\label{sec:scalar}

It is noted that the solutions for classical first order autonomous one dimensional ODEs $u'=f(u)$ keeps the monotonicity, due to the facts of that the solution curves never cross the zeros of $f$ and hence $f(u)$ has a definite sign. In \cite{feng2018continuous}, the authors obtained a similar result for one dimensional autonomous fractional ODE
\begin{gather}\label{eq:autonomous}
\mathcal{D}_c^{\alpha}u=f(u),
\end{gather} 
where $t\mapsto u(t)\in \mathbb{R}$ is the unknown function.
\begin{lemma}[\cite{feng2018continuous}]\label{lem:monotonicity}
Consider the one dimensional autonomous fractional ODEs in \eqref{eq:autonomous}. Suppose that $f\in C^{1}(c, d)$ and $f'$ is locally Lipschitz on $(c, d)$. Then, the solution
$u$ with initial value $u(0)=u_0\in (c, d)$ is monotone on the interval of existence $(0, T_{max})$ ($T_{max}=\infty$ if the solution exists globally).  If $f(u_{0})\neq 0$, the monotonicity is strict. 
\end{lemma}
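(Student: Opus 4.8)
The plan is to treat the trivial equilibrium case separately and, in the nontrivial case, to (a) confine the trajectory between two consecutive zeros of $f$, so that $f(u(t))=\mathcal{D}_c^\alpha u(t)$ keeps a strict sign, and then (b) upgrade this sign information to genuine (strict) monotonicity of $u$. First, if $f(u_0)=0$, then $t\mapsto u_0$ solves the initial value problem, and since $f$ is locally Lipschitz the solution is unique, so $u\equiv u_0$ is (non-strictly) monotone. Hence we may assume $f(u_0)\neq0$; replacing $(u,f)$ by $(-u,-f(-\cdot))$ if needed, which preserves the equation because $\mathcal{D}_c^\alpha$ is linear, we may assume $f(u_0)>0$.

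For the confinement, set $Z_+:=\inf\{z>u_0: f(z)=0\}$ (interpreted as $d$ if the set is empty) and claim $u_0<u(t)<Z_+$ for all $t\in(0,T_{max})$. The tool is the pointwise identity, valid for $u\in C^1(0,T_{max})$,
\[
\Gamma(1-\alpha)\,\mathcal{D}_c^\alpha u(t)=\frac{u(t)-u_0}{t^\alpha}+\alpha\int_0^t\frac{u(t)-u(s)}{(t-s)^{1+\alpha}}\,ds,
\]
combined with a first-exit-time argument. From $u(t)=u_0+\frac{f(u_0)}{\Gamma(\alpha+1)}t^\alpha+o(t^\alpha)$ the trajectory enters $(u_0,Z_+)$ immediately. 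If $t_1$ were the first time with $u(t_1)=u_0$, then $u(s)>u(t_1)$ on $(0,t_1)$ makes the right-hand side at $t_1$ strictly negative, contradicting $\mathcal{D}_c^\alpha u(t_1)=f(u_0)>0$; and if $\tau$ were the first time with $u(\tau)=Z_+$, then $u(s)<u(\tau)$ on $[0,\tau)$ makes the right-hand side at $\tau$ strictly positive, contradicting $\mathcal{D}_c^\alpha u(\tau)=f(Z_+)=0$. Since $f(u_0)>0$ and $f$ has no zero on $[u_0,Z_+)$, the intermediate value theorem gives $f>0$ there, so $\mathcal{D}_c^\alpha u(t)=f(u(t))>0$ for every $t\in[0,T_{max})$.

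It remains to deduce strict monotonicity of $u$ from the fixed sign of $\mathcal{D}_c^\alpha u$, and this is the delicate point, since $\mathcal{D}_c^\alpha u\ge0$ does not by itself force $u$ to be nondecreasing; the autonomous structure and the a priori confinement must be used. By a standard bootstrap on $u=u_0+k_\alpha*f(u)$, using $f\in C^1$ with $f'$ locally Lipschitz, one gets $u\in C^1(0,T_{max})$, and differentiating shows $v:=u'$ solves the linear Volterra equation
\[
v(t)=k_\alpha(t)\,f(u_0)+\int_0^t k_\alpha(t-s)\,f'(u(s))\,v(s)\,ds,\qquad t>0,
\]
whose source term $k_\alpha(t)f(u_0)$ is strictly positive, so $v(t)=k_\alpha(t)f(u_0)(1+o(1))>0$ near $t=0$. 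One then shows $v$ cannot change sign: at a hypothetical first zero $t^*$ of $v$ one has $v>0$ on $(0,t^*)$ and $u((0,t^*))\subset(u_0,Z_+)$, and the identity $k_\alpha(t^*)f(u_0)=-\int_0^{t^*}k_\alpha(t^*-s)f'(u(s))v(s)\,ds$ has to be contradicted by controlling the resolvent kernel associated with $s\mapsto f'(u(s))$ on the compact interval $[0,t^*]$ (where $f$ is Lipschitz and bounded away from $0$). This gives $u'>0$ on $(0,T_{max})$, i.e.\ $u$ strictly increasing; the case $f(u_0)<0$ is symmetric and $f(u_0)=0$ was settled above, which yields both the monotonicity and the strictness exactly when $f(u_0)\neq0$. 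The main obstacle is precisely this last step: the kernel $k_\alpha(t-s)f'(u(s))$ is not sign-definite, so no off-the-shelf positivity or comparison principle applies directly, and one must exploit both that the forcing is exactly the sign-definite term $k_\alpha(t)f(u_0)$ and that the trajectory is trapped between consecutive zeros of $f$ — which is where the hypotheses $f\in C^1$ and $f'$ locally Lipschitz are genuinely needed.
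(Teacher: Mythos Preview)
Your proposal has a genuine gap exactly where you flag it: the first-exit-time argument for $v=u'$ does not close. At a putative first zero $t^*$ you get
\[
k_\alpha(t^*)f(u_0)=-\int_0^{t^*}k_\alpha(t^*-s)f'(u(s))v(s)\,ds,
\]
but since $f'(u(s))$ has no sign constraint (e.g.\ $f(u)=1-u^2$, $u_0=0$, where $f'(u(s))<0$ throughout), the right-hand side can be strictly positive and no contradiction arises. Saying one must ``control the resolvent kernel'' is the right instinct, but you have not carried it out, and the confinement step you built does not help here.

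The paper (following the cited source) closes this gap by a resolvent \emph{transformation} rather than a first-exit argument. For $\lambda>0$ define $r_\lambda$ by $r_\lambda+\lambda\,k_\alpha*r_\lambda=\lambda k_\alpha$; since $k_\alpha$ is completely monotone, so is $r_\lambda$, hence $r_\lambda\ge0$ (this is the continuous analogue of Lemma~\ref{lem:resolvent}). Convolving your equation $v=k_\alpha f(u_0)+k_\alpha*(gv)$, $g:=f'(u)$, with $r_\lambda$ and using $k_\alpha*r_\lambda=k_\alpha-r_\lambda/\lambda$ gives
\[
v(t)=\frac{f(u_0)}{\lambda}\,r_\lambda(t)+\int_0^t r_\lambda(t-s)\Bigl(1+\tfrac{g(s)}{\lambda}\Bigr)v(s)\,ds.
\]
On any compact $[0,T]\subset[0,T_{\max})$ choose $\lambda>\sup_{[0,T]}|g|$; then the source and the kernel are nonnegative, so $v$ inherits the sign of $f(u_0)$ on $(0,T]$ by a standard Volterra positivity/Gr\"onwall argument. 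Since $T$ is arbitrary, $u'$ has a fixed sign on $(0,T_{\max})$, which is the monotonicity. Your confinement argument (trapping $u$ between consecutive zeros of $f$) is correct and pleasant, but it is not needed for this route and is not part of the paper's sketch; the resolvent trick handles the sign of $v$ directly, with the regularity hypotheses on $f$ entering only to justify differentiating $u=u_0+k_\alpha*f(u)$ and to bound $g$ locally.
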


The basic idea in the proof of the above lemma is divided into two steps. First let $y(t)=u'(t)$ and write out the Volterra integral equations involving of $y$.
Then one can make use of the resolvent to transform the obtained integral equation into another new integral equation so that all the functions involved are non-negative. The positivity of the solution in the new integral equation leads to the required monotonicity. See the details in \cite{feng2018continuous}.

\subsection{General scalar autonomous equations}

In the following, motivated by Lemma \ref{lem:monotonicity}, we study the monotonicity of the solutions for one dimension (scalar) autonomous time fractional ODEs \eqref{eq:autonomous} obtained by the $\mathcal{CM}$-preserving numerical schemes.

\begin{theorem}\label{thm:generalmonotone}
Consider one dimension (scalar) autonomous time fractional 
ODEs \eqref{eq:autonomous}.   Suppose the numerical methods given in \eqref{eq22} or \eqref{eq24} is $\mathcal{CM}$-preserving. 
\begin{itemize}
\item If $f(\cdot)$ is $C^1$ and non-increasing, then for any step size $h>0$,
the numerical solution $\{u_n\}$ is monotone.

\item If $f(\cdot)$ is $C^1$ with $M:=\sup |f'(u)|<\infty$, then when $h^{\alpha}Ma_0<1$, $\{u_n\}$ is monotone.
\end{itemize}
\end{theorem}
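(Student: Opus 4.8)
The plan is to argue by contradiction: I would look at the first index at which the numerical solution could fail to be monotone and derive a contradiction from the sign pattern of $\omega$ recorded in Proposition~\ref{pro:propertyomega}. First, two reductions. Replacing $(u,f)$ by $(-u,\hat f)$ with $\hat f(v):=-f(-v)$ leaves $\mathcal{D}_h^{\alpha}$, the $\mathcal{CM}$-preserving property, the hypotheses on $f$, and the number $h^{\alpha}Ma_0$ all unchanged, while it reverses the sign of $f(u_0)$; so I may assume $f(u_0)\ge 0$. If $f(u_0)=0$, then at each step the (uniquely) solvable implicit equation $v=(\text{known})+h^{\alpha}a_0 f(v)$ has $u_0$ as a solution, so by induction $u_n\equiv u_0$ and there is nothing to prove. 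Hence assume $f(u_0)>0$; the goal is that $\{u_n\}$ be non-decreasing. Each step is uniquely solvable since $v\mapsto(\text{known})+h^{\alpha}a_0 f(v)$ is non-increasing (when $f$ is non-increasing) or a contraction (when $h^{\alpha}a_0 M<1$), and writing $w_k:=u_k-u_{k-1}$ and using $w_1=h^{\alpha}a_0 f(u_1)=h^{\alpha}a_0\bigl(f(u_0)+f'(\xi)w_1\bigr)$ gives $w_1\bigl(1-h^{\alpha}a_0 f'(\xi)\bigr)=h^{\alpha}a_0 f(u_0)>0$ with a positive bracket in either case, so $w_1>0$.

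Next I would difference the scheme. With $c_j:=-\omega_j\ge 0$ $(j\ge1)$ and $\sigma_k:=\sum_{j\ge k}c_j$ (finite, and $\sum_{j\ge1}c_j=\omega_0$ since $\sum_{j\ge0}\omega_j=0$), the scheme has the convolution form (cf.\ the discussion around Proposition~\ref{pro:energy})
\begin{gather*}
h^{-\alpha}\Bigl[\sum_{j=1}^{k-1}c_j\,(u_k-u_{k-j})+\sigma_k\,(u_k-u_0)\Bigr]=f(u_k),\qquad k\ge1.
\end{gather*}
Subtracting this relation at $k=n$ and $k=n-1$ (for $n\ge2$) and rearranging, all index-shift and $u_0$ contributions collapse and one is left with
\begin{gather*}
h^{-\alpha}\Bigl[\sum_{j=1}^{n-1}c_j\,(w_n-w_{n-j})+\sigma_n\,w_n\Bigr]=f(u_n)-f(u_{n-1}).
\end{gather*}
Now suppose $\{u_n\}$ is not non-decreasing, and let $n\ge2$ be the first index with $u_n<u_{n-1}$; then $w_1>0$, $w_2,\dots,w_{n-1}\ge0$ and $w_n<0$, so $w_n-w_{n-j}\le w_n<0$ for $j=1,\dots,n-1$.

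If $f$ is non-increasing, the right-hand side is $\ge0$ while the left-hand side is a sum of nonpositive terms plus $\sigma_n w_n\le0$, hence $\le0$; so both sides vanish and $\sigma_n w_n=0$. But $\sigma_n>0$ because $\omega_j\sim\Gamma(-\alpha)^{-1}j^{-1-\alpha}<0$ for large $j$ by Proposition~\ref{pro:propertyomega}, so $w_n=0$, a contradiction. In the case $M=\sup|f'|<\infty$ with $h^{\alpha}Ma_0<1$, the mean value theorem makes the right-hand side $f'(\xi_n)w_n\ge Mw_n$ (since $w_n<0$ and $f'(\xi_n)\le M$), whereas $w_n-w_{n-j}\le w_n$ and $c_j,\sigma_n\ge0$ make the left-hand side $\le h^{-\alpha}w_n\bigl(\sum_{j=1}^{n-1}c_j+\sigma_n\bigr)=h^{-\alpha}\omega_0 w_n$; as $\omega_0 a_0=1$ (constant term of $F_\omega F_a\equiv1$), dividing $Mw_n\le h^{-\alpha}a_0^{-1}w_n$ by $w_n<0$ gives $h^{\alpha}Ma_0\ge1$, contradicting the hypothesis. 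In both cases $\{u_n\}$ is monotone.

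The step I expect to be the real obstacle is choosing the right reformulation. Arguing on the ``$a$-side'', i.e.\ from $u_n-u_0=h^{\alpha}\sum_j a_j f(u_{n-j})$ and the resulting implicit equation for $w_n$, only yields one-sided estimates, because there the memory terms enter with the unfavourable sign and cannot be tamed without extra smallness; the argument closes only after differencing on the ``$\omega$-side'', where the nonnegativity of the $c_j,\sigma_n$, the identity $\sum_{j\ge1}c_j+\sigma_n=\omega_0=a_0^{-1}$, and the strict inequality $\sigma_n>0$ --- all consequences of the $\mathcal{CM}$ property through Theorem~\ref{thm:CMequivalent} and Proposition~\ref{pro:propertyomega} --- carry the proof. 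Once that viewpoint is adopted, only the telescoping identity and the first-step positivity $w_1>0$ require genuine computation.
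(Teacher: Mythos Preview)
Your proof is correct and considerably more direct than the paper's. The paper follows the continuous-time template of Feng--Li--Liu: it writes $v_n=u_{n+1}-u_n$, introduces for each $\lambda>0$ the discrete resolvent $b=b(\lambda)$ defined by $b+\lambda(a*b)=\lambda a$, proves $b$ is $\mathcal{CM}$ (Lemma~\ref{lem:resolvent}), convolves the $v$-equation with $b$, and after several manipulations arrives at
\[
\Bigl(1-b_0\bigl(1+\tfrac{h^{\alpha}g_n}{\lambda}\bigr)\Bigr)v_n
=h^{\alpha}\bigl(a_0+\tfrac{1}{\lambda}\bigr)f_1 b_n
+\sum_{j=1}^{n-1}b_j\bigl(1+\tfrac{h^{\alpha}g_{n-j}}{\lambda}\bigr)v_{n-j},
\]
with $g_k=f'(\xi_k)$. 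The sign of $v_n$ is then read off by taking $\lambda$ large (so that $1+h^{\alpha}g_{n-j}/\lambda>0$) and, in the Lipschitz case, using $b_0=\lambda a_0/(1+\lambda a_0)$ together with $h^{\alpha}Ma_0<1$.

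You bypass the resolvent entirely by differencing the scheme on the $\omega$-side and arguing by first failure. Your identity
\[
h^{-\alpha}\Bigl[\sum_{j=1}^{n-1}c_j(w_n-w_{n-j})+\sigma_n w_n\Bigr]=f(u_n)-f(u_{n-1})
\]
uses only Proposition~\ref{pro:propertyomega} (signs of $\omega_j$, $\sum\omega_j=0$, $\omega_0=a_0^{-1}$, and the asymptotics giving $\sigma_n>0$), and the two cases fall out in a couple of lines. What you gain is a self-contained argument that needs neither Lemma~\ref{lem:resolvent} nor any auxiliary parameter; what the paper's approach buys is a closer structural parallel to the continuous proof and an explicit recursive formula for $v_n$ with nonnegative coefficients, which could be useful for quantitative refinements. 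Both routes hinge on the same $\mathcal{CM}$-derived sign pattern, but yours exploits it more economically.
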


From the following proof, we can see that for the second claim, we only need $M:=\sup |f'(u)|<\infty$ to be bounded on the convex hull of $\{u_n\}$ considered.
The proof is motivated by the time-continuous version in \cite{feng2018continuous}. 
We first prove a lemma about the discrete resolvent.

\begin{lemma}\label{lem:resolvent}
Suppose $a=\{a_n\}$ is completely monotone. For any $\lambda>0$, define the sequence $b=b(\lambda)$ given by
\[
b+\lambda (a*b)=\lambda a.
\]
Then, $b$ is completely monotone. In particular, it is nonnegative.
\end{lemma}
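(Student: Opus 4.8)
The plan is to solve for $b$ via generating functions and then invoke the Pick-function characterization of $\mathcal{CM}$ sequences (Lemma~\ref{lem:generating}(3)). Taking generating functions in the defining relation $b+\lambda(a*b)=\lambda a$, I get $F_b(z)(1+\lambda F_a(z))=\lambda F_a(z)$, hence
\[
F_b(z)=\frac{\lambda F_a(z)}{1+\lambda F_a(z)}=1-\frac{1}{1+\lambda F_a(z)}.
\]
So it suffices to show that $z\mapsto F_b(z)$ is a Pick function that is analytic and nonnegative on $(-\infty,1)$. Equivalently, writing $F_{\omega}(z)=F_a(z)^{-1}$, one has $F_b(z)=\dfrac{\lambda}{\lambda+F_{\omega}(z)}$, which may be the cleaner form to analyze since $F_{\omega}(z)=\omega_0-zG(z)$ with $G$ Pick, nonnegative and analytic on $(-\infty,1)$ by Theorem~\ref{thm:CMequivalent}.

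The first step is the nonnegativity on $(-\infty,1)$: since $a$ is $\mathcal{CM}$ with $a_0>0$, Lemma~\ref{lem:generating}(3) (or the integral representation $F_a(z)=\int_{[0,1]}(1-zt)^{-1}\,d\mu(t)$) gives $F_a(z)>0$ for $z\in(-\infty,1)$, so $1+\lambda F_a(z)>1>0$ there, whence $F_b(z)=1-(1+\lambda F_a(z))^{-1}\in(0,1)$ on $(-\infty,1)$; in particular $F_b$ is analytic there. The second step is analyticity in $\mathbb{C}_+$: $F_a$ is Pick, hence analytic on $\mathbb{C}_+$, and $\mathrm{Im}(F_a(z))\ge 0$ there, so $1+\lambda F_a(z)$ has nonnegative imaginary part and, more to the point, never vanishes on $\mathbb{C}_+$ (a zero would force $F_a(z)=-1/\lambda<0$, impossible for a value with nonnegative imaginary part unless $\mathrm{Im}(F_a(z))=0$, which by Lemma~\ref{lem:Pick} would make $F_a$ constant $=-1/\lambda$, contradicting $a_0>0$). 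Thus $F_b$ is analytic on $\mathbb{C}_+$.

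The third step is the Pick property $\mathrm{Im}(z)>0\Rightarrow\mathrm{Im}(F_b(z))\ge 0$. Here I use that $w\mapsto w/(1+\lambda w)$ is a Möbius transformation mapping the closed upper half plane $\{\mathrm{Im}(w)\ge 0\}$ into itself (for $\lambda>0$): indeed $1/(1+\lambda w)$ sends upper half plane to lower half plane, so $1-1/(1+\lambda w)$ lands in the upper half plane. Since $F_a$ maps $\mathbb{C}_+$ into $\{\mathrm{Im}(w)\ge 0\}$, the composition $F_b=(w\mapsto w/(1+\lambda w))\circ F_a$ maps $\mathbb{C}_+$ into the closed upper half plane; being also analytic on $\mathbb{C}_+$, $F_b$ is Pick. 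Combining with the second step, Lemma~\ref{lem:generating}(3) yields that $b$ is $\mathcal{CM}$, and in particular $b_k=((I-E)^0 b)_k\ge 0$ for all $k$.

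I do not anticipate a serious obstacle; the only point needing a little care is ruling out $1+\lambda F_a(z)=0$ on $\mathbb{C}_+$ and confirming that the Möbius map genuinely preserves the closed (not just open) upper half plane, so that boundary behavior of $F_a$ does not cause trouble — but the constant-function dichotomy of Lemma~\ref{lem:Pick} handles the degenerate case and the explicit Möbius computation handles the rest. An alternative, more hands-on route would be to iterate the resolvent identity to get the Neumann-type series $b=\sum_{m\ge 1}(-\lambda)^{m-1}\lambda\, a^{*m}$ and argue convergence and $\mathcal{CM}$-closure directly, but the generating-function/Pick argument is shorter and aligns with the tools already developed in the paper.
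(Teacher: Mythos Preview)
Your proof is correct and follows essentially the same route as the paper's: compute $F_b=\lambda F_a/(1+\lambda F_a)$, use Lemma~\ref{lem:Pick} to rule out zeros of $1+\lambda F_a$ in $\mathbb{C}_+$, and then verify the Pick property and nonnegativity on $(-\infty,1)$ so that Lemma~\ref{lem:generating}(3) applies. The only cosmetic difference is that you phrase the Pick verification as a M\"obius composition $w\mapsto w/(1+\lambda w)$, whereas the paper computes $\mathrm{Im}\,F_b$ directly from $F_b=(\lambda F_a+\lambda^2|F_a|^2)/|1+\lambda F_a|^2$; these are the same computation.
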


\begin{proof}
The generating function is 
\[
F_b(z)=\frac{\lambda F_a(z)}{1+\lambda F_a(z)}.
\]
Since $a$ is completely monotone, $F_a(x)\ge 0$ for $x<-1$, and thus so is $F_b(z)$. 

Moreover, we claim that $1+\lambda F_a(z)$ is never zero in the upper half plane. 
Since $a_0\ge 0$, then $1+\lambda F_a(z)\neq 0$ near $z=0$. If it is zero somewhere, 
then $F_a(z)$ is not a constant. By Lemma \ref{lem:Pick}, $\mathrm{Im}(F(z))>0$ for $z\in
\mathbb{C}_{+}$. This is a contradiction. Hence, $F_b(z)$ is analytic in the upper half plane.
Moreover, 
\[
F_b(z)=\frac{\lambda F_a(z)+\lambda^2|F_a(z)|^2}{|1+\lambda F_a(z)|^2}.
\]
Clearly, the imaginary part of $F_b(z)$ is nonnegative and hence it is Pick. The result follows from Theorem \ref{thm:CMequivalent}.
\end{proof}

\begin{proof}[Proof of Theorem \ref{thm:generalmonotone}]

For the convenience, we denote $f_j:=f(u_j).$ The scheme is written as
\begin{gather}\label{eq32}
u_n=u_0+h^{\alpha}\sum_{j=0}^{n-1}a_j f_{n-j}=h^{\alpha} [a*(f-f_0\delta_{d})]_{n},
\end{gather}
where $\delta_{d}=(1, 0, 0,\ldots)$ is the convolutional identity.
We define
$v_n:=u_{n+1}-u_n,  n\ge 0.$
Then, $v_n$ satisfies
\[
v_n=h^{\alpha}f_1 a_n+h^{\alpha}\sum_{j=0}^{n-1}a_j (f_{n+1-j}-f_{n-j}).
\]

We now define
$g_{n-j}:=\frac{f_{n+1-j}-f_{n-j}}{u_{n+1-j}-u_{n-j}} = \frac{f_{n+1-j}-f_{n-j}}{v_{n-j}} =f'(\xi_{n-j})$ for some $\xi$.
Then, the above equation is written as
\begin{gather}\label{eq33}
v_n=h^{\alpha}f_1 a_n+h^{\alpha}\sum_{j=0}^{n-1}a_j g_{n-j}v_{n-j}
=h^{\alpha}f_1 a_n+h^{\alpha}\sum_{j=0}^{n}a_j (g_{n-j}v_{n-j}-\delta_{n-j, 0}g_0v_0).
\end{gather}
In other words, we have that 
$v=h^{\alpha}f_1 a+h^{\alpha} a*(gv-\delta_d g_0v_0).$ Here we have made use of the notation 
$gv=\sum_{j=0}^{\infty}g_j v_j$.
Convolving this equation with $b$ defined in Lemma \ref{lem:resolvent}, we get that
\begin{gather}\label{eq34}
\begin{split}
b*v &=h^{\alpha}f_1 a*b+h^{\alpha}b*a*(gv-\delta_d g_0v_0)\\
&=h^{\alpha}f_1 a*b+h^{\alpha} \left(a-\frac{1}{\lambda}b\right)*(gv-\delta_d g_0v_0).
\end{split}
\end{gather} 
Consequently, it follows form (\ref{eq33}) and (\ref{eq34}) that 
$v_n-(b*v)_n=h^{\alpha}f_1 [a-a*b]_n+h^{\alpha}\frac{1}{\lambda} [b*(gv-\delta_d g_0v_0)]_{n}.$
Hence,
\[
v_n=h^{\alpha}f_1\frac{1}{\lambda}b_n+b_n v_0+ \left[ b*\left(v-v_0\delta+\frac{h^{\alpha}}{\lambda}(gv-\delta_d g_0 v_0)\right) \right]_{n}.
\]
Since $v_0=h^{\alpha}f_1a_0$, we further have
\[
v_n=h^{\alpha} \left( a_0+\frac{1}{\lambda} \right)f_1 b_n +\left[ b*\left( \left(1+\frac{h^{\alpha} g }{\lambda}\right) (v-v_0\delta_d)\right)  \right]_{n}.
\]
Hence, for $n\ge 1$,
\[
\left(1-b_0 \left(1+\frac{h^{\alpha}g_n}{\lambda} \right)\right)v_n
=h^{\alpha} \left( a_0+\frac{1}{\lambda}\right) f_1 b_n+\sum_{j=1}^{n-1}b_j \left(1+\frac{h^{\alpha} g_{n-j} }{\lambda} \right)v_{n-j}.
\]
Note that
$b_0=\frac{\lambda a_0}{1+\lambda a_0}<1.$ Now we discuss respectively in two cases.

Case 1: If $f$ is non-increasing, then we have that $1-b_0 \left(1+\frac{h^{\alpha}g_n}{\lambda} \right)>0$
for all $n$. Fix any $N>0$, we can always choose
$\lambda>0$ big enough such that $1+\frac{h^{\alpha}g_{n-j}}{\lambda}>0$ for all $j\le n\le N$. This choice will not
change the value of $u_j$ and thus $v_{n-j}$; it will only change $b_j$. On the other hand, we know from Lemma \ref{lem:resolvent} that $b_j$ for $j\geq 1$ are nonnegative. With this, we can see that the sign of $v_n=u_{n+1}-u_n$ keeps fixed and is the same as $f_1$
for all $n\le N$. Since $N$ is arbitrary, the claim is proved.

Case 2:  If $f$ has no monotonicity, but 
$M=\sup |f'|<\infty.$
We consider first that $
1-b_0(1+\frac{h^{\alpha}g_n}{\lambda})$. We  can require that
$1+\frac{h^{\alpha}g_n}{\lambda}<\frac{1}{b_0}=1+\frac{1}{\lambda a_0}$ such that $1-b_0 \left(1+\frac{h^{\alpha}g_n}{\lambda} \right)>0$.
Hence, we require
\begin{gather}\label{eq35}
h^{\alpha}Ma_0<1.
\end{gather}
If we choose $\lambda$ large enough,
$1+\frac{h^{\alpha}g_{n-j}}{\lambda}>0$
will also hold. Hence, the sign of $v_n$ is fixed.

\end{proof}

\begin{remark}\label{rem:remark}
If $u\in \bbR^d, d>1$ is a vector, applying the $\mathcal{CM}$-preserving numerical schemes to the equation (\ref{eq1}) 
does not necessarily imply $\|u_{n}\|$ to be monotone. See the example in numerical experiment.
However, if the system can be decomposed into $d$ orthogonal decoupled modes, in which the vector equation can essentially be equivalent to a set of scalar equations and then $\|u_{n}\|$ is monotone.
\end{remark}

\subsection{Linear equations with damping}
\label{sec:linearscalar}

If the equation in (\ref{eq1}) is one dimensional linear equation with damping, i.e., $f(u)=-\lambda u$ ($\lambda>0$), the result is much stronger. 
In fact, it is well known the solution can be expressed as 
\[
u(t)=u_{0}E_{\alpha}(-\lambda t^{\alpha}),
\]
where $E_{\alpha}(z)=\sum_{k=0}^{\infty}\frac{z^{k}}{\Gamma(k\alpha+1)}$ is the Mittag-Leffler function. We have that $u(t)$ is strictly monotonely decreasing and also $\mathcal{CM}$ due to the property of the Mittag-Leffler function $E_{\alpha}(z)$ \cite{gripenberg1990volterra}.
We can show that the corresponding numerical solution is also $\mathcal{CM}$. 

\begin{theorem}\label{thm:linear}
If the numerical method defined in (\ref{eq24}) is $\mathcal{CM}$-preserving, then for the scalar linear equations $\mathcal{D}_c^{\alpha}u=-\lambda u$ with $\lambda>0$ and $u_0>0$, the numerical solution $\{u_n\}$ is  a $\mathcal{CM}$ sequence. 
Moreover, the numerical solution goes to zero as $u_{n}\leq C (nh)^{-\alpha}$, 
where the constant $C$ is independent of $n$.
\end{theorem}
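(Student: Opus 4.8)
The plan is to solve this linear recursion in closed form using the discrete resolvent of Lemma~\ref{lem:resolvent}. With $f(u)=-\lambda u$ and $\mu:=\lambda h^{\alpha}>0$, the scheme \eqref{eq24} reads $u+\mu\,(a*u)=u_0\mathbf 1+\mu u_0\,a$, where $\mathbf 1=(1,1,\dots)$. Let $b=b(\mu)$ be the resolvent sequence of Lemma~\ref{lem:resolvent}, so that $b+\mu\,(a*b)=\mu a$; by that lemma $b$ is $\mathcal{CM}$, in particular $b_j\ge 0$. The resolvent relation gives at once the convolution identity $(\delta_d+\mu a)*(\delta_d-b)=\delta_d-b+\mu a-\mu(a*b)=\delta_d$, i.e. $(\delta_d+\mu a)^{(-1)}=\delta_d-b$. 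Convolving the scheme with $\delta_d-b$ and using once more $\mu a-\mu(a*b)=b$, I obtain $u=u_0\bigl(\mathbf 1-\mathbf 1*b+b\bigr)$, that is $u_n=u_0\bigl(1-\sum_{j=0}^{n-1}b_j\bigr)$ for all $n\ge 0$, with the empty sum read as $0$.

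Next I identify $\sum_j b_j$. Since $a$ is $\mathcal{CM}$ and the scheme is consistent, Proposition~\ref{pro:propertyomega} gives $F_a(z)\to+\infty$ as $z\to1^-$ (equivalently $\omega_0+\sum_{j\ge1}\omega_j=0$), hence the generating function $F_b(z)=\mu F_a(z)/(1+\mu F_a(z))\to 1$ as $z\to 1^-$; as $b_j\ge 0$, Abel's theorem yields $\sum_{j\ge 0}b_j=1$. Therefore $c_n:=\sum_{j\ge n}b_j$ is a well-defined nonnegative sequence with $c_0=1$, and the previous paragraph becomes $u_n=u_0\,c_n$ for every $n\ge 0$. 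Now $(I-E)c=b$, so for $j\ge1$ one has $(I-E)^{j}c=(I-E)^{j-1}b\ge 0$ because $b$ is $\mathcal{CM}$, while $c\ge 0$; by the definition \eqref{eq:CMsequencedef}, $c$ is $\mathcal{CM}$. Since $u_0>0$, $u=u_0 c$ is $\mathcal{CM}$, which is the first assertion.

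For the decay estimate, observe that a $\mathcal{CM}$ sequence is nonincreasing and nonnegative, so $u_{n-j}\ge u_n\ge0$ for $0\le j\le n-1$. Inserting this into $u_n=u_0-\mu\sum_{j=0}^{n-1}a_j u_{n-j}$ gives $u_n\bigl(1+\mu\sum_{j=0}^{n-1}a_j\bigr)\le u_0$, hence $u_n\le u_0\big/\bigl(\mu\sum_{j=0}^{n-1}a_j\bigr)$. By Proposition~\ref{pro:propertyomega}, $a_j\sim j^{\alpha-1}/\Gamma(\alpha)$, so $\sum_{j=0}^{n-1}a_j\ge c_\ast n^{\alpha}$ for all $n\ge1$ and some $c_\ast>0$; therefore $u_n\le \dfrac{u_0}{\mu c_\ast}\,n^{-\alpha}=\dfrac{u_0}{\lambda c_\ast}(nh)^{-\alpha}$, which is the claim with $C=u_0/(\lambda c_\ast)$ independent of $n$.

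The only delicate point is the identity $\sum_j b_j=1$: summing the resolvent relation directly is inconclusive because $\|a\|_{\ell^1}=\infty$, so one must route through the generating function and the divergence $F_a(1^-)=\infty$ supplied by Proposition~\ref{pro:propertyomega}; everything else is routine convolution bookkeeping together with the characterization \eqref{eq:CMsequencedef}. (One could alternatively verify directly, via Lemma~\ref{lem:generating}(3), that $F_u(z)=u_0\bigl[1+z/((1-z)(1+\mu F_a(z)))\bigr]$ is a Pick function that is nonnegative on $(-\infty,1)$, but the resolvent route is cleaner and reuses Lemma~\ref{lem:resolvent}.)
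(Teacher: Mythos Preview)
Your proof is correct and takes a genuinely different route from the paper's. The paper works entirely with generating functions: it computes
\[
F_u(z)=u_0\Bigl(1+\frac{z}{(1-z)(1+\lambda h^{\alpha}F_a(z))}\Bigr),
\]
and then shows directly, via Lemma~\ref{lem:generating}(3), that this is a Pick function nonnegative on $(-\infty,1)$. The key step there is to rewrite $(1+\lambda h^{\alpha}F_a(z))(1-z)=b_0-zH(z)$ with $H$ Pick and compute $\mathrm{Im}\bigl(z/(b_0-zH(z))\bigr)$ by hand. For the decay, the paper reads off the singularity $F_u(z)\sim \frac{u_0}{\lambda h^{\alpha}}(1-z)^{-(1-\alpha)}$ and applies Lemma~\ref{lem:generating}(1) to get the sharp asymptotic $u_n\sim \frac{u_0}{\lambda h^{\alpha}}n^{-\alpha}$.

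Your route instead leverages the resolvent of Lemma~\ref{lem:resolvent} to obtain the explicit closed form $u_n=u_0\sum_{j\ge n}b_j$, and then verifies complete monotonicity termwise from the definition via $(I-E)c=b$. This is arguably cleaner: it recycles Lemma~\ref{lem:resolvent} (which the paper proves but does not use here), avoids the Pick computation, and yields a transparent probabilistic-looking formula for $u_n$. Your decay argument is also different---an elementary comparison using the already-established monotonicity of $u$---and gives the stated bound but not the sharp asymptotic constant. One minor remark: Proposition~\ref{pro:propertyomega} records only the \emph{upper} bound $h^{\alpha}\sum a_j\le C(nh)^{\alpha}$; the lower bound $\sum_{j=0}^{n-1}a_j\ge c_\ast n^{\alpha}$ you need follows from the asymptotic $a_j\sim j^{\alpha-1}/\Gamma(\alpha)$ together with $a_0>0$, which is immediate but worth stating.
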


\begin{proof}
Taking the generating functions on the both sides of \eqref{eq23}, one has
\[
F_{\omega}(z)(F_u(z)-u_0(1-z)^{-1})=h^{\alpha}( F_f(z)-f_0)
=-\lambda h^{\alpha}(F_u(z)-u_0),
\]
where $F_f$ and $F_u$ denote the generating functions of $f=(f_0, f_1,\cdots)$ and $u=(u_0, u_1, \cdots)$ respectively. Then,
\begin{gather}\label{eq29}
F_u(z)=u_0\frac{F_{\omega}(z)(1-z)^{-1}+\lambda h^{\alpha}}{F_{\omega}(z)+\lambda h^{\alpha}}
=u_0 \left(1+\frac{(1-z)^{-1}-1}{1+\lambda h^{\alpha}F_a(z)}\right).
\end{gather}
The function 
\[
F_1(z):=1+\frac{(1-z)^{-1}-1}{1+\lambda h^{\alpha}F_a(z)}
\]
is clearly analytic on $(-\infty, 1)$ and nonnegative on $(-\infty, 1)$
(note that $F_a(x)\ge 0$ on this interval since $a$ is completely monotone).
Hence, we only need to check whether 
\begin{gather}\label{eq210}
G(z):=\frac{z}{ (1+\lambda h^{\alpha}F_a(z) )(1-z)}
\end{gather}
is a Pick function or not. Firstly, it is clearly analytic in the upper half plane by a similar argument in the proof of Lemma \ref{lem:resolvent}. 

Since $a$ is completely monotone, it is easy to see that
\[
(1, 0, 0,\cdots)+\lambda h^{\alpha}(a_0, a_1, \cdots)
=: (b_0, b_1, \cdots)
\] is also completely monotone. Consequently,  $(b_0-b_1, b_1-b_2,\cdots)$
is completely monotone.
Hence, if we define
\[
(1+\lambda h^{\alpha}F_a(z))(1-z)
=b_0-(b_0-b_1)z-(b_1-b_2)z^2-\cdots
=:b_0-z H(z),
\]
then $H(z)$ is a Pick function.
Consequently,
\[
G(z)=\frac{z}{b_0-zH(z)}=\frac{z(b_0-\bar{z}\bar{H}(z))}{|b_0-zH(z)|^2}.
\]
If $\mathrm{Im}(z)>0$, we find
\[
\mathrm{Im}(G(z))=\frac{1}{|b_0-zH(z)|^2} \left( b_0 \mathrm{Im}(z)-|z|^2 \mathrm{Im} \bar{H}(z) \right).
\]
Since $H$ is Pick, $\mathrm{Im} \bar{H}(z)=- \mathrm{Im} H(z)\le 0$. Hence,
$\mathrm{Im}(G(z))>0$.
This shows that $G$ is a Pick function. Therefore, $F_u(z)$ is also a Pick function for $u_0>0$.
This means that $u$ is completely monotone for $u_0>0$ and the claim follows.

Since $F_{a}(z)=(1+o(1))(1-z)^{-\alpha}$ as $z\to 1$, one has
\begin{gather}\label{eq211}
F_u(z)=u_0 \left(1+\frac{(1-z)^{-1}-1}{1+\lambda h^{\alpha}F_a(z)}\right),
\end{gather}
and thus
\[
F_u(z)\sim \frac{u_0}{\lambda h^{\alpha}} \frac{1}{(1-z)^{1-\alpha}} ~\hbox{as}~z\to 1. 
\]
Hence, taking $\beta=1-\alpha$ in Lemma \ref{lem:generating}, we get that $u_{n}\sim \frac{u_0}{\lambda h^{\alpha}} n^{-\alpha}$ as $n\to \infty$, which complete the proof.
\end{proof}

\begin{corollary}
Consider $\mathcal{D}_c^{\alpha}u=- Au$ for $u\in \mathcal{H}$, where $\mathcal{H}$ is a separable Hilbert space and $A: D(A)\to \mathcal{H}$ is nonnegative self-adjoint linear operator, with complete eigenvectors ($D(A)\subset\mathcal{H}$ is the domain of $A$). If we apply the $\mathcal{CM}$-preserving scheme to this equation, then the numerical solution $\|u_n\|$ is non-increasing.
\end{corollary}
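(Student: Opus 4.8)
The plan is to reduce the vector-valued problem to the scalar case already settled in Theorem \ref{thm:linear} by diagonalizing $A$. Let $\{e_k\}_{k\ge1}$ be a complete orthonormal system of eigenvectors, $Ae_k=\lambda_k e_k$ with $\lambda_k\ge0$, and write $u_0^{(k)}:=\langle u_0,e_k\rangle$, $u_n^{(k)}:=\langle u_n,e_k\rangle$. One first notes that the iterates are well defined in $\mathcal{H}$: the implicit step $(I+h^{\alpha}a_0A)u_n=(\text{already computed terms})$ is solvable because $A\ge0$ and $a_0>0$ make $I+h^{\alpha}a_0A$ boundedly invertible, and all subsequent operations on $u_0,\dots,u_{n-1}$ are bounded functions of $A$. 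Taking the inner product of $u_n-u_0=-h^{\alpha}\sum_{j=0}^{n-1}a_j A u_{n-j}$ with $e_k$ and using self-adjointness gives
\[
u_n^{(k)}-u_0^{(k)}=-\lambda_k h^{\alpha}\sum_{j=0}^{n-1}a_j\,u_{n-j}^{(k)},
\]
which is precisely the $\mathcal{CM}$-preserving scheme \eqref{eq24} applied to the scalar damped equation $\mathcal{D}_c^{\alpha}v=-\lambda_k v$.

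Next I would run Theorem \ref{thm:linear} mode by mode. For $\lambda_k>0$ write $u_n^{(k)}=u_0^{(k)}\,m_n^{(k)}$, where $\{m_n^{(k)}\}$ is the numerical solution with unit initial datum; Theorem \ref{thm:linear} asserts that $\{m_n^{(k)}\}$ is a $\mathcal{CM}$ sequence with $m_0^{(k)}=1$. By the definition \eqref{eq:CMsequencedef} of a $\mathcal{CM}$ sequence, $((I-E)m^{(k)})_n=m_n^{(k)}-m_{n+1}^{(k)}\ge0$ and $m_n^{(k)}\ge0$, so $0\le m_{n+1}^{(k)}\le m_n^{(k)}\le1$; hence $|u_{n+1}^{(k)}|\le|u_n^{(k)}|$. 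For $\lambda_k=0$ the scalar recursion collapses to $u_n^{(k)}=u_0^{(k)}$ for all $n$, so the same inequality is trivial. Summing over $k$ via Parseval's identity,
\[
\|u_{n+1}\|^2=\sum_{k}\bigl|u_{n+1}^{(k)}\bigr|^2\le\sum_{k}\bigl|u_n^{(k)}\bigr|^2=\|u_n\|^2,
\]
so $\|u_n\|$ is non-increasing.

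The genuinely delicate point is the infinite-dimensional bookkeeping rather than any new analytic idea: one must check that $u_n=\sum_k u_n^{(k)}e_k$ converges in $\mathcal{H}$ and that the scalar recursions hold simultaneously for every $k$. This follows from the spectral theorem because each iterate is produced from its predecessors by applying bounded functions of $A$, so Parseval transports the $\ell^2$ structure through every step; the completeness of the eigensystem (ruling out continuous spectrum) is exactly what the hypothesis supplies. Once this is granted, the statement is nothing more than a modewise invocation of Theorem \ref{thm:linear}; note also that each mode with $\lambda_k>0$ and $u_0^{(k)}\neq0$ is in fact strictly decreasing, though this strictness need not survive the summation when $u_0$ lies in the kernel of $A$.
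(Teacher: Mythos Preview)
Your proposal is correct and follows essentially the same approach as the paper: diagonalize $A$ via its complete eigenbasis, decouple the scheme into scalar $\mathcal{CM}$-preserving iterations for $\mathcal{D}_c^{\alpha}v=-\lambda_k v$, invoke Theorem \ref{thm:linear} modewise, and conclude by Parseval. You are in fact more careful than the paper's brief argument---in particular you handle the case $\lambda_k=0$ explicitly (Theorem \ref{thm:linear} assumes $\lambda>0$) and address the well-posedness of the implicit step in the infinite-dimensional setting, points the paper leaves implicit.
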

In fact, let $\{e_k\}$ be the eigenvectors of $A$, then $\{e_k\}$ forms an orthogonal basis. One can possibly expand $u=\sum_{k=1}^{\infty}c_k(t)e_k$ such that the equation is decoupled into $\mathcal{D}_c^{\alpha}c_k(t)=-\lambda_k c_k(t)$, where $\lambda_k\ge 0$ is the $k$-th eigenvalue of $A$. Consequently, one has 
\begin{gather}\label{eq:norm}
\|u\|^2=\sum_{k=1}^{\infty}c_k^2(t)\|e_k\|^2,
\end{gather}
which is monotone by the conclusion from the scalar equation.
If we apply the $\mathcal{CM}$-preserving scheme to this equation, then the scheme is implicitly applied for each $c_k(\cdot)$ and \eqref{eq:norm} holds for the numerical solution as well. Then, Theorem \ref{thm:linear} gives the desired result. Typical examples include:
\[
\mathcal{D}_c^{\alpha}u=-(-\Delta)^{\beta}u,
\]
for $\beta\in (0, 1]$, and $\mathcal{H}=L^2(\mathbb{T}^d)$, where $(-\Delta)^{\beta}$ denotes the fractional Laplacian.

\section{Local truncation errors and convergence}
\label{sec:subdiffusion}

Let $u(\cdot)$ be the exact solution of the fractional ODE in \eqref{eq1} and $\mathcal{D}_h^{\alpha}u_n$ be the corresponding $\mathcal{CM}$-preserving numerical schemes in \eqref{eq22}.
In this section, we mainly focus on the local truncation error defined by
\begin{gather}\label{trerror}
r_n:=\mathcal{D}_h^{\alpha}u(t_n)-\mathcal{D}_c^{\alpha}u(t_n)=\mathcal{D}_h^{\alpha}u(t_n)-f(t_n, u(t_n))
\end{gather}
and the convergence of the scheme.

\subsection{Local truncation error}

 As well-known, if $f(t_0, u_0)\neq 0$, $u(\cdot)$ is not smooth
at $t=0$. In particular, $u(\cdot)$ is often of the form:
\begin{gather}\label{eq:solutionform}
u(t)=\sum_{m=1}^{M}\beta_m\frac{1}{\Gamma(m\alpha+1)} t^{m\alpha}+\psi(t),
\end{gather}
where $M=\lfloor 1/\alpha \rfloor$, $\beta_m$ are constants and $\psi(\cdot)\in C^1[0, T]$.
Hence, one cannot expect $\|r_n\|$ to be uniformly small. For example, 
if we apply the GL scheme to $u(t)=\frac{1}{\Gamma(1+\alpha)}t_+^{\alpha}$ corresponding to $f\equiv 1$, we have
\[
r_1=h^{-\alpha}\omega_0 \left(\frac{1}{\Gamma(1+\alpha)}h^{\alpha}-0\right)-1
=\frac{1}{\Gamma(1+\alpha)}-1,
\]
which does not vanish as $h\to 0^+$. However, we aim to show that when $n$ is large enough, $r_n$ is small, which allows us to establish the convergence for the typical solutions with weakly singularity at $t=0$ in \eqref{eq:solutionform} for fractional ODEs.

\begin{theorem}\label{thm:consistency}
Assume that $f(\cdot, \cdot)$ has certain regularity such that \eqref{eq:solutionform} holds for $t\in [0, T]$.  Let $h=T/N$ with $N\in\mathbb{N}$. We decompose
\[
r_n=r_n^{(1)}+r_n^{(2)},
\]
where $r_n^{(1)}$ is the truncation error corresponding to $m=1$
while $r_n^{(2)}=r_{n,m}^{(2)}+r^{(2)}_{n, \psi}$ corresponds to $m\ge 2$ and $\psi$. Then, $r_n^{(1)}$ is independent of $h$ but $\lim_{n\to\infty}r_n^{(1)}=0$, and
\[
\sup_{n: nh\le T}\|r_n^{(2)}\|=o(1),~h\to 0^+.
\]
\end{theorem}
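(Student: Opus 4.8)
The plan is to reduce the whole statement to a single asymptotic estimate for discrete convolutions of $\omega=a^{(-1)}$ against power sequences. Since $\mathcal{D}_h^\alpha$ is linear on sequences and annihilates constants, writing $w_\gamma(t):=t^\gamma/\Gamma(\gamma+1)$ for $\gamma>0$ and using $t_{n-j}=(n-j)h$ one has
\[
\mathcal{D}_h^\alpha w_\gamma(t_n)=\frac{h^{\gamma-\alpha}}{\Gamma(\gamma+1)}\,p_n^{(\gamma)},\qquad
p_n^{(\gamma)}:=\sum_{j=0}^{n}\omega_j(n-j)^\gamma,\qquad
\mathcal{D}_c^\alpha w_\gamma(t_n)=\frac{t_n^{\,\gamma-\alpha}}{\Gamma(\gamma+1-\alpha)} .
\]
So everything hinges on the sublemma: for every $\gamma>0$,
\[
p_n^{(\gamma)}\ \sim\ \frac{\Gamma(\gamma+1)}{\Gamma(1+\gamma-\alpha)}\,n^{\gamma-\alpha},\qquad n\to\infty .
\]
I would prove it by singularity analysis. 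The generating function of $\{p_n^{(\gamma)}\}$ is $F_\omega(z)\,\mathrm{Li}_{-\gamma}(z)$; by \cite{lp16} one has $F_a(z)=\int_{[0,1]}(1-zt)^{-1}\,d\mu(t)$ for a finite nonnegative $\mu$ on $[0,1]$, so $F_a$ is analytic and non-vanishing on $\mathbb{C}\setminus[1,\infty)$ (it is Pick and positive on $(-\infty,1)$), hence $F_\omega=1/F_a$ and $\mathrm{Li}_{-\gamma}$ are both analytic on a domain $\Delta$ as in Lemma~\ref{lem:generating}(1). Combining $F_\omega(z)=(1+o(1))(1-z)^\alpha$ (Proposition~\ref{pro:propertyomega}) with the classical expansion $\mathrm{Li}_{-\gamma}(z)=(\Gamma(\gamma+1)+o(1))(1-z)^{-1-\gamma}$ valid in $\Delta$ (cf. \cite{fs09}), one gets $F_\omega(z)\mathrm{Li}_{-\gamma}(z)\sim\Gamma(\gamma+1)(1-z)^{\alpha-1-\gamma}$ there, and Lemma~\ref{lem:generating}(1) with $\beta=1+\gamma-\alpha>0$ gives the sublemma.

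Granting the sublemma, the three contributions to $r_n=r_n^{(1)}+r_{n,m}^{(2)}+r_{n,\psi}^{(2)}$ follow at once. The $m=1$ term is $r_n^{(1)}=\beta_1\big(\Gamma(\alpha+1)^{-1}p_n^{(\alpha)}-1\big)$: it contains no $h$, and the sublemma at $\gamma=\alpha$ (where the right side is the constant $\Gamma(\alpha+1)$) gives $r_n^{(1)}\to0$. For $m\ge2$, with $\gamma=m\alpha\in(\alpha,1]$ one has $r_{n,m}^{(2)}=\beta_m\,t_n^{\,(m-1)\alpha}\,\eta_n$, where $\eta_n:=\Gamma(m\alpha+1)^{-1}n^{-(m-1)\alpha}p_n^{(m\alpha)}-\Gamma((m-1)\alpha+1)^{-1}\to0$ by the sublemma; since $t_n^{\,(m-1)\alpha}\le T^{(m-1)\alpha}$ on $nh\le T$ and $\{\eta_n\}$ is bounded, a two-regime split — for $n>N_0$ use that $|\eta_n|$ is small, for $n\le N_0$ use $t_n^{\,(m-1)\alpha}=(nh)^{(m-1)\alpha}\to0$ as $h\to0$ — shows $\sup_{n:nh\le T}|r_{n,m}^{(2)}|=o(1)$. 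The identical computation with integer exponent $\gamma=k\ge1>\alpha$ shows $\sup_{n:nh\le T}|\mathcal{D}_h^\alpha P(t_n)-\mathcal{D}_c^\alpha P(t_n)|=o(1)$ for every polynomial $P$.

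For the $\psi$-part I would use a uniform stability bound together with density. First I claim $\sup_{n:nh\le T}\big|\mathcal{D}_h^\alpha\psi(t_n)-\mathcal{D}_c^\alpha\psi(t_n)\big|\le C(\alpha,T)\,\|\psi\|_{C^1[0,T]}$: the continuous side obeys $|\mathcal{D}_c^\alpha\psi(t_n)|\le\|\psi'\|_\infty T^{1-\alpha}/\Gamma(2-\alpha)$, while for the discrete side one writes, with $c_j:=-\omega_j\ge0$ and $\omega_0=\sum_{j\ge1}c_j$,
\[
\mathcal{D}_h^\alpha\psi(t_n)=h^{-\alpha}\Big(\sum_{j=1}^{n}c_j\big(\psi(t_n)-\psi(t_{n-j})\big)+\Big(\sum_{j>n}c_j\Big)\big(\psi(t_n)-\psi(0)\big)\Big),
\]
and then uses $|\psi(t_n)-\psi(t_{n-j})|\le\|\psi'\|_\infty jh$, $|\psi(t_n)-\psi(0)|\le\|\psi'\|_\infty nh$ and the decay $c_j\sim|\Gamma(-\alpha)|^{-1}j^{-1-\alpha}$ from Proposition~\ref{pro:propertyomega}, which give $h^{-\alpha}\sum_{j\le n}jc_j\,h\le C(nh)^{1-\alpha}$ and $h^{-\alpha}(nh)\sum_{j>n}c_j\le C(nh)^{1-\alpha}$. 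Given $\varepsilon>0$, I would then pick a polynomial $P$ with $\|\psi-P\|_{C^1}<\varepsilon/(2C)$ (e.g. $P=\psi(0)+\int_0^{\cdot}q$ with $q$ a Weierstrass approximant of $\psi'$), split $r_{n,\psi}^{(2)}=(\mathcal{D}_h^\alpha-\mathcal{D}_c^\alpha)(\psi-P)(t_n)+(\mathcal{D}_h^\alpha-\mathcal{D}_c^\alpha)P(t_n)$, bound the first term by $C\|\psi-P\|_{C^1}<\varepsilon/2$ uniformly in $n,h$ and the second by $o(1)$ via the previous paragraph; this gives $\sup_{n:nh\le T}|r_{n,\psi}^{(2)}|=o(1)$ and completes the proof.

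The hard part is the sublemma, and inside it the bookkeeping needed to invoke the transfer theorem Lemma~\ref{lem:generating}(1): one must verify that $F_\omega$ is analytic (and that $F_\omega\,\mathrm{Li}_{-\gamma}$ is single-valued) on a slit neighborhood $\Delta$ of the closed unit disk — this is precisely where the integral representation of $F_a$, its positivity on $(-\infty,1)$ and the zero-freeness of non-constant Pick functions off the real axis are used — and that the expansions $F_\omega(z)=(1+o(1))(1-z)^\alpha$ and $\mathrm{Li}_{-\gamma}(z)=(\Gamma(\gamma+1)+o(1))(1-z)^{-1-\gamma}$ hold throughout the sector $\Delta$, not merely on $(-1,1)$. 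Once the sublemma is in hand, everything else is routine $\varepsilon$-bookkeeping.
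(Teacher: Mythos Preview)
Your treatment of the power terms $t^{m\alpha}$ is essentially the paper's: both reduce to the singularity analysis of $F_\omega(z)\sum n^{m\alpha}z^n\sim\Gamma(m\alpha+1)(1-z)^{\alpha-1-m\alpha}$ and invoke the transfer theorem (Lemma~\ref{lem:generating}(1)). The paper handles $m=1$ via Lemma~\ref{lem:generating}(2) instead, but your uniform use of the transfer theorem with $\beta=1$ is arguably cleaner (and sidesteps the Tauberian caveat implicit in part~(2)). You correctly identify, and the paper also glosses over, that applying Lemma~\ref{lem:generating}(1) needs the asymptotic $F_\omega(z)=(1+o(1))(1-z)^\alpha$ to hold throughout a $\Delta$-domain, not merely on $(0,1)$; this is indeed the one nontrivial analytic point.

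Where you genuinely diverge from the paper is the $\psi$-part. The paper inserts the Gr\"unwald--Letnikov scheme as an intermediary, writing $r^{(2)}_{n,\psi}=(\mathcal{D}_h^\alpha\psi-\partial_h^\alpha\psi)+(\partial_h^\alpha\psi-\mathcal{D}_c^\alpha\psi)$, importing the known GL truncation bound for the second bracket and controlling the first by a three-case splitting in $n$ exploiting $\omega_j-\bar\omega_j=o(j^{-1-\alpha})$. Your route is different: you first prove the \emph{uniform operator bound} $\sup_{n:nh\le T}|(\mathcal{D}_h^\alpha-\mathcal{D}_c^\alpha)\psi(t_n)|\le C(\alpha,T)\|\psi\|_{C^1}$ using only $c_j\lesssim j^{-1-\alpha}$ and $\sum_{j>n}c_j\lesssim n^{-\alpha}$, then reduce to polynomials (handled by your sublemma at integer $\gamma$) via Weierstrass approximation of $\psi'$. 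This density argument is correct, more self-contained (no auxiliary scheme, no external truncation estimate), and avoids the three-regime bookkeeping; the paper's approach, on the other hand, is slightly more concrete and could in principle yield quantitative rates if one tracks the $o(1)$ in $\varsigma_j$. Either way the conclusion is the same.
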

\begin{proof}

We consider the truncation error on $\frac{1}{\Gamma(m\alpha+1)}t^{m\alpha}$, which is the fractional integral of $\frac{1}{\Gamma((m-1)\alpha+1)}t^{(m-1)\alpha}$. Clearly,
\[
\mathcal{D}_h^{\alpha} \left(\frac{1}{\Gamma(m\alpha+1)}t_n^{m\alpha} \right)
=h^{(m-1)\alpha}\sum_{j=0}^{n}\omega_j \frac{1}{\Gamma(m\alpha+1)}(n-j)^{m\alpha}=:h^{(m-1)\alpha}G_n,
\]
where $G_n$ is $n$th term of the convolution between $\omega$
and $\{\frac{1}{\Gamma(m\alpha+1)}n^{m\alpha}\}$, independent of $h$.
The generating function of $G$ is given by
\[
F_G(z)=F_{\omega}(z)\sum_{n=0}^{\infty}\frac{1}{\Gamma(m\alpha+1)}n^{m\alpha}z^n.
\]
By Proposition \ref{pro:propertyomega} and the asymptotic behavior of the generating function $\sum_{n=0}^{\infty}\frac{1}{\Gamma(m\alpha+1)}n^{m\alpha}z^n$ (see \cite[Theorem VI.7]{fs09} and the discussion below it), one has
\[
F_G(z)=(1+o(1))(1-z)^{\alpha} \left[(1+o(1))(1-z)^{-(m\alpha+1)}\right],~~z\to 1.
\]
 By  (2) of Lemma \ref{lem:generating}, we find when $m=1$,
%\[
$\lim_{n\to\infty}G_n=\lim_{z\to 1^-}(1-z)F_G(z)=1.$
%\]
We define $r_n^{(1)}$ to be the local truncation error corresponding to $m=1$:
\begin{gather}\label{eq:rn1}
r_n^{(1)}:=\beta_1 G_n-\beta_1\to 0,~n\to\infty.
\end{gather}

We now consider that $m\ge 2$. Using the first of Lemma \ref{lem:generating},
\[
G_n=(1+\varrho_n) \frac{1}{\Gamma((m-1)\alpha+1)}n^{(m-1)\alpha},
\]
where $\varrho_n$ are bounded and $\varrho_n\to 0$ as $n\to\infty$.
Hence, the truncation error corresponding to $m\ge 2$ is given by
\begin{gather}\label{eq:rn2a}
r_{n,m}^{(2)}:=\beta_m\frac{\varrho_n}{\Gamma((m-1)\alpha+1)}(nh)^{(m-1)\alpha}.
\end{gather}
If $N=T/h$ is big enough, this term is uniformly small. For $n\le \sqrt{N}$, it is controlled by $(\sqrt{N}h)^{(m-1)\alpha}$ while for large $n$, it is controlled by $T^{(m-1)\alpha}\sup_{n\ge \sqrt{N}}|\varrho_n|\to 0$ as $N\to\infty$.

Now, consider the local truncation error for $\psi$, which is $C^1[0, T]$.
To do this, we adopt some well-known consistent scheme for smooth functions, for example, the GL scheme \cite{lubich1986}
\[
\partial_h^{\alpha}\psi(t_n):=h^{-\alpha}\sum_{j=0}^n \bar{\omega}_j(\psi(t_{n-j})-\psi(0)).
\]
where $\bar{\omega}_j$ are the coefficients for GL scheme. Then, 
\begin{gather}\label{eq:rn2b}
r^{(2)}_{n, \psi}:=\left[ \mathcal{D}_h^{\alpha}\psi(t_n)-\partial_h^{\alpha}\psi(t_n) \right]
+[\partial_h^{\alpha}\psi (t_n)-\mathcal{D}_c^{\alpha}\psi(t_n)]=: R_{n,1}+R_{n,2}.
\end{gather}
By the well-known truncation error for GL for $\psi\in C^1[0, T]$, we have that $\sup_{n: nh\le T}\|R_{n,2}\|\le Ch^{\alpha}$, see for example \cite{jin2019numerical}. 
We now consider the first term $R_{n,1}$. It is in fact
\[
R_{n,1}=h^{-\alpha} \sum_{j=0}^n \gamma_j(\psi(t_{n-j})-\psi(0)),
\]
with $\gamma_j=\omega_j-\bar{\omega}_j=\varsigma_j (1+j)^{-1-\alpha}.$
By the asymptotic behavior in Proposition \ref{pro:propertyomega},
 $\varsigma_j$ is bounded and goes to zero as $j\to\infty$. 
 Fix $\epsilon>0$. We discuss in three cases.
 
 Case 1: $n\le h^{(\alpha-1)/2}$. We can control directly
 \[
 \|R_{n-1}\|\le h^{-\alpha} \sum_{j=0}^n |\gamma_j| \|\psi'(\xi_{n-j})\| t_{n-j} \le  Ch^{-\alpha} (nh) \sum_{j=0}^n |\gamma_j|
 \le Ch^{(1-\alpha)/2}.
 \]
 
 Case 2:  $h^{(\alpha-1)/2}<n\le \epsilon N$. Then, we can estimate directly that
 \[
 \|R_{n,1}\|\le 
 h^{-\alpha} \left\|\sum_{j=0}^n \gamma_j (\psi(t_{n-j})-\psi(t_n))\right\|
 +h^{-\alpha}\|\psi(t_{n})-\psi(0)\||\sum_{j=0}^n\gamma_j|
 \]
 The first term is controlled by $h^{-\alpha}\sum_{j=0}^n h(1+j)^{-\alpha}
 \le C (nh)^{1-\alpha}$.
 The second term is controlled due to $\sum_{j=0}^{\infty}\gamma_j=0$ by
 \[
 h^{-\alpha} (nh)  \left| \sum_{j=n+1}^{\infty}\gamma_j \right|
 \le C n h^{1-\alpha} n^{-\alpha} \le C (nh)^{1-\alpha}.
 \]
 Hence,  in this case $ \|R_{n-1}\|$ is controlled by $\epsilon^{1-\alpha}T^{1-\alpha}$.
 
 Case 3: $n\ge \epsilon N$. 
 We split the sum as
%\begin{multline*}
\begin{gather*}
\begin{split}
R_{n,1}=&h^{-\alpha} \sum_{j=0}^{\lfloor\epsilon N \rfloor} \gamma_j(\psi(t_{n-j})-\psi(t_n))
+h^{-\alpha} \sum_{j=0}^{\lfloor \epsilon N \rfloor} \gamma_j(\psi(t_{n})-\psi(0))\\
&+h^{-\alpha} \sum_{j=\lfloor\epsilon N \rfloor+1}^{n} \gamma_j(\psi(t_{n-j})-\psi(0)).
%\end{multline*}
\end{split}
\end{gather*}
The first term is controlled directly by
%\[
$C h^{-\alpha}\sum_{j\le \lfloor\epsilon N \rfloor} jh (1+j)^{-1-\alpha}
\le C(\epsilon Nh)^{1-\alpha}.$
%\]
Note that $\sum_{j=0}^{\infty}\gamma_j=0$, the second and third 
can be estimated as
\begin{gather*}
\begin{split}
&h^{-\alpha}\left \|  -\sum_{j=\lfloor\epsilon N \rfloor+1}^{\infty} \gamma_j(\psi(t_{n})-\psi(0))
+ \sum_{j=\lfloor\epsilon N \rfloor+1}^{N} \gamma_j(\psi(t_{n-j})-\psi(0)) \right\|\\
&\le Ch^{-\alpha}\sum_{j=\lfloor\epsilon N \rfloor+1}^{\infty}|\varsigma_j|
(1+j)^{-1-\alpha}\le CT^{-\alpha}\epsilon^{-\alpha}\sup_{j\ge \lfloor\epsilon N \rfloor}|\varsigma_j|.
\end{split}
\end{gather*}
This goes to zero as $h\to 0^{+}$.
Hence, 
%\[
$\lim_{h\to 0}\sup_{n: nh\le T}\|R_{n,1}\|\le C(T)\epsilon^{1-\alpha}.$
%\]
Since $\epsilon$ is arbitrary, the limit must be zero.

Combining all the results, the claims are proved.
\end{proof}

\subsection{Convergence}

The $\mathcal{CM}$-preserving schemes have very good sign properties for the weight coefficients $\omega_j$, which allow us to prove stability and  also convergence. As pointed out in section \ref{sec:generalproperties}, if the scheme is $\mathcal{CM}$-preserving so that $\{a\}$ is completely monotone with $a_0>0$, then
\begin{gather}\label{sign}
\begin{split}
\hbox{(i):}~ \omega_{0}>0,~ \omega_{j}\leq 0~\hbox{for}~ j\geq1;\quad  \hbox{(ii):}~ \omega_{0}+\sum_{j=1}^{\infty}\omega_{j}\ge 0.
\end{split}
\end{gather}

We now conclude the convergence:
\begin{theorem}\label{thm:convergece}
Assume that $f(\cdot, \cdot)$ has certain regularity such that \eqref{eq:solutionform} holds for $t\in [0, T]$.  
If $f(t, \cdot)$ satisfies $(x-y)\cdot(f(t,x)-f(t,y))\le 0$ or is Lipschitz continuous,
then,
\begin{gather}
\lim_{h\to 0}\sup_{n: nh\le T}\|u(t_n)-u_n\|=0.
\end{gather}
\end{theorem}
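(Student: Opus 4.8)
The plan is to follow the standard error-equation approach, exploiting the good sign properties of the $\mathcal{CM}$-preserving weights together with the sharp truncation-error estimate of Theorem~\ref{thm:consistency}. First I would write the error $e_n := u(t_n) - u_n$. Subtracting the scheme \eqref{eq22} from the definition of the truncation error $r_n$ in \eqref{trerror}, one gets
\[
(\mathcal{D}_h^{\alpha}e)_n = f(t_n, u(t_n)) - f(t_n, u_n) + r_n,
\]
and inverting the discrete fractional derivative via the $\mathcal{CM}$ sequence $a = \omega^{(-1)}$ (using that $e_0 = 0$) yields
\[
e_n = h^{\alpha}\sum_{j=0}^{n-1} a_j \bigl( f(t_{n-j}, u(t_{n-j})) - f(t_{n-j}, u_{n-j}) \bigr) + h^{\alpha}\sum_{j=0}^{n-1} a_j r_{n-j},
\]
where I have absorbed the $r_n$ contribution into a source term via the same convolution inverse. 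The key structural facts I will use are $a_j \ge 0$ with $a_j \sim \Gamma(\alpha)^{-1} j^{\alpha-1}$ and $h^{\alpha}\sum_{j=1}^n a_j \le C(nh)^{\alpha} \le CT^{\alpha}$ (Proposition~\ref{pro:propertyomega}).

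Next I would treat the two hypotheses on $f$ in parallel. In the Lipschitz case, taking norms and using $\|f(t_{n-j}, u(t_{n-j})) - f(t_{n-j}, u_{n-j})\| \le L\|e_{n-j}\|$ gives
\[
\|e_n\| \le L h^{\alpha}\sum_{j=0}^{n-1} a_j \|e_{n-j}\| + h^{\alpha}\sum_{j=0}^{n-1} a_j \|r_{n-j}\|,
\]
a discrete fractional Gr\"onwall inequality with a completely monotone kernel; one concludes $\sup_{n:\, nh\le T}\|e_n\| \le C(T)\, \sup_{n:\, nh\le T}\|(a * r)_n\|h^{\alpha}$ by a standard induction / discrete Gr\"onwall argument (the nonnegativity of $a_j$ is exactly what makes this work cleanly). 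In the one-sided/dissipative case $(x-y)\cdot(f(t,x)-f(t,y)) \le 0$, instead of taking norms I would pair the error equation in the form $e_n = e_0 + h^{\alpha}[a*(\delta f + r)]_n$ back into its differential form and test against $e_n$: using $\mathcal{D}_h^{\alpha}\tfrac12\|e_n\|^2 \le e_n \cdot \mathcal{D}_h^{\alpha}e_n$ (Proposition~\ref{pro:energy} with $E(x)=\tfrac12\|x\|^2$) together with $e_n\cdot(f(t_n,u(t_n))-f(t_n,u_n)) \le 0$, one obtains $\mathcal{D}_h^{\alpha}\tfrac12\|e_n\|^2 \le e_n\cdot r_n \le \tfrac12\|e_n\|^2 + \tfrac12\|r_n\|^2$; inverting again with the nonnegative kernel $a$ and applying discrete Gr\"onwall bounds $\sup_n \|e_n\|^2$ by $C(T)$ times a convolution of $a$ against $\|r\|^2$.

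In both cases the final step is to show that the source contribution $h^{\alpha}\sup_{n:\,nh\le T}\|(a*\rho)_n\|$ tends to $0$ as $h\to 0^+$, where $\rho_n = r_n$ (or $r_n^2$). Here the decomposition $r_n = r_n^{(1)} + r_n^{(2)}$ from Theorem~\ref{thm:consistency} is essential: for $r_n^{(2)}$ we have $\sup_{n:\,nh\le T}\|r_n^{(2)}\| = o(1)$, so $h^{\alpha}\sum_{j<n} a_j \|r_{n-j}^{(2)}\| \le o(1)\cdot CT^{\alpha} = o(1)$; for $r_n^{(1)}$, which is $h$-independent, bounded, and satisfies $r_n^{(1)}\to 0$, I would split the convolution at an index $K$: the tail $\sum_{j<n,\ n-j > K} a_j \|r_{n-j}^{(1)}\|$ is bounded by $(\sup_{\ell>K}\|r_\ell^{(1)}\|)\,h^{\alpha}\sum_j a_j \le \varepsilon\, C T^\alpha$ once $K$ is large, while the head $\sum_{n-j\le K} a_j\|r^{(1)}_{n-j}\|$ involves only finitely many terms and is multiplied by $h^\alpha$, hence $O(h^\alpha)\to 0$; letting $h\to 0$ then $\varepsilon\to 0$ kills it. I expect the main obstacle to be making the discrete fractional Gr\"onwall step fully rigorous with a $\mathcal{CM}$ (rather than explicitly GL) kernel — one must verify that the induction constant stays bounded uniformly on $[0,T]$ — but this is exactly where the sign properties $a_j\ge 0$ and the growth bound $h^\alpha\sum_1^n a_j \le C(nh)^\alpha$ do the work, so it should go through along classical lines (e.g. as in the references already cited).
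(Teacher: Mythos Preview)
Your proposal is correct and follows the same skeleton as the paper: derive the error equation $(\mathcal{D}_h^{\alpha}e)_n=f(t_n,u(t_n))-f(t_n,u_n)+r_n$, reduce to a scalar inequality for the norm, invert with the nonnegative kernel $a$, and control the convolved source $h^{\alpha}\sum_{j<n}a_j\|r_{n-j}\|$ by splitting $r=r^{(1)}+r^{(2)}$ exactly as you describe.

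Two points of comparison are worth noting. First, in the dissipative case the paper tests against $e_n$ but then uses the convexity of $E(x)=\|x\|$ (not $\tfrac12\|x\|^2$), obtaining the \emph{linear} inequality $\mathcal{D}_h^{\alpha}\|e_n\|\le \eta\|e_n\|+\|r_n\|$ with $\eta=0$; this finishes immediately upon inversion, with no Gr\"onwall step at all. Your squared-norm route via Young's inequality introduces an artificial $\tfrac12\|e_n\|^2$ term and forces a Gr\"onwall argument even when $f$ is dissipative, so the paper's choice is cleaner there. Second, for the Lipschitz case the paper does not invoke a generic discrete fractional Gr\"onwall lemma; instead it builds an explicit continuous supersolution $v$ with $\mathcal{D}_c^{\alpha}v=Lv$, $v(0)=2$, notes that $v(t_n)$ satisfies the same discrete recursion up to an $o(1)$ defect, and applies the comparison principle (Proposition~\ref{pro:comppf}(3)) to conclude $\|e_n\|\le \epsilon_h\, v(t_n)=2\epsilon_h E_{\alpha}(Lt_n^{\alpha})$. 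This makes the uniform-in-$h$ boundedness of the Gr\"onwall constant completely explicit, which is precisely the point you flagged as the potential obstacle. Your source-term estimate (head/tail split at a fixed $K$) matches the paper's argument essentially verbatim.
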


\begin{proof}
Define
%\[
$e_n=u(t_n)-u_n.$
%\]
Then, we have
\[
\mathcal{D}_h^{\alpha}e_n=f(t_n, u(t_n))-f(t_n, u_n)+r_n,
\]
where $r_n$ is the local truncation error defined in \eqref{trerror}. Taking inner product on both sides with $e_n$ yields that 
\[
\mathcal{D}_h^{\alpha}\|e_n\|\le \|r_n\|+ \eta\|e_n\|,
\]
where $\eta=0$ if $f(t,\cdot)$ satisfies $(x-y)\cdot(f(t,x)-f(t,y))\le 0$ and $\eta=L$ be the Lipschitz constant if $f$ is Lipschitz.
Hence, we have
\[
\|e_n\|  \le \eta h^{\alpha}\sum_{j=0}^{n-1}a_j \|e_{n-j}\|
+h^{\alpha}\sum_{j=0}^{n-1}a_j \|r_{n-j}\|,~n\ge 1.
\]
We claim that
\begin{gather}
\epsilon_h:=\sup_{n: nh\le T} h^{\alpha}\sum_{j=0}^{n-1}a_j \|r_{n-j}\|=o(1),~h\to 0^+.
\end{gather}
We now do the same decomposition in Theorem \ref{thm:consistency} as $\|r_{n-j}\| \le \|r_{n-j}^{(1)}\|+\|r_{n-j}^{(2)}\|$. 
By this decomposition, the summation is controlled by
\[
h^{\alpha}\sum_{j=0}^{n-1}a_j\|r_{n-j}^{(1)}\|+h^{\alpha}\sum_{j=0}^{n-1}
a_j\|r_{n-j}^{(2)}\|.
\]

Let's separately estimate each term in the above equation.
For the second term, we have
 $$ h^{\alpha}\sum_{j=0}^{n-1}a_j\|r_{n-j}^{(2)}\| \leq C(nh)^\alpha\sup\limits_{j}\|r_j^{(2)}\|\leq CT^{\alpha}\sup\limits_{j}\|r_j^{(2)}\|=o(1), h\to 0^+,$$ 
 where we have used the property $h^{\alpha}\sum_{j=0}^{n-1}a_j\leq C(nh)^\alpha$, see Proposition \ref{pro:propertyomega}.
 The first term can be controlled by splitting technique as
\[
h^{\alpha}\sum_{j=0}^{n-N_1}a_j\|r_{n-j}^{(1)}\|
+h^{\alpha}\sum_{j=n-N_1}^{n}a_j\|r_{n-j}^{(1)}\|.
\]
For any $\epsilon>0$, we can pick $N_1$ fixed such that $\|r_{k}^{(1)}\|\le \epsilon$ for all $k\ge N_1$ when $N_1$ is big enough due to Theorem \ref{thm:consistency}.  The sum is then controlled by
\[
\epsilon h^{\alpha}\sum_{j=0}^{n-N_1}a_j+h^{\alpha}N_1^{1-\alpha} \le \epsilon t_{n-N_1}^{\alpha} +h^{\alpha}N_1^{1-\alpha} \le \epsilon T^{\alpha}+h^{\alpha}N_1^{1-\alpha}.
\]
Taking $h\to 0^+$, the limit is $T^{\alpha}\epsilon$. Since $\epsilon$
is arbitrarily small, the claim for $\epsilon_h$ is verified.

If $\eta=0$, the theorem is already proved. Now, we consider $\eta=L>0$.
To do this, we consider the auxiliary function $v(\cdot)$ which solves
%\[
$\mathcal{D}_c^{\alpha}v=L, ~~v(0)=2>0.$
%\]
Then, repeating what has been done, one can verify that
%\[
$v(t_n)=2+h^{\alpha}L\sum_{j=0}^{n-1}a_jv(t_{n-j})+\bar{\epsilon}_h.$
%\]
For $h$ small enough, $2+\bar{\epsilon}_h\ge 1$.
Hence, by the comparison principle (Proposition \ref{pro:comppf}), we find when $h$ is small enough,
\[
\|e_n\|\le \epsilon_h v(t_n)=2\epsilon_h E_{\alpha}(L t_n^{\alpha})\to 0,~h\to 0^{+},~~\forall nh\le T.
\]
The proof is completed.
\end{proof}

\subsection{Application to fractional diffusion equations}

 As a typical application to fractional PDEs, we consider the time fractional sub-diffusion equations, see \cite{liao2019discrete,lv2016error,stynes2017error,jin2019numerical,kopteva2019error}.
Here we follow the basic notation and idea from \cite{kopteva2019error} to establish the convergence of time semi-discretization problem using $\mathcal{CM}$-preserving schemes.

Let $\Omega\subset \mathbb{R}^d (d=1,2,3)$ be a bounded convex polygonal domain and $T>0$ be a fixed time. Consider the initial boundary value problem:
\begin{gather}\label{subdiffusion}
\begin{split}
&\mathcal{D}^{\alpha}_{c}u+\mathcal{L}u=f(x, t) \quad \hbox{for}\quad (x,t)\in \Omega\times (0, T],\\
&u(x,t)=0~ \hbox{for}~ (x,t)\in \partial\Omega\times (0, T],\quad u(x,0)=u_{0}(x)~ \hbox{for}~ x\in \Omega,
\end{split}
\end{gather}
where $\mathcal{D}^{\alpha}_{c}u$ denotes the $\alpha$ order of Caputo derivative with respect to $t$ and $\mathcal{L}$ is a standard linear second-order elliptic operator:
\begin{gather}\label{eq211}
\begin{split}
\mathcal{L}u=\sum_{k=1}^{d} \left\{-\partial_{x_k} (a_{k}(x) \partial_{x_k}u) +b_k(x)\partial_{x_k}u \right\}+ c(x)u,
\end{split}
\end{gather}
with smooth coefficients $\{a_k\}$, $\{b_k\}$ and $c$ in $C(\bar{\Omega})$, for which we assume that $a_k>0$ and $c-\frac{1}{2}\sum_{k=1}^{d}\partial_{x_k}b_k\geq 0$. We also assume that this equations there exists a unique solution in the given domain. 
Different from the classical integer order equations for $\alpha=1$, the solutions of fractional equations \eqref{subdiffusion} usually exhibit weak singularities at $t=0$, i.e., 
\begin{gather}\label{regularity}
\begin{split}
\| \mathcal{D}_{t}^{l}u \|_{L_{2}(\Omega)} \leq C(1+t^{\alpha-1})  \quad \hbox{for}\quad l=0,1,2,
\end{split}
\end{gather}
where $\mathcal{D}_t^l$ denote the classical $l$th order derivative with respect to time, see \cite{stynes2017error}. This low regularity of solutions at $t=0$ often leads to convergence order reduction for solution schemes. Many efforts have been made and new techniques developed to recover the full convergence order of numerical schemes, such as non-uniform grids \cite{liao2019discrete,stynes2017error,kopteva2019error}, and correction near the initial steps \cite{jin2019numerical}.

Consider the time semi-discretization of \eqref{subdiffusion} in time by $\mathcal{CM}$-preserving schemes 
\begin{gather}\label{semidiscretization}
\begin{split}
\mathcal{D}^{\alpha}_{h}U^n+\mathcal{L}U^n=f(\cdot, t_n)~ \hbox{in}~ \Omega,
~~~U^n=0~ \hbox{on}~\partial\Omega,\quad U^0=u_{0},
\end{split}
\end{gather}
where $U^n\approx u(x, t_n)$ and $\mathcal{D}^{\alpha}_{h}U^n=h^{-\alpha}\sum_{j=0}^n \omega_{j}(U_{n-j}-U_0)$ for  $n\geq 1$ stands for the $\mathcal{CM}$-preserving schemes with time step size $h>0$ as in \eqref{eq23}.

The good sign property in \eqref{sign} for $\mathcal{CM}$-preserving schemes will play a key role to establish the stability and convergence for scheme in \eqref{semidiscretization}. By using a complex transformation technique, the authors in \cite{lv2016error} obtain similar conditions like in \eqref{sign} and establish the stability and convergence for a $(3-\alpha)$-order scheme. We emphasize that the $\mathcal{CM}$-preserving schemes we present in this article naturally has this important property.

\begin{theorem}\label{subconvergence}
Let $u$ and $U^n$ be the solutions of equations \eqref{subdiffusion} and \eqref{semidiscretization} respectively. Then under the conditions $c-\frac{1}{2}\sum_{k=1}^{d}\partial_{x_k}b_k\geq 0$, we have that
\begin{gather}\label{convergence}
\begin{split}
\sup_{n: nh\le T}\|u(\cdot, t_n)-U^n\| \leq C h^{\alpha}\sup_{n: nh\le T}\sum_{j=1}^{n-1}a_j\|r_{n-j}\|\to 0, h\to 0^{+}.
\end{split}
\end{gather}
where $r_n= \mathcal{D}^{\alpha}_{h} u(\cdot, t_{n})-\mathcal{D}^{\alpha}_{c} u(\cdot, t_n)$ is the local truncation error. 
\end{theorem}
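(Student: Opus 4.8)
The plan is to mimic the fractional-ODE convergence argument (Theorem \ref{thm:convergece}), with the dissipativity/Lipschitz structure of $f$ replaced by the coercivity of the elliptic operator $\mathcal{L}$. First I would set $e^n := u(\cdot, t_n) - U^n \in L^2(\Omega)$, so that subtracting \eqref{semidiscretization} from \eqref{subdiffusion} at $t = t_n$ gives the error equation $\mathcal{D}^\alpha_h e^n + \mathcal{L}e^n = r_n$ with $e^0 = 0$, where $r_n = \mathcal{D}^\alpha_h u(\cdot, t_n) - \mathcal{D}^\alpha_c u(\cdot, t_n)$. Testing this identity against $e^n$ in $L^2(\Omega)$ and invoking the discrete convexity inequality (the Hilbert-space form of Proposition \ref{pro:energy}, i.e. with $E(v) = \|v\|_{L^2(\Omega)}$, which holds because $\omega_0 > 0$, $\omega_j \le 0$ and $\sum_j \omega_j = 0$) yields $\mathcal{D}^\alpha_h \|e^n\| \le \|e^n\|^{-1}(e^n, \mathcal{D}^\alpha_h e^n)$ when $e^n \neq 0$, the case $e^n = 0$ being trivial, exactly as in the proof of Theorem \ref{thm:convergece}.

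The second step is to extract $(\mathcal{L}e^n, e^n) \ge 0$ from the structural hypothesis. Since $e^n = 0$ on $\partial\Omega$, integrating by parts gives $\int_\Omega b_k(\partial_{x_k}e^n)e^n\,dx = -\tfrac12\int_\Omega(\partial_{x_k}b_k)(e^n)^2\,dx$, hence $(\mathcal{L}e^n, e^n) = \sum_{k=1}^d\int_\Omega a_k|\partial_{x_k}e^n|^2\,dx + \int_\Omega\bigl(c - \tfrac12\sum_{k=1}^d\partial_{x_k}b_k\bigr)(e^n)^2\,dx \ge 0$ under $a_k > 0$ and $c - \tfrac12\sum_k\partial_{x_k}b_k \ge 0$. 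Therefore $(e^n, \mathcal{D}^\alpha_h e^n) = (e^n, r_n) - (\mathcal{L}e^n, e^n) \le \|e^n\|\,\|r_n\|$, and the scalar sequence $w_n := \|e^n\|$ satisfies $\mathcal{D}^\alpha_h w_n \le \|r_n\|$, $w_0 = 0$.

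Next I would convert this differential-type inequality into the asserted bound via the convolution inverse: writing $\mathcal{D}^\alpha_h w_n = \|r_n\| - s_n$ with $s_n \ge 0$ and convolving with $a = \omega^{(-1)}$ (equivalently, passing to the form \eqref{eq24}), the nonnegativity $a_j \ge 0$ (from $a$ being $\mathcal{CM}$, Proposition \ref{pro:propertyomega}) discards the $s_n$ contribution and produces $\|e^n\| \le h^\alpha\sum_{j=0}^{n-1}a_j\|r_{n-j}\|$, which is the right-hand side of \eqref{convergence}. It then remains to show this tends to $0$ uniformly for $nh \le T$; this is verbatim the argument in the proof of Theorem \ref{thm:convergece}. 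One decomposes $r_n = r_n^{(1)} + r_n^{(2)}$ as in Theorem \ref{thm:consistency}, which applies with $\|\cdot\|_{L^2(\Omega)}$ in place of the Euclidean norm because the regularity \eqref{regularity} gives the expansion \eqref{eq:solutionform} pointwise in the spatial variable; the $r^{(2)}$-part is bounded by $CT^\alpha\sup_j\|r_j^{(2)}\|_{L^2(\Omega)} = o(1)$ using $h^\alpha\sum_{j=0}^{n-1}a_j \le C(nh)^\alpha$, while the $r^{(1)}$-part is split into the $n - N_1$ early terms (bounded by $\epsilon T^\alpha$ for $N_1$ large since $\|r_k^{(1)}\| \to 0$) and the last $N_1$ terms (bounded by $Ch^\alpha N_1^{1-\alpha} \to 0$ via $a_j \sim \frac{1}{\Gamma(\alpha)}j^{\alpha-1}$).

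The only genuinely new ingredient relative to the ODE case is the coercivity computation for $\mathcal{L}$ in the second paragraph; everything else transcribes the Hilbert-space energy estimate and the truncation-error bounds already established. The minor technical points to be careful about are the termwise validity of Proposition \ref{pro:energy} in the Hilbert-space setting (including the vanishing of $e^n$) and checking that Theorem \ref{thm:consistency} indeed carries over to the $L^2(\Omega)$-valued solution under \eqref{regularity}; neither of these presents a real obstacle.
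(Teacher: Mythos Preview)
Your proposal is correct and follows essentially the same route as the paper: derive the error equation $\mathcal{D}_h^\alpha e^n+\mathcal{L}e^n=r_n$, test against $e^n$ in $L^2(\Omega)$, use $(\mathcal{L}e^n,e^n)\ge 0$ from the structural hypothesis to obtain $\mathcal{D}_h^\alpha\|e^n\|\le\|r_n\|$, and then defer to the argument of Theorem~\ref{thm:convergece}. The only cosmetic difference is that the paper expands $\mathcal{D}_h^\alpha e^n$ directly and applies Cauchy--Schwarz termwise (using $-\omega_j\ge 0$), whereas you phrase the same step as an instance of the convexity inequality in Proposition~\ref{pro:energy}; your explicit integration-by-parts verification of $(\mathcal{L}e^n,e^n)\ge 0$ is a detail the paper leaves implicit.
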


\begin{proof}
Let the error $e^n:=u(\cdot, t_n)-U^n$. It follows from \eqref{subdiffusion} and \eqref{semidiscretization} that $e^0=0$ and 
\[
\mathcal{D}^{\alpha}_{h}e^n+\mathcal{L}e^n
= \mathcal{D}^{\alpha}_{h} u(\cdot, t_{n})-f(t_n,\cdot)
=\mathcal{D}^{\alpha}_{h} u(\cdot, t_{n})-\mathcal{D}^{\alpha}_{c} u(\cdot, t_n), ~~1\leq n\leq T/h.
\]
By the definition $\mathcal{D}^{\alpha}_{h}e^n=h^{-\alpha}\sum_{j=0}^n \omega_{j}(e^{n-j}-e^0)$ the above equation can be rewritten as
\begin{gather}\label{errorequation}
\frac{\omega_0}{h^{\alpha}}  e^n+\mathcal{L}e^n=\frac{1}{h^{\alpha}} \sum_{j=1}^n (-\omega_{j})e^{n-j}+r_n, ~~1\leq n\leq T/h.
\end{gather}
Now we take the standard $L_2(\Omega)$ inner product in \eqref{errorequation} with $e^n$. Note that the condition $c-\frac{1}{2}\sum_{k=1}^{d}\partial_{x_k}b_k\geq 0$ implies that $\langle \mathcal{L}e^n, e^n\rangle_{L_2(\Omega)}\geq 0$. According to sign properties in \eqref{sign}, we get the error equation 
\begin{gather}\label{L2errorequation}
\frac{\omega_0}{h^{\alpha}}  \|e^n\|_{L^2(\Omega)} \leq \frac{1}{h^{\alpha}} \sum_{j=1}^n (-\omega_{j}) \|e^{n-j}\|_{L^2(\Omega)}+\|r_n\|_{L^2(\Omega)},~n\ge 1.
\end{gather}
In other words
\[
\mathcal{D}_h^{\alpha}\|e^n\|_{L^2(\Omega)}\le \|r_n\|_{L^2(\Omega)},~n\ge 1.
\]
The remaining proof is similar as Theorem \ref{thm:convergece}.
\end{proof}

From the above proof we can see that once we establish the order with respect to $\|r_n\|_{L^2(\Omega)}$, we will obtain the order of convergence of the numerical scheme. Similarly, for the fully discrete numerical schemes by applying a standard finite difference or finite element methods to spatial approximation to the time semi-discretization \eqref{semidiscretization}, we can also obtain the corresponding convergence order.

\section{Extension to Volterra integral equations}\label{Volterra}

We consider the second class of Volterra integral equation
\begin{gather}\label{eq41}
u(t)=u_{0}+\int_{0}^{t}k(t-s) f(s, u(s))ds,  \quad t>0,
\end{gather}
with initial value $u(0)=u_{0}$. 
We consider discretization
\begin{gather}\label{eq:voldis}
u_n-u_0=[b*(f-f_0\delta_{n,0})]_n=[b*f-f_0 b_n]_{n}=\sum_{j=0}^{n-1} b_j f_{n-j}, ~~n\ge 1.
\end{gather}
Note that here sequence $b$ corresponds to $h^{\alpha}a$ for the fractional ODE. We do not factor $h^{\alpha}$ out because $k(\cdot)$ may not be homogeneous. For example, $k(t)=t_+^{-1/2}+t_+^{-1/3}$.
We define the following.
\begin{definition}\label{eq:consistentvol}
We say the discretization given in \eqref{eq:voldis} is consistent for Volterra integral with $\mathcal{CM}$ kernel if a function $\phi(\cdot)$ with the typical regularity of $f(u(t))$ in \eqref{eq41} satisfies
\[
\epsilon_h:=\sup_{n\ge 1, nk\le T}\left\|\sum_{j=0}^{n-1}b_j\phi(t_{n-j})-\int_0^{t_{n}}k_{\alpha}(s)\phi(t_n-s)\,ds \right\|=o(1),~h\to 0^+.
\]
\end{definition}
\begin{definition}
We say a consistent (in the sense of Definition \ref{eq:consistentvol}) numerical method given in \eqref{eq:voldis}
for the convolutional Volterra integral equation \eqref{eq41} with  $\mathcal{CM}$ kernel is $\mathcal{CM}$-preserving if the sequence $b$ is a $\mathcal{CM}$ sequence.
\end{definition}

The main results regarding monotonicity given in Theorem \ref{thm:generalmonotone} for one dimension autonomous equations   can be extended to the Volterra integral equations with more general $\mathcal{CM}$ kernel functions directly. Moreover, the sign properties for the convolutional inverse $\nu:=b^{(-1)}$ also hold except that we generally have $\nu_0+\sum_{j=1}^{\infty}\nu_j\ge 0$ because $\|b\|_{\ell^1}$ may be finite. With the sign properties, analogy of Propositions \ref{pro:energy} and \ref{pro:comppf} hold except that we need $b_0 L<1$
to replace $h^{\alpha}La_0<1$.

\begin{theorem}\label{thm:convforvol}
Suppose \eqref{eq41} has a locally integrable $\mathcal{CM}$ kernel and $f(t, \cdot)$ is Lipschitz continuous. Then when applying a $\mathcal{CM}$-preserving scheme, we have
\[
\lim_{h\to 0^+}\sup_{n: nh\le T}\|u_n-u(t_n)\|=0.
\]
\end{theorem}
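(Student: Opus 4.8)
The plan is to run the same scheme as in the proof of Theorem~\ref{thm:convergece}, with $h^{\alpha}a$ replaced by the general weight sequence $b$ and the Mittag--Leffler majorant replaced by the solution of a scalar linear renewal equation. First I would record the preliminaries: since $k$ is locally integrable and $f(t,\cdot)$ is Lipschitz, \eqref{eq41} has a unique continuous solution $u$ on $[0,T]$ and $\phi(t):=f(t,u(t))$ lies in the regularity class of Definition~\ref{eq:consistentvol}; moreover, taking $\phi\equiv 1$, $n=1$ in the consistency relation shows $b_0\le\int_0^h k(s)\,ds+\epsilon_h\to 0$ as $h\to 0^{+}$, so for $h$ small (say $b_0L<\tfrac12$) the implicit relation \eqref{eq:voldis} is uniquely solvable at each step.

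Next I set $e_n:=u(t_n)-u_n$, so $e_0=0$. Subtracting \eqref{eq:voldis} from the exact identity $u(t_n)=u_0+\int_0^{t_n}k(t_n-s)f(s,u(s))\,ds$ and inserting the term $\sum_{j=0}^{n-1}b_j f(t_{n-j},u(t_{n-j}))$ gives
\[
e_n=\rho_n+\sum_{j=0}^{n-1}b_j\bigl(f(t_{n-j},u(t_{n-j}))-f(t_{n-j},u_{n-j})\bigr),
\]
where $\rho_n$ is precisely the quadrature defect of Definition~\ref{eq:consistentvol} for $\phi=f(\cdot,u(\cdot))$, hence $\epsilon_h:=\sup_{nh\le T}\|\rho_n\|=o(1)$ by consistency. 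The Lipschitz bound then yields the discrete convolution inequality
\[
\|e_n\|\le\epsilon_h+L\sum_{j=0}^{n-1}b_j\|e_{n-j}\|,\qquad n\ge 1 .
\]

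For the majorant I would use the scalar linear renewal equation $v=2+L\,(k*v)$ on $[0,T]$. Because $k\ge 0$ is locally integrable, classical Volterra resolvent theory gives a unique solution $v$, nonnegative and bounded on $[0,T]$ (it is $v=2+L\,(r*2)$ with $r$ the nonnegative locally integrable resolvent kernel of $Lk$), so $C_v:=\sup_{[0,T]}v<\infty$ \emph{independently of $h$}. Since $v$ has the regularity required by Definition~\ref{eq:consistentvol} (the resolvent of a $\mathcal{CM}$ kernel inherits the mild singularity of the kernel), applying the quadrature to the exact $v$ gives $v(t_n)=2+L\sum_{j=0}^{n-1}b_j v(t_{n-j})+\bar\rho_n$ with $\sup_n|\bar\rho_n|=\bar\epsilon_h=o(1)$. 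Hence, for $h$ small enough, $w_n:=\epsilon_h v(t_n)$ satisfies $w_0=2\epsilon_h\ge\|e_0\|$ and $w_n\ge\epsilon_h+L\sum_{j=0}^{n-1}b_j w_{n-j}$. Comparing the two relations by induction on $n$, isolating the $j=0$ term and dividing by $1-Lb_0>0$ exactly as in the proof of Proposition~\ref{pro:comppf}, one gets $\|e_n\|\le w_n\le C_v\,\epsilon_h$ for all $nh\le T$; letting $h\to0^{+}$ finishes the proof.

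The routine ingredients are the error decomposition, the Lipschitz estimate and the monotone induction. The step that needs genuine care is the majorant: one must know that the auxiliary renewal equation has a solution that is \emph{simultaneously} locally bounded on $[0,T]$ — this is exactly where local integrability of the $\mathcal{CM}$ kernel enters — and regular enough for Definition~\ref{eq:consistentvol} to apply to it. A variant that bypasses the auxiliary equation is to invoke a discrete Gr\"{o}nwall inequality of convolution type directly, using the uniform bound $\sup_{nh\le T}\sum_{j=0}^{n-1}b_j\le\int_0^T k(s)\,ds+o(1)<\infty$ (again from consistency with $\phi\equiv1$); but the resulting constant is still governed by the continuous resolvent, so the two routes are essentially equivalent.
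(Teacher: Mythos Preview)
Your proof is correct and follows the same strategy as the paper: derive the error inequality $\|e_n\|\le\epsilon_h+L\sum_{j=0}^{n-1}b_j\|e_{n-j}\|$, build a majorant from the solution of an auxiliary linear renewal equation, and compare by induction (isolating the $j=0$ term and using $1-Lb_0>0$). The only difference is cosmetic: the paper takes $v^{\delta}$ solving $v=2\delta+L(k*v)$, shows $\|e_n\|\le v^{\delta}(t_n)$ once $\epsilon_h<\delta$, and then lets $\delta\to 0$ using continuity of the Volterra solution in its forcing term; you instead fix $v$ with datum $2$ and scale, setting $w_n=\epsilon_h v(t_n)$, which gives $\|e_n\|\le C_v\epsilon_h$ directly and spares the final limiting argument. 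Both routes rest on the same two substantive points you correctly flag: local integrability of $k$ makes the auxiliary $v$ bounded on $[0,T]$, and one must assume $v$ falls in the regularity class of Definition~\ref{eq:consistentvol}---the paper glosses over this just as you do.
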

We sketch the proof here without listing the details. In fact, the error $e_n:=\|u(t_n)-u_n\|$ satisfies
\[
e_n\le L\sum_{j=0}^{n-1}b_j\|e_{n-j}\|+\epsilon_h,~n\le T/h.
\]
Consider $v(\cdot)$ solving $v(t)=2\delta+L\int_0^tk(t-s)v(s)\,ds$, with $\delta>0$. By the consistency,
\[
v(t_n)=2\delta+L\sum_{j=0}^{n-1}b_jv(t_{n-j})+\bar{\epsilon}(n,h)
\ge \delta+L\sum_{j=0}^{n-1}b_jv(t_{n-j}),
\]
when $h$ is small enough. Clearly, when $h$ is small enough,
$\epsilon_h<\delta$ for any fixed $\delta>0$. By direct induction,
\[
e_n\le v(t_n),~\forall n, nh\le T.
\]
The Volterra equation is continuous in terms of the initial value if the kernel is locally integrable. Since $\delta$ is an arbitrary positive number,  $\lim_{h\to 0}\sup_{n:nh\le T}e_n=0$.

\begin{remark}
When $k(t)=\frac{1}{\Gamma(\alpha)}t_+^{\alpha-1}$, the consistency in Definition \ref{def:consistency} can imply the consistency in Definition \ref{eq:consistentvol}. Hence, the conclusion in Theorem \ref{thm:convforvol} also applies to fractional ODEs. 
\end{remark}

Typical examples for completely monotone kernel functions are including that 

\begin{itemize}
 
\item The sum of several standard kernel: $k_{1}(t)=c_{1} k_{\alpha_{1}}(t)+c_{2}k_{\alpha_{2}}(t)+\cdots+c_{m}k_{\alpha_{m}}(t)$, where $c_j>0, \alpha_j\in (0, 1)$ for $j=1,2,...,m$.

\item The standard kernel with exponential weights: $k_{2}(t)= k_{\alpha}(t) e^{-\gamma t}, \gamma>0$.
\end{itemize}

One can easily construct $\mathcal{CM}$-preserving schemes for these equations using the ones in section \ref{sec:fourschemes}. In particular
\begin{enumerate}
\item for $k_1(t)$, one can use any scheme or their linear combination in section \ref{sec:fourschemes} to approximate $k_{\alpha_j}$ and this yields a $\mathcal{CM}$-preserving scheme for $k_1(t)$.
\item for $k_2(t)$, one can take the piecewise integral as approximation as in \cite{li2019discretization}:
\begin{gather}\label{eq42}
b_n=\int_{t_n}^{t_{n+1}}k_2(t)\,dt,
\end{gather}
where we recall $t_n=nh$.
\end{enumerate}

In addition, we can also use the CQ \cite{lubich1986} to calculate the convolutional Volterra integral. In general, we can approximate the convolutional integral as 
\begin{gather}\label{eq43}
\int_{0}^{t_n} k(t_n-s)g(s)ds\approx \left[ K \left(\frac{\delta(z)} {h} \right) F_{g}(z)\right]_{n},
\end{gather}
where $K$ is the Laplacian transform of the kernel $k(t)$, $\delta(z)=\breve{\rho}(z)/\breve{\sigma}(z)$ is the generating function based on classical linear multistep method $(\rho, \sigma)$ as in \eqref{eq26}, and $F_{g}(z)$ is the generating function of $(g_0, g_1,...)$. Therefore, if we can calculate $K$ accurately and choose $(\rho,\sigma)$ appropriately then we obtain the corresponding numerical schemes. As in section \ref{sec:fourschemes} for fractional ODEs, we can choose $(\rho,\sigma)$ in two ways:

(i): $\sigma(z)=z, \rho(z)=z-1$, and $\delta(z)=1-z$;

(ii): $\sigma(z)=\theta z+ (1-\theta), \rho(z)=z-1$ with $\theta\geq 1$, and $\delta(z)=\frac{1-z}{\theta + (1-\theta)z}=\frac{1-z}{2-z}$, where we take $\theta=2$.

For example, for $k_2(t)$ we have that 
\[
K[k_2(t)](z)=\mathcal{L} \left[k_\alpha(t) e^{-\gamma t} \right](z)= (z+\gamma)^{-\alpha}.
\]
Therefore, 
\begin{gather}\label{eq44}
\int_{0}^{t_n} k_{2}(t_n-s)g(s)ds\approx \left[ \left(\frac{\delta(z)} {h} +\gamma \right)^{-\alpha} F_{g}(z)\right]_{n}=h^\alpha \left[ \left(\delta(z) +h\gamma \right)^{-\alpha} F_{g}(z)\right]_{n}.
\end{gather}
Then we get the numerical schemes for Volterra integral equation \eqref{eq41} as
\begin{gather}\label{eq45}
u_{n}=u_0+h^{\alpha}\sum_{j=1}^{n}v_{n-j}f_{j},~~n\geq 1,
\end{gather}
where the weight coefficients $\{v_j\}$ derived from one of the following generating functions
\begin{gather}\label{eq46}
\begin{split}
&(i):  \left(1-z +h\gamma \right)^{-\alpha}= (1+h\gamma)^{-\alpha} \left(1-\frac{1}{1+h\gamma} z  \right)^{-\alpha}=\sum_{j=0}^{\infty}v_jz^{j};\\
&(ii):  \left(\frac{1-z} {2-z} +h\gamma \right)^{-\alpha}= \left(\frac{1+2h\gamma}{2}\right)^{-\alpha} \left(\frac{1-\frac{1+h\gamma}{1+2h\gamma} z} {1-z/2}  \right)^{-\alpha}=\sum_{j=0}^{\infty}v_jz^{j}.
\end{split}
\end{gather}

We now check if the generating functions $F_b(z)$ defined in \eqref{eq46} is a Pick function or not and the non-negativity on $(-\infty, 1)$.

For (i) in \eqref{eq46}, we have that $F_b(z)=\left(1-z +h\gamma \right)^{-\alpha}$. Since $\gamma>0$, it is easy to see $F_b(z)$ is a pick function and analytic, positive on $(-\infty, 1)$.

For (ii) in \eqref{eq46}, we have that $F_b(z)=\left(\frac{1-z} {2-z} +h\gamma \right)^{-\alpha}$. 
We rewrite
\[
F_b(z)=\left(\frac{1+2h\gamma}{2}\right)^{-\alpha} \left( \frac{1-z/2} {1-q z}  \right)^{\alpha}
:= \left(\frac{1+2h\gamma}{2}\right)^{-\alpha} (H(z))^{\alpha},
\]
where $q=\frac{1+h\gamma}{1+2h\gamma}\in(\frac{1}{2},1]$.
We now claim the function $H$ is Pick. 
 In fact,
\[
H(z)=\frac{1-z/2}{1-qz} =\frac{(1-z/2)(1- q\bar{z})}{|1- qz|^2}=\frac{1-q\bar{z}-z/2+q|z|^2/2} {|1-qz|^2},
\]
which implies that $\mathrm{Im}(H)=(q-\frac{1}{2})\mathrm{Im}( \frac{z}{|1- z|^2})$, and the result follows by noting that $q>\frac{1}{2}$.
Moreover, for $z\in \mathbb{R}$, the numerator becomes $1-(q+\frac{1}{2})z+\frac{q}{2}|z|^2$. 
%Since $(q+1/2)^2-4*(q/2)=(q-1/2)^2\ge 0$ while the denominator is positive on $(-\infty, 1)$, 
Since  $1-(q+\frac{1}{2})z+\frac{q}{2}|z|^2=0$ has roots $z_{1}=2$ and $z_2=1/q>1$ so the numerator is positive on $(-\infty, 1)$ and the denominator is also positive on $(-\infty, 1)$,
so when $z\in (-\infty, 1)$, $H(z)>0$. Hence, $H(z)$ is a Pick function that is analytic and positive on $(-\infty, 1)$ and consequently, 
$F_b(z)$ is also Pick and nonnegative on $(-\infty, 1)$.

The weight coefficients $\{v_j\}$ can be recursively evaluated by the Miller formula in Lemma \ref{lem:FPS}.
Let that $\left(1-\frac{1}{1+h\gamma} z \right)^{-\alpha}=\sum_{j=0}^{\infty}m_j z^j$,  
$\left(1-\frac{1+h\gamma}{1+2h\gamma} z \right)^{-\alpha}=\sum_{j=0}^{\infty}n_j z^j$ and $\left(1- z/2 \right)^{\alpha}=\sum_{j=0}^{\infty}p_j z^j$,
where for coefficients $m_j$, $n_j$  and $p_j$ can be recursively computed by
\begin{gather}\label{eq47}
\begin{split}
m_0&=1, m_{k}=-\frac{1}{1+h\gamma}  \left(\frac{1-\alpha}{k} -1\right)m_{k-1}, \quad k\geq 1,\\
n_0&=1, n_{k}=-\frac{1+h\gamma}{1+2h\gamma} \left(\frac{1-\alpha}{k} -1\right)n_{k-1}, \quad k\geq 1,\\
p_0&=1, p_{k}=-  \frac{1}{2} \left(\frac{1+\alpha}{k} -1\right)p_{k-1}, \quad k\geq 1.
\end{split}
\end{gather}
Hence, the weight coefficients in schemes in \eqref{eq45} are given by 
\begin{gather}\label{eq48}
\begin{split}
(i): v_{j}=(1+h\gamma)^{-\alpha} m_{j}\quad \hbox{or}\quad (ii): v_{j}= \left(\frac{1+2h\gamma}{2}\right)^{-\alpha} \sum_{l=0}^{j} n_{j-l}p_{l}.
\end{split}
\end{gather}

Note that in the numerical scheme \eqref{eq45} for kernel $k_{2}(t)$, the coefficients $v_j$ depends on the step size $h$ explicitly. 
This is because the Laplacian transform of $k_2(t)$ is an inhomogeneous function on $z$ for $\gamma>0$, see \eqref{eq44}.

\section{Numerical experiments}\label{numerical}
In this section, we first perform numerical experiments to confirm the monotonicity of numerical solutions for $\mathcal{CM}$-preserving schemes applied to scalar autonomous fractional ODEs or Volterra integral equations with $\mathcal{CM}$ kernels. In \cite{wang2018long,wang2019dis}, the authors have shown that for linear scalar fractional ODEs with damping or delay differential equations, the long time decay rate $u_{n}=O(t_{n}^{-\alpha})$ as $n\to \infty$ both from theoretically and numerically by energy type methods. In this paper, we focus on the monotonicity of numerical solutions for nonlinear fractional ODEs and Volterra integral equations. We also provide numerical example on time fractional advection-diffusion equations to confirm the nice stability of $\mathcal{CM}$-preserving schemes.

\subsection{Fractional ODEs} Consider the scalar fractional ODE for $\alpha\in(0, 1]$,
\begin{gather}\label{eq:exam1}
\mathcal{D}_c^{\alpha}u(t) =Au-Bu^2,
\end{gather}
with initial value $u(0)=u_{0}$, where the two constants $A$ and $B$ satisfying that $A\cdot B>0$. For all order $\alpha\in(0, 1]$, this equation has two particular solutions $u_1=0$ and $u_2=\frac{A}{B}$. For $\alpha=1$ has the following general solution 
\[
u(t)=\frac{A}{B+\left(\frac{A}{u_0} -B\right) e^{-At}}.
\]
We can easily see from the expression that for $A, B>0$, if $u_0>0$, all the solutions asymptotically tend to the constant $A/B$; while for $u_0<0$, all the solutions will
blow up in finite time and have vertical asymptotic lines. The case for $A, B<0$ is similar. 

In Fig. (\ref{pic1}), we plot the numerical solutions for $\alpha=1$ and $\alpha=0.8$, respectively. It is clearly that all the solutions are monotone and asymptotically tends to the constant $A/B=2$, and they are asymptotic stable, as expected.  The order of $\alpha$ has a significant impact on the decay rates of the numerical solutions.  For the classical ODE with $\alpha=1$, we can see the solutions will decay exponentially while for $\alpha\in(0,1)$ the solutions will only decay with algebraic rate, which leads to the so called heavy tail effect for fractional dynamics \cite{wang2018long}.   

\begin{figure}[!ht]
\begin{center}
$\begin{array}{cc}
 \includegraphics[scale=0.50]{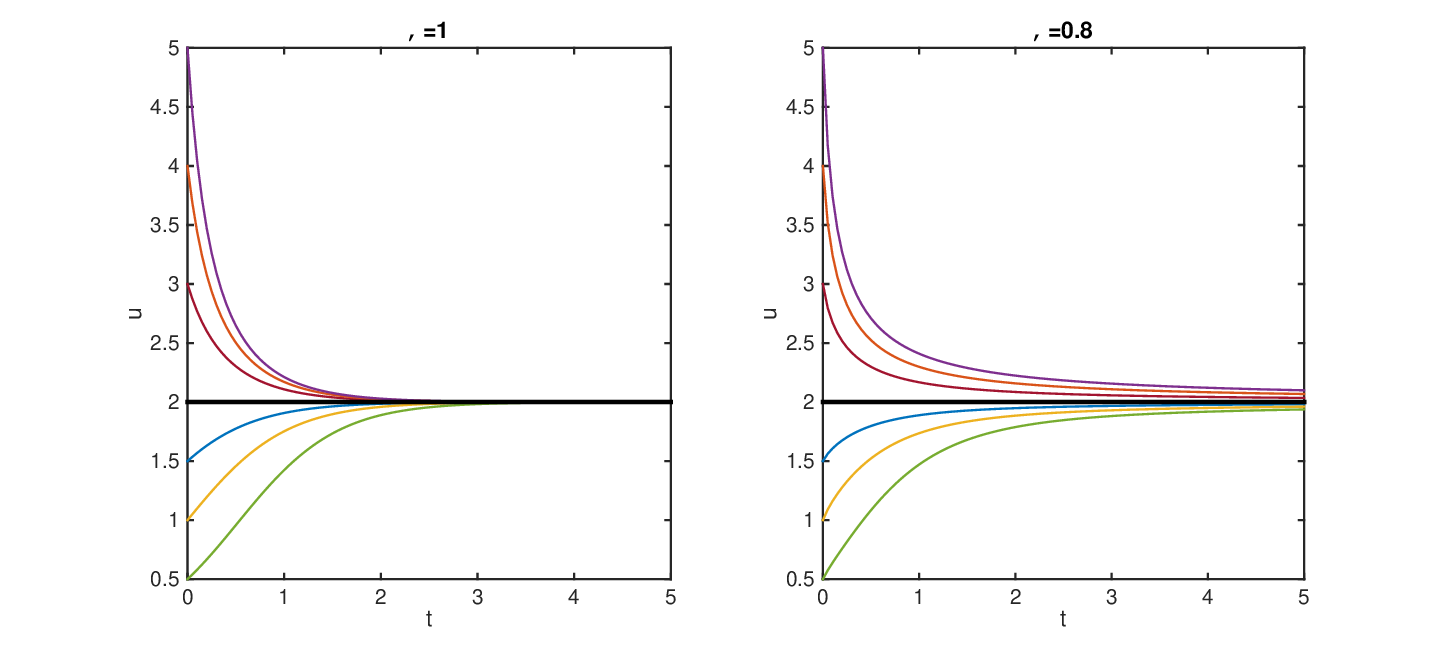}
 \end{array}$
\end{center}
\caption{Left: numerical solutions for $\alpha=1$ obtained by implicit Euler method; Right: numerical solutions for $\alpha=0.8$ obtained by Gr\"{u}nwald-Letnikov scheme. The initial values are taken as $0.5, 1, 1.5, 3, 4, 5$, respectively, and $h=0.05$, $T=5$ and $A=2, B=1$.}
\label{pic1}
\end{figure}

As pointed out in Remark \ref{rem:remark}, for general vector fractional ODEs in $\mathbb{R}^d$ with $d>1$, we can not expect the monotonicity of the Euclidean norm of the numerical solutions. Consider the fractional financial system \cite{petravs2011fractional}
\begin{equation*}%\label{eq52}
\begin{split}
\mathcal{D}_{c}^{\alpha}x(t)&=z(t)+(y(t)-1)x(t),\\
\mathcal{D}_{c}^{\alpha}y(t)&=1-0.1y(t)-x(t)^2,\\
\mathcal{D}_{c}^{\alpha}z(t)&=-x(t)-z(t).\\
\end{split}
\end{equation*}

\begin{figure}[!ht]
\begin{center}
$\begin{array}{cc}
 \includegraphics[scale=0.60]{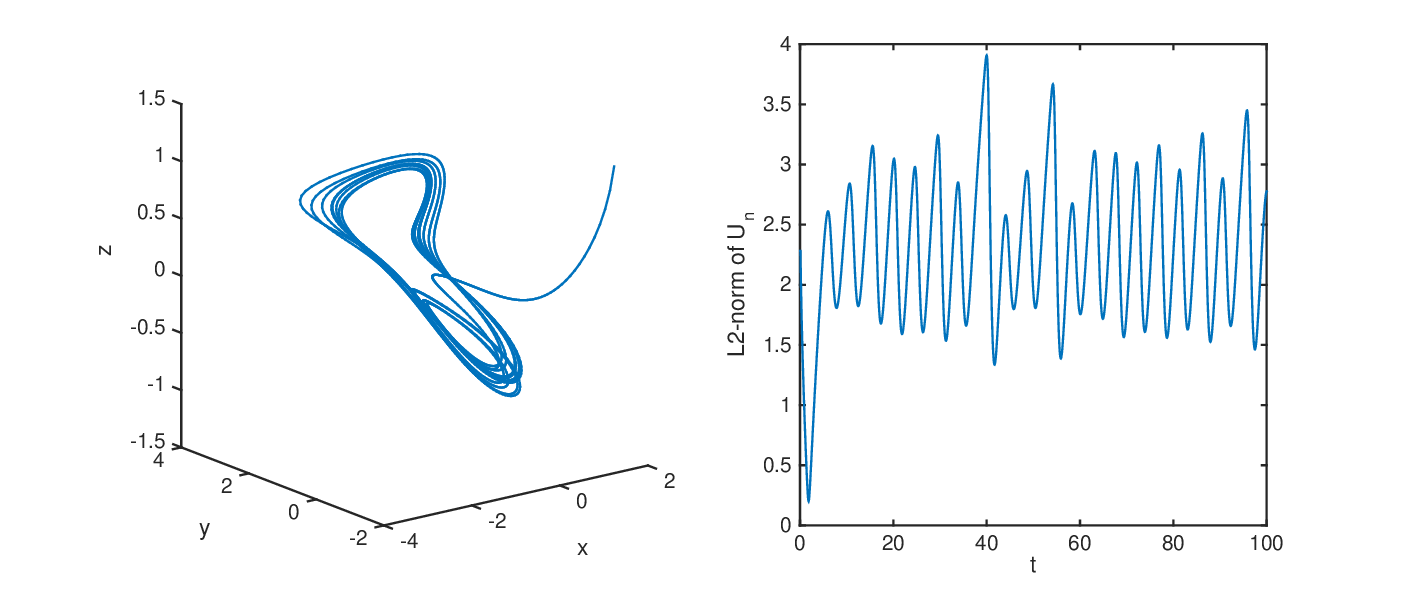}
 \end{array}$
\end{center}
\caption{Left: numerical solutions for $\alpha=0.9$ obtained by Gr\"{u}nwald-Letnikov scheme; Right: the $L2$-norm $\|U_{n}\|$, where $U=(x, y, z)^{T}$. The initial values $x_0=2, y_0=-1, z_0=1$, and $h=0.05$, $T=100$.}
\label{pic2}
\end{figure}

The fractional financial system is dissipative and there exists a bounded absorbing set \cite{wang2018long}. 
 Fig. (\ref{pic2}) shows that the solution doesn't tend to an equilibrium state, and of course $\|U_{n}\|$ doesn't have monotonicity, where $U=(x, y, z)^{T}$.
Numerical results obtained by other $\mathcal{CM}$-preserving schemes given in section \ref{sec:fourschemes} are very similarly,  and are not provided here.

\subsection{Volterra integral equations}
We study the monotonicity of numerical solutions for Volterra integral equation with $\mathcal{CM}$ kernel functions obtained by $\mathcal{CM}$-preserving schemes 
\begin{gather}\label{eq:exam2}
u(t)=u_{0}+\int_{0}^{t}k(t-s) f(u(s))ds,  \quad t>0,
\end{gather}
with initial value $u(0)=u_{0}$. Since the $\mathcal{CM}$ kernel $k_1(t)$ are very similar to the standard kernel $k_\alpha(t)$, we will focus on the kernel $k_{2}(t)=k_{\alpha}(t)e^{-\gamma t}$ for $\gamma>0$ in this example.  We consider the following three examples

(a) $f(u)=\lambda u$, $\lambda$ is a fixed parameter;

(b) $f(u)=Au-Bu^2$, where $A, B$ are parameters as in Example 1; 

(c) $f(u)=\sin(1+u^2)$.

In this example, we take the numerical schemes given in (ii) of \eqref{eq46} for the simulations for various initial values and parameters. 
The numerical results for scheme (i) of \eqref{eq46} are very similarly and not provided here.  
We take $h=0.1, T=10$ in all the following computations. 
The numerical solutions for (a) (b) and (c) are reported in Fig \eqref{pic3}, Fig \eqref{pic4} and Fig \eqref{pic5} respectively. 
The numerical results show that both the order $\alpha$ and parameter $\gamma$ will impact the decay rate and equilibrium state of the solutions significantly. 
But all numerical solutions for various initial values and parameters remain monotonic, as our theoretical results predicted.

\begin{figure}[!ht]
\begin{center}
$\begin{array}{cc}
 \includegraphics[scale=0.40]{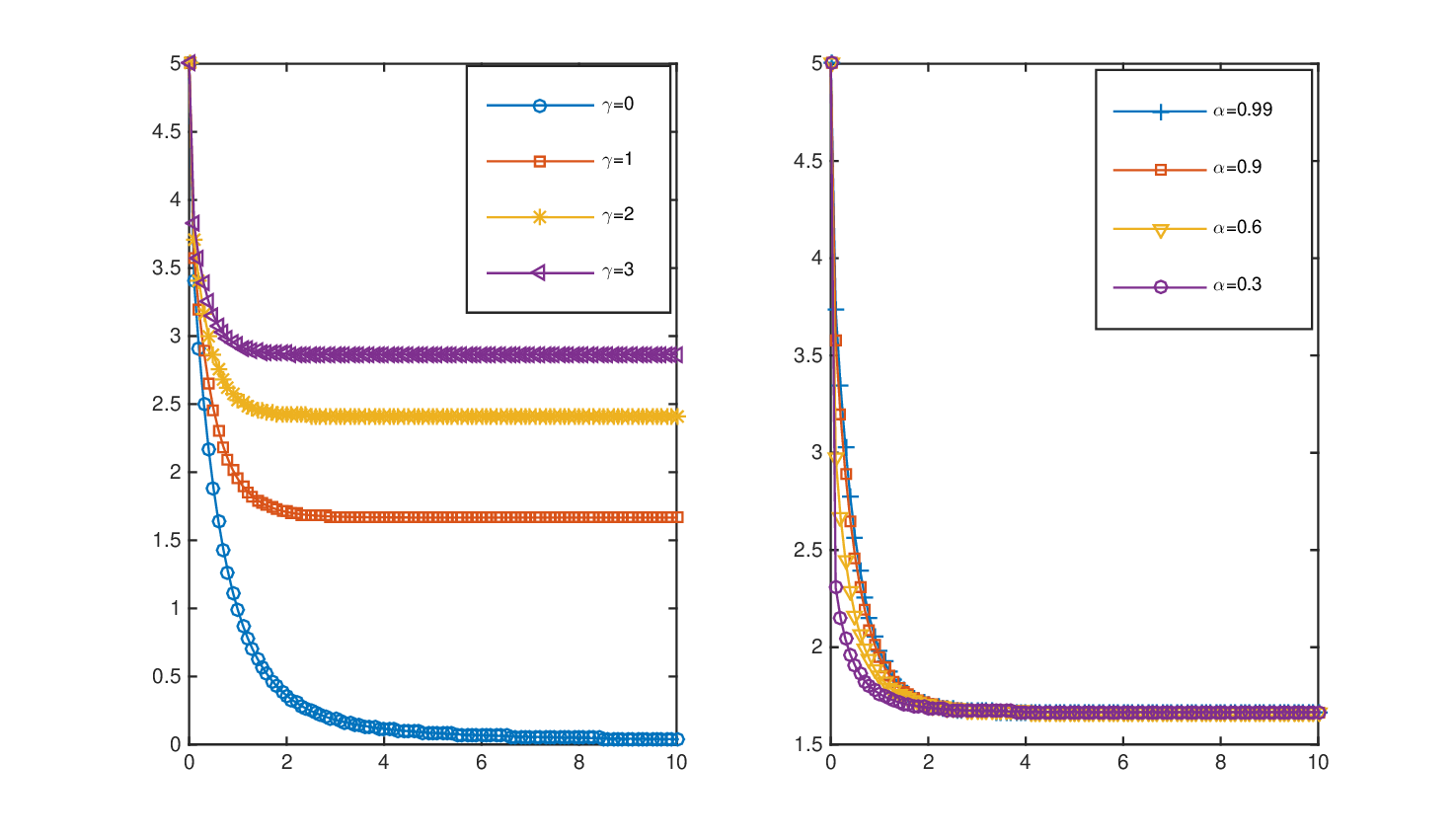}
 \end{array}$
\end{center}
\caption{Numerical solutions for (a) with $\lambda=-2$. Left: $\alpha=0.9$ and $\gamma=0, 1,2, 3$ respectively; Right: $\gamma=1$ and $\alpha=0.99, 0.9, 0.6, 0.3$ respectively.}
\label{pic3}
\end{figure}

\begin{figure}[!ht]
\begin{center}
$\begin{array}{cc}
 \includegraphics[scale=0.40]{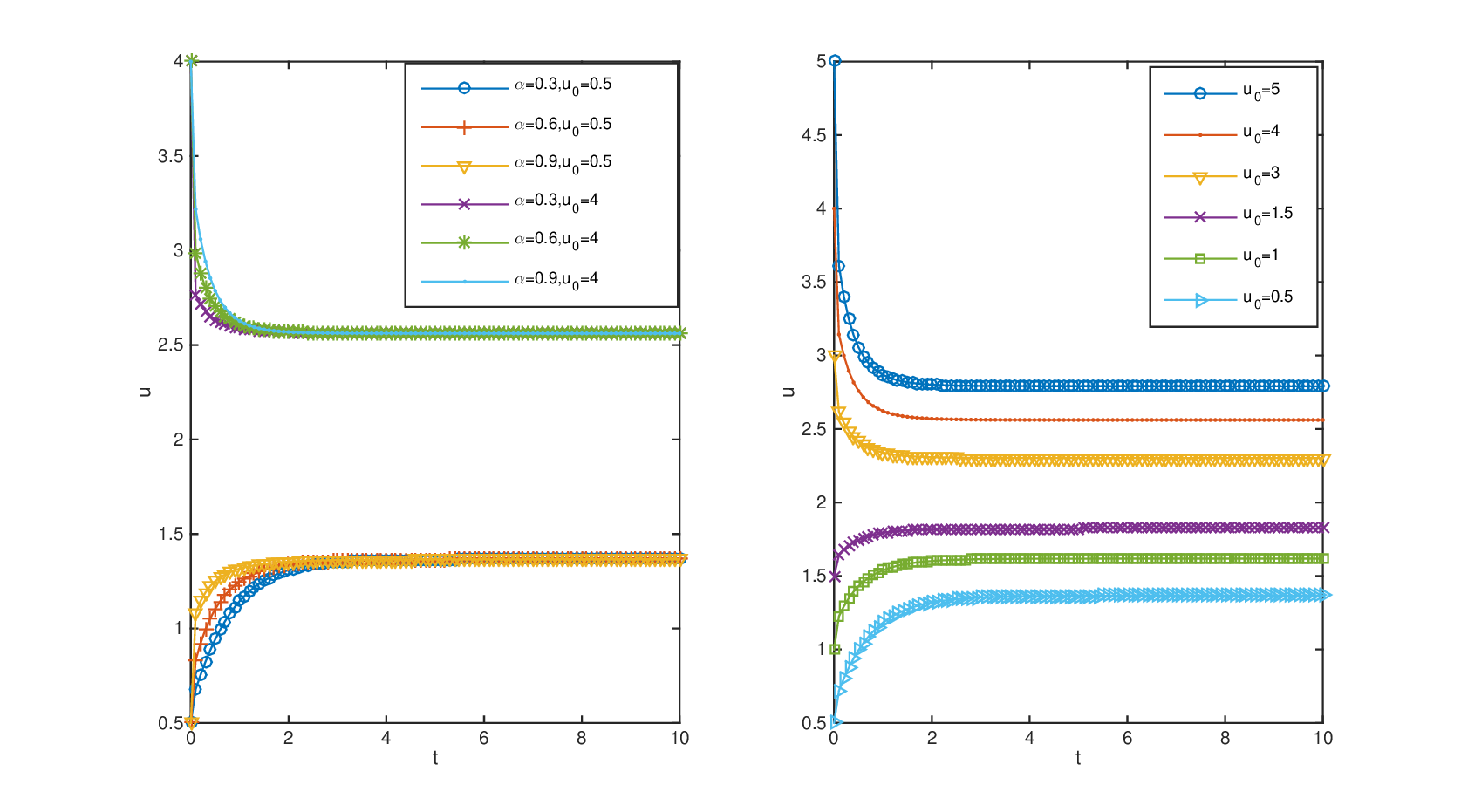}
 \end{array}$
\end{center}
\caption{Numerical solutions for (b) with $A=2, B=1$. Left: $\gamma=1$, $\alpha=0.9, 0.6, 0.3$ and $u_0=4, 0.5$ respectively; Right: $\gamma=1, \alpha=0.8$ and $u_0=0.5, 1, 1.5, 3,4,5$ respectively.}
\label{pic4}
\end{figure}

\begin{figure}[!ht]
\begin{center}
$\begin{array}{cc}
 \includegraphics[scale=0.40]{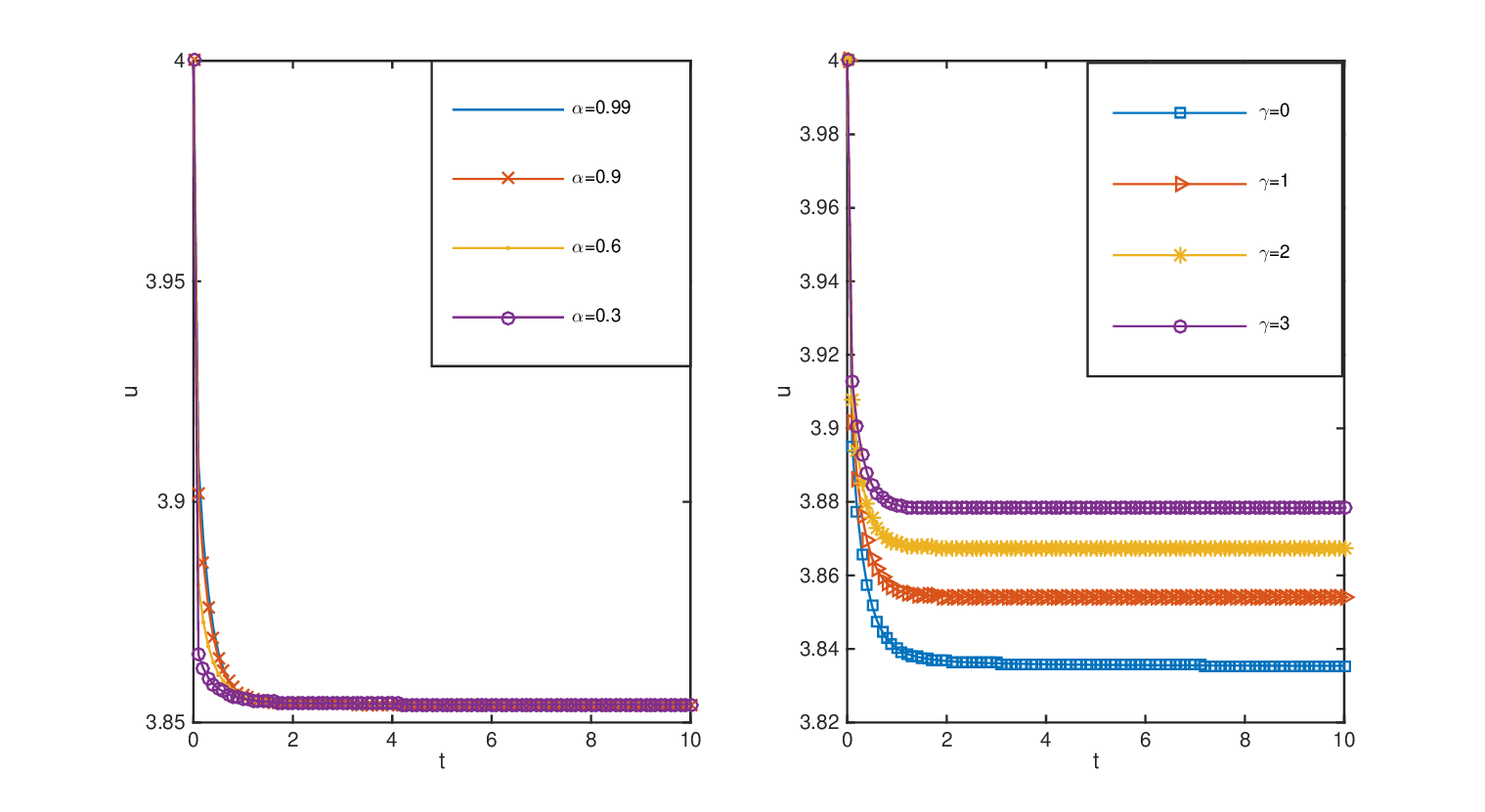}
 \end{array}$
\end{center}
\caption{Numerical solutions for (c). Left: $\alpha=0.99, 0.9,0.6, 0.3$ and $\gamma=1$ respectively; Right: $\gamma=0,1,2,3$ and $\alpha=0.9$ respectively.}
\label{pic5}
\end{figure}

\subsection{Application to fractional advection-diffusion equations}
\label{sec:advdiffu}
Consider the time fractional periodic advection diffusion problem 
\begin{equation}\label{eq:adiffu}
\begin{split}
_{~0}\mathcal{D}_{t}^{\alpha}u(x, t)+d u_{x}=Du_{xx}, ~t>0, x\in\Omega,\\
\end{split}
\end{equation}
with initial value $u(x, 0)=u_{0}(x)$ and Dirichlet or periodic boundary condition, where constant coefficients $d\in\mathbb{R}, D> 0$ and $\Omega\subset \mathbb{R}^{n} (n=1, 2, 3)$. 

When $d=0$, % \footnote{\tcr{what is $c$? Should be $d$?}},  
the equation \eqref{eq:adiffu} is reduced to the sub-diffusion equation, 
which has been thoroughly studied both mathematically and numerically in recent years. 
If $u_{0}(x)\in L^{2}(\Omega)$ and $u(x, t)=0$ for $x\in\partial\Omega$,
then it is proved in \cite{sakamoto2011initial} that the equation there exits a unique weak solution 
$u\in C([0, \infty]; L^2(\Omega))\cap C((0, \infty]; H^2(\Omega)\cap H^{1}_{0}(\Omega))$ and there exists a constant $C_{\alpha}>0$ such that 
\begin{equation} \label{eq:decay}
\begin{split}
\|u(\cdot , t)\|_{L^2(\Omega)}\leq \frac{C_{\alpha}}{1+\lambda t^{\alpha}} \|u_{0}\|_{L^2(\Omega)}, ~~\lambda>0, t>0.
\end{split}
\end{equation}

As we have pointed out earlier in Section \ref{sec:Introd}, the fractional sub-diffusion equations have two significant differences compared to the classical diffusion equations for $\alpha=1$. 
The first one is that the solution of model (\ref{eq:adiffu}) often exhibits weak singularity near $t=0$, i.e., 
$\|_{~0}\mathcal{D}_{t}^{\alpha} u(\cdot , t)\|_{L^2(\Omega)}\leq C_{\alpha} t^{-\alpha} \|u_{0}\|_{L^2(\Omega)}$ \cite{sakamoto2011initial}. 
In fact, this limited regularity makes it difficult to develop high-order robust numerical schemes and provide a rigorous convergence analysis on $[0, T]$ for some $T>0$.
Many efforts have been put on this problem and for the linear problems this problem has been well solved. 
Several effective high-order corrected robust numerical methods have been constructed and analyzed \cite{jin2019numerical,yan2018analysis,liao2019discrete,stynes2017error,kopteva2019error}.

The other one, which can be clearly seen from (\ref{eq:decay}), is the long time polynomial decay rate of the solutions,
 i.e., $\|u(\cdot , t)\|_{L^2(\Omega)}=O( t^{-\alpha})$ as $t\to +\infty$. 
 This is essentially different from the exponential decay of the solutions to a classical first order diffusion equations. 
 However, as far as we know, there is little work on studying the polynomial rate of the solutions and 
 characterizing their long tail effect for fractional sub-diffusion equations from the numerical point of view.
 In our recent work \cite{wang2018long}, we established the long time polynomial decay rate of the numerical solutions for a class of fractional ODEs
 by introducing new auxiliary tools and energy methods, which can also be used to characterize the numerical long time behavior of spatial semi-discrete PDEs as in (\ref{eq:adiffu}).

When $d=0$, the eigenvalues of fractional ODEs system obtained from space semi-discretization for fractional sub-diffusion equations are often negative real constants. Therefore, any time discrete numerical methods that contain the entire negative real half axis $(-\infty, 0]$ will lead to unconditionally stable schemes.

When $d\neq 0$, the corresponding eigenvalues of fractional ODEs system obtained from space semi-discretization have the form $\lambda_j=x_j+iy_j$, where $x_j, y_j$ are real constants and $x_j<0$. However, the constants $y_j$ are not zeros in general. 
In this case, if we still want to obtain an unconditionally stable numerical scheme in time direction, then the stable region of this scheme must contain the whole negative semi-complex plane $\mathbb{C}^{-}$. According to our results in this paper, the $\mathcal{CM}$-preserving schemes meet this stability requirement. %and in particular, so does $\mathcal{L}$1 scheme.

As an example, we consider the one dimension fractional advection diffusion equation (\ref{eq:adiffu}) on $\Omega=[0, 1]$ with periodic boundary condition $u(0, t)=u(1, t)$.
For the space discretization on a uniform grid $\{x_1, x_2, ...,x_N\}$ with grid points $x_j = j\Delta x$ and mesh width $\Delta x = 1/N$, 
we use second-order central differences for the advection and diffusion terms. 
We obtain the semi-discrete system
\begin{equation}\label{eq:semis}
\begin{split}
_{~0}\mathcal{D}_{t}^{\alpha}u_{j}(t)+d \frac{u_{j+1}-u_{j-1}}{2\Delta x}=D \frac{u_{j+1}-2u_{j}+u_{j-1}}{{\Delta x}^2} , ~j=1,2,...,N,\\
\end{split}
\end{equation}
where $u_{0}=u_{N}, u_{N+1}=u_{1}$. For $\alpha=1$, this example has been carefully analyzed in \cite{verwer2004rkc, zbinden2011partitioned} and the corresponding eigenvalues can be obtained by standard Fourier analysis, which are given by  
\begin{equation}\label{eq:eigenv}
\begin{split}
\lambda_{j}^{\alpha}=\frac{2D}{{\Delta x}^2}(\cos(2\pi j\Delta x)-1)-i \frac{d}{{\Delta x}} \sin(2\pi j \Delta x), ~j=1,2,...,N.\\
\end{split}
\end{equation}
We can see those eigenvalues are located on the ellipse  in the left half plane $\mathbb{C}^-$: 
%\[
$\frac{ \left( x+\frac{2D}{\Delta x^2} \right)^2}{  \left(\frac{2D}{{\Delta x}^2} \right)^2}+ \frac{ y^2}{  \left(-\frac{d}{\Delta x} \right)^2}=1,$
%\]
which is centered at $\left(-\frac{2D}{\Delta x^2}, 0 \right)$ with two radii $\frac{2D} {{\Delta x}^2}$ and $\frac{d}{\Delta x}$, respectively. 
%Note that when the advection is dominate, for example if $d>\frac{2D} {\Delta x}$, the eigenvalues will be possible outside the domain  
%$S(3\pi/4):= \left\{\zeta\in\mathbb{C}: \zeta\neq 0, |\arg(\zeta)|>3\pi/4 \right\}$. Hence, the $A(\pi/4)$ stability for $\mathcal{L}$1 scheme 
%given in \cite{jin2015analysis} is not enough to guarantee the numerical stability. 
%However, the improved 
The stability results obtained in this paper show that any $\mathcal{CM}$-preserving schemes is $A(\pi/2)$-stable, so it can be used to solve the advection-diffusion fractional ODE \eqref{eq:semis}. 
\begin{figure}[!ht]
\begin{center}
$\begin{array}{cc}
 \includegraphics[scale=0.35]{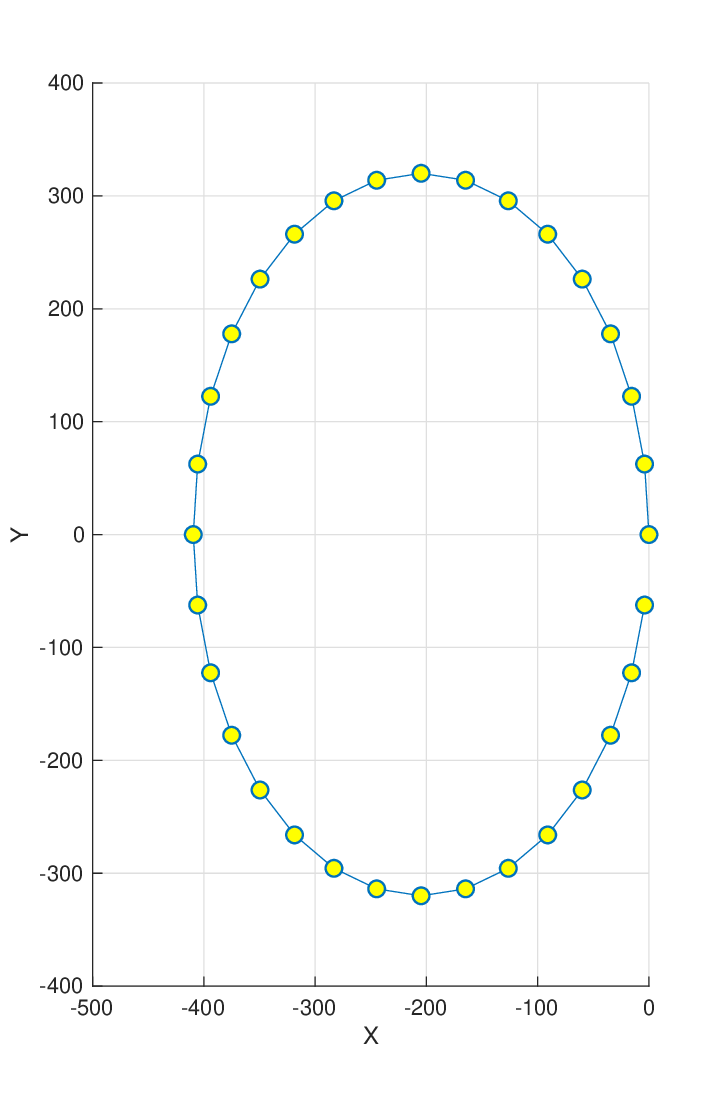}~~~~~
 \includegraphics[scale=0.65]{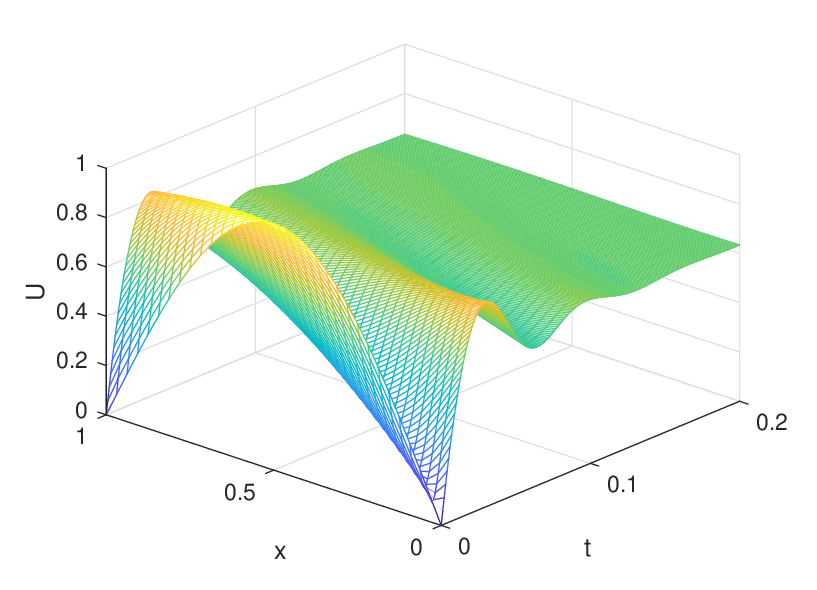}
 \end{array}$
\end{center}
\caption{The eigenvalues distributions in \eqref{eq:eigenv} and the numerical solutions for the semi-discrete system \eqref{eq:semis} with $d=10, D=0.1, \Delta x=1/32$.}
\label{Piceigen}
\end{figure}

As that in \cite{zbinden2011partitioned}, let the initial value  $U(0)\in\mathbb{R}^N$ for the semi-discrete fractional ODEs in \eqref{eq:semis} be 
\[
U(0)=\sum_{k=1}^{N}z_{k}\phi_{k}~ \hbox{with}~ z_k=\frac{1}{N} \sum_{k=1}^{N}u_0(x_j) \left( \overline{\phi_{k}} \right)_{j},
\]
where $\phi_{k}=\left( e^{2\pi i kx_{1}}, e^{2\pi i kx_{2}},...,e^{2\pi i kx_{N}} \right)^T \in\mathbb{C}^N$ stands for the discrete Fourier modes for $k=1,2,...,N$
and $U(t)=\left( u_{1}(t), u_{2}(t),...,u_{N}(t) \right)^T$ denotes the solution vector. Then the solution is given by 
\[
U(t)=\sum_{k=1}^{N}z_{k} E_{\alpha}(\lambda_{k} t^{\alpha})\phi_{k},
\]
where $E_{\alpha}(z)=\sum_{k=0}^{\infty} \frac{z^k}{\Gamma(k\alpha+1)}$ is the Mittag-Leffler function. 

In Figure \ref{Piceigen}, we plot the eigenvalues distributions and the corresponding numerical solutions obtained by $\mathcal{L}$1 scheme, which shows good numerical stability as long as the stable region is contained in the left half complex plane. Other $\mathcal{CM}$-preserving schemes give similar numerical performances and they are not provided here. 
Although the stable regions for some $\mathcal{CM}$-preserving schemes have been proved in other ways, we emphasize here that we can provide a unified framework to prove that they are all $A(\pi/2)$-stable and thus can be used for the time fractional advection-diffusion equations.

\section*{Acknowledgements} 
The work of L. Li was partially sponsored by NSFC 11901389, Shanghai Sailing Program 19YF1421300 and NSFC 11971314. 
The work of D. L. Wang was partially sponsored by NSFC 11871057, 11931013 and Project for Young Science and Technology Star of Shaanxi Province in China (2018 KJXX-070).

\appendix

\section{Proof of Proposition \ref{pro:comppf}}\label{app:compproof}

\begin{proof}[Proof of Propsosition \ref{pro:comppf}]

(1). Define the sequence $\xi=(\xi_n)$ by $\xi_n:=u_n-v_n$. Then, by the 
linearity of $\mathcal{D}_h^{\alpha}$,
\[
(\mathcal{D}_h^{\alpha}\xi)_n \le f(t_n, u_n)-f(t_n, v_n).
\]
where $( \cdot)_n$ stands for the $n$-th entry of the sequence.
Multiplying the indicator function $\chi_{(\xi_n\ge 0)}$ (i.e. the value is $1$ if $\xi_n\ge 0$ while the value is $0$ otherwise) on both sides of the inequality yields
\begin{gather*}
\begin{split}
&h^{-\alpha}\left(\omega_0 \xi_n \chi_{(\xi_n\ge 0)}
+\sum_{i=1}^{n-1}\omega_i \xi_{n-i}\chi_{(\xi_n\ge 0)}
-\left(\omega_0+\sum_{i=1}^{n-1}\omega_i \right)\xi_0 \chi_{(\xi_n\ge 0)} \right) \\
&\le [ f(t_n, u_n)-f(t_n, v_n)] \chi_{(\xi_n\ge 0)} \le 0.
\end{split}
\end{gather*}

We define $\eta_n=\xi_n \vee 0=\max(\xi_n, 0)$, i.e. the maximum between $\xi_n$ and $0$. Then, $\xi_n \chi_{(\xi_n\ge 0)}=\xi_n \vee 0=\eta_n$, 
$\xi_i \chi_{(\xi_n\ge 0)} \le \xi_i \vee 0 =\eta_i$ for any $i\neq n$.
Since $\omega_i\le 0$ and $-(\omega_0+\sum_{i=1}^n\omega_i)\le 0$, we then have
\begin{gather*}
\begin{split}
&\omega_0 \eta_n+\sum_{i=1}^n\omega_i \eta_{n-i}- \left(\omega_0+\sum_{i=1}^n\omega_i \right)\eta_0 \\
&\le \omega_0 \xi_n \chi_{(\xi_n\ge 0)}+\sum_{i=1}^n\omega_i \xi_{n-i}\chi_{(\xi_n\ge 0)}-\left(\omega_0+\sum_{i=1}^n\omega_i \right)\xi_0 \chi_{(\xi_n\ge 0)}.
\end{split}
\end{gather*}
Hence,
\[
(\mathcal{D}_h^{\alpha} \eta)_n \le 0.
\]
Clearly, $\eta_0= 0$, and by induction, it is easy to see $\eta_n\le 0$. This 
means $\eta_n=0$ and thus $\xi_n \le 0$. Similar argument applies
to $v_n$ and $w_n$, so we omit the details.

(2). The proof can be done by induction. 
We only compare $u$ with $v$. Comparing $v$ with $w$ is similar.

The condition gives $u_0\le v_0$. Suppose that for $n\ge 1$ we have shown $u_m \le v_m$ for all $m\le n-1$. We now prove $u_n\le v_n$.
Using again $\omega_0>0$, $\omega_i\le 0$ and $-(\omega_0+\sum_{i=1}^n\omega_i)\le 0$, we have
\[
h^{-\alpha}\omega_0(u_n-v_n)\le \mathcal{D}_h^{\alpha}(u-v))_n
\le f(t_n, u_n)-f(t_n, v_n)\le L|u_n-v_n|.
\]
Hence,
%\[
$u_n-v_n\le a_0 L h^{\alpha}|u_n-v_n|.$
%\]
If $a_0 L h^{\alpha}<1$, we must have $u_n-v_n\le 0$.

(3). The proof is similar as (2) by induction. One can in fact obtain $u_n-v_n\le a_0 L h^{\alpha}|u_n-v_n|$ using induction hypothesis. The argument is similar.

\end{proof}

\bibliographystyle{alpha}
\bibliography{fracgradient}

\end{document}